\newtheorem{theorem}{Theorem}[section]
\newtheorem{proposition}[theorem]{Proposition}
\newtheorem{definition}[theorem]{Definition}
\newtheorem{notation}[theorem]{Notation}
\newtheorem{example}[theorem]{Example}
\newtheorem{remark}[theorem]{Remark}
\newcommand{\skipit}[1]{{}}
\newcommand{\prfend}{\hbox to7pt{\hfil}
\par\vskip-\baselineskip\hbox to\hsize
{\hfil\vbox {\hrule width6pt height6pt}}\vskip\baselineskip}
\newcommand{\ZZ}{\mathbb{Z}}
\newcommand{\RR}{\mathbb{R}}
\newcommand{\NN}{\mathbb{N}}
\newcommand{\Q}{\mathbb{Q}}
\newcommand {\PP}{\mathbb{P}}
\newcommand{\cM}{\mathcal{M}}
\newcommand{\kk}{\mathbb{K}}
\newcommand{\cA}{\mathcal{A}}
\newcommand{\cH}{\mathcal{H}}
\newcommand{\cV}{X}
\newcommand{\cS}{\mathcal{S}}
\DeclareMathOperator{\reg}{reg}
\newcommand{\bG}{\overline{G}}
\DeclareMathOperator{\GL}{GL}
\DeclareMathOperator{\diag}{diag}
\DeclareMathOperator{\GCD}{GCD}
\DeclareMathOperator{\relint}{relint}
\DeclareMathOperator{\I}{I}
\DeclareMathOperator{\HS}{HS}
\DeclareMathOperator{\HF}{HF}
\DeclareMathOperator{\HP}{HP}
\DeclareMathOperator{\vol}{vol}
\DeclareMathOperator{\codim}{codim}
\DeclareMathOperator{\rk}{rk}
\def\HF{{\operatorname{H\!F}}}
\def\HP{\operatorname{H\!P}}
\def\rank{\operatorname{rank}}
\DeclareMathOperator{\conv}{conv}
\DeclareMathOperator{\rl}{rl}
\DeclareMathOperator{\proj}{Proj}
\newcommand{\myarrow}[2]{\hbox to #1pt{\hfil$\to$\hfil}{\hskip-#1pt{\raise
10pt\hbox to#1pt{\hfil$\scriptscriptstyle #2$\hfil}}}}
\begin{document}

\title[Sumsets and Veronese varieties]
{Sumsets and Veronese varieties}
 \author[L. Colarte-G\'omez]{Liena Colarte-G\'omez} 
 \address{Facultat de
  Matem\`atiques i Inform\`atica, Universitat de Barcelona, Gran Via des les
  Corts Catalanes 585, 08007 Barcelona, Spain} \email{liena.colarte@ub.edu}
\author[J. Elias]{Joan Elias} 
\address{Facultat de
  Matem\`atiques i Inform\`atica, Universitat de Barcelona, Gran Via des les
  Corts Catalanes 585, 08007 Barcelona, Spain} \email{elias@ub.edu}
  \author[R.\ M.\ Mir\'o-Roig]{Rosa M.\ Mir\'o-Roig} 
  \address{Facultat de
  Matem\`atiques i Inform\`atica, Universitat de Barcelona, Gran Via des les
  Corts Catalanes 585, 08007 Barcelona, Spain} \email{miro@ub.edu}

\thanks{The first and third author have been  partially supported by the grant PID2020-113674GB-I00} 
\thanks{The second author has been partially supported by the grant PID2019-104844GB-I00}

\date{\today}

\begin{abstract} In this paper, to any subset $\cA \subset \ZZ^{n}$ we explicitly associate a unique monomial  projection $Y_{n,d_{\cA}}$ of a Veronese variety, whose Hilbert function coincides with the cardinality of the $t$-fold sumsets $t\cA$. This link allows us to tackle the classical problem of determining the polynomial $p_{\cA} \in \Q[t]$ such that $|t\cA| = p_{\cA}(t)$ for all $t \geq t_0$ and the minimum integer $n_0(\cA) \leq t_0$ for which this condition is satisfied, i.e. the so-called {\em phase transition} of $|t\cA|$. 
We use the Castelnuovo--Mumford regularity and the geometry of $Y_{n,d_{\cA}}$ to describe the polynomial $p_{\cA}(t)$ and to derive new bounds for $n_0(\cA)$ under some technical assumptions on the convex hull of $\cA$; and vice versa we apply the theory of sumsets to obtain geometric information of the varieties $Y_{n,d_{\cA}}$. 
\end{abstract}
\maketitle
\section{Introduction}
In additive number theory, a {\em $t$-fold sumset} $t\cA \subset \ZZ^{n}$ is the set of sums of $t$ non necessarily different elements of a finite non empty subset $\cA \subset \ZZ^{n}$. A classical problem concerning sumsets consists of determining the behaviour of the cardinality function $\varphi_{\cA}(t) = |t\cA|$ as $t$ grows. A central result of Khovanskii \cite{Khovanskii} shows that there exists a polynomial $p_{\cA}(t) \in \Q[t]$, of degree at most $n$,  such that $\varphi_{\cA}(t) = p_{\cA}(t)$ for $t$ sufficiently large. He also determines the leading coefficient of $p_{\cA}(t)$ in terms of the volume of the convex hull associated to the subset $\cA \subset \ZZ^{n}$ and the index of the additive subgroup in $\ZZ^{n}$ generated by the difference set $\cA - \cA \subset \ZZ^{n}$. Notwithstanding, Khovanskii's result 
sheds no light on the polynomial $p_{\cA}(t)$, excepting the leading coefficient, nor the phase transition of $\varphi_{\cA}(t)$.
In view of these considerations, many contributions have been made to these topics as one can see, for instance, in \cite{Wu-Chen-Chen, Curran-Goldmakher, Elias,  Granville-Walker, Granville-Shakan, Granville-Shakan-Walker, Lee, Nathanson, Nathanson1, Nathanson-Ruzsa}.

Most of the results regarding the polynomial $p_{\cA}(t)$ and the phase transition $n_0(\cA)$ are based on the study of the structure of $t$-fold sumsets $t\cA$ as sets of lattice points, being particularly useful for finite subsets of integers. The case $\cA \subset \ZZ$ can be reduced to finite subsets $\cA = \{0,a_1,\hdots, a_k\} \subset \ZZ$ with $0 < a_1 < \cdots < a_k$ and $\GCD(a_1,\hdots,a_k) = 1$. The structure of the $t-$fold sumset $t\cA$ is completely determined for $t$ sufficiently large and it produces upper bounds for $n_0(\cA)$ (see, for instance,  \cite{Wu-Chen-Chen, Elias, Granville-Walker, Nathanson1}). On the other hand, Khovanskii's result assures that $p_{\cA}(t) = a_kt + b \in \ZZ[t]$. However, when dealing with subsets $\cA \subset \ZZ^{n}$ in arbitrary dimension $n \geq 2$, the structure of $t$-fold sumsets $t\cA$ is considerably more complex (see \cite{Curran-Goldmakher, Granville-Shakan, Granville-Shakan-Walker}), and it becomes less effective to determine $p_{\cA}(t)$ or to produce, in general, a suitable bound for $n_0(\cA)$. That being said, the authors of \cite{Curran-Goldmakher} give a complete solution for subsets $\cA \subset \ZZ^{n}$ of $n+2$ elements and such that the different set $\cA-\cA$ generates $\ZZ^{n}$. The structure of the $t-$fold sumsets $t\cA$ has played a significant role on establishing bounds for $n_{0}(\cA)$ when the convex hull of the subset $\cA \subset \ZZ^{n}$ is an $n-$simplex. Overall, the polynomial $p_{\cA}(t)$ and its coefficients remain less understood. 

In this paper, we stress the link between sumsets and the geometry of projective varieties, outlined for instance in \cite{Elias, Jelinek-Klazar, Miller-Sturmfels}. Our aim is twofold. First, to any finite subset $\cA \subset \ZZ^{n}$, we explicitly associate a unique projective toric variety $Y_{n,d_{\cA}}$ in $\PP^{|\cA|-1}$ whose Hilbert function $\HF_{Y_{n,d_{\cA}}}(t)$ coincides with the cardinality function $\varphi_{\cA}(t)$ for any $t \geq 0$. The variety $Y_{n,d_{\cA}}$ turns out to be a {\em monomial projection of a Veronese variety} (see Section \ref{Preliminaries}). This identification and the geometric knowledge of monomial projections of Veronese varieties allows us to go ahead with the study of the polynomial $p_{\cA}(t)$ and to provide bounds for the phase transition $n_0(\cA)$ of subsets $\cA \subset \ZZ^{n}$ in arbitrary dimension $n \geq 2$. Second, we make use of the theory of sumsets to derive new geometric information about the varieties $Y_{n,d_{\cA}}$ and, in particular of the so called $RL-$variety (see Section \ref{Section: GT-sumsets and RL-varieties.}). This relationship between additive number theory and projective geometry will allow us to go back and forth and to use algebraic and geometric results on monomial projections of Veronese varieties to recover and improve results on additive combinatorics and vice versa.

Since the cardinality of a $t$-fold sumset is invariant under translations, we can assume that $\cA \subset \ZZ_{\geq 0}^{n}$. We set $d_{\cA} := \min\{d \in \ZZ_{\geq 0} \mid \sum_{i=1}^{n} a_{i} \leq d, \, \forall a = (a_1,\hdots,a_n) \in \cA\}$ and we define:
\[\Omega_{n,d_{\cA}} := \{x_0^{d_{\cA}-a_1-\cdots-a_n}x_1^{a_1}\cdots x_n^{a_n} \mid a = (a_1,\hdots,a_n) \in \cA\} = \{m_1,\hdots,m_{|\cA|}\},\] 
a set of monomials of degree $d_{\cA}$ in $R = \kk[x_0,\hdots,x_n]$. Moreover, applying a suitable translation, we can also assume that $\GCD(m_1,\hdots,m_{|\cA|}) = 1$. We denote by $Y_{n,d_{\cA}}$ the variety image of the rational map $\PP^{n} \dashrightarrow \PP^{|\cA|-1}$ defined by the parameterization $(m_1: \hdots: m_{|\cA|})$. We establish that the homogeneous coordinate ring $A(Y_{n,d_{\cA}})$ of $Y_{n,d_{\cA}}$ is isomorphic to the semigroup ring of monomials $\kk[\Omega_{n,d_{\cA}}]$, which in turn gives a description of $A(Y_{n,d_{\cA}})$ and the homogeneous ideal $\I(Y_{n,d_{\cA}})$ of $Y_{n,d_{\cA}}$ in terms of $\cA$ (Section \ref{Section: Additive number theory and projective geometry}). 
As a result of these facts, we deduce that $\varphi_{\cA}(t) = \HF_{Y_{n,d_{\cA}}}(t)$ for any $t \geq 0$ (Proposition \ref{BasicHF}). Consequently, $\varphi_{\cA}(t)$ is a polynomial in $\Q[t]$ for $t$ sufficiently large and, hence, $p_{\cA}(t)$ is the Hilbert polynomial of the monomial projection $Y_{n,\cA}$.  Both facts provide further information on $\varphi_{\cA}(t)$ and $p_{\cA}(t)$ ((\ref{Hilbert function Macaulay}) and (\ref{Gotzmann expansion})). For instance, a new geometric description of the leading coefficient of $p_{\cA}(t)$ (Proposition \ref{Proposition: geometric interpretation degree}) and a complete solution for any subset $\cA$ of $n+1$ and $n+2$ elements, respectively, associated to an $n-$dimensional monomial projection $Y_{n,d_{\cA}}$ (Proposition \ref{Proposition: cases n+1 and n+2}), recovering the corresponding results in \cite{Curran-Goldmakher}. 
In this setting, $n_0(\cA)$ translates into the {\em regularity} of the Hilbert function $\HF_{Y_{n,d_{\cA}}}(t) = \varphi_{\cA}(t)$, thus the Castelnuovo--Mumford regularity $\reg(Y_{n,d_{\cA}})$ of $A(Y_{n,d_{\cA}})$, which we denote by $\reg(\cA)$, bounds $n_{0}(\cA) \leq \reg(\cA)+1$ (see \cite{BH93} for the definition and basic properties of the Castelnuovo-Mumford regularity). Taking advantage of this fact and the results of \cite{Herzog-Hibi, Hoa-Stuckrad}, we provide new estimations for $n_{0}(\cA)$ when the convex hull of $\cA$ is an $n-$simplex with vertexes $0,(d_{\cA},0\hdots,0), \hdots, (0,\hdots,0,d_{\cA})$, or equivalently when $\kk[\Omega_{n,d_{\cA}}]$ is the semigroup ring of a simplicial affine semigroup (Theorems \ref{Theorem: Bounds reg Hoa-Stuckrad}, \ref{Theorem: Bounds reg Hoa-Stuckrad-continuacio} and \ref{Theorem: Bounds reg Hoa-Stuckrad-reduction number}). 

The arguments developed so far do not consider any further particularity of the subset $\cA$. In this sense, the bounds established for the phase transition $n_0(\cA)$ are often very far from the actual value of $n_0(\cA)$. Motivated by this fact, in the last part of this paper we introduce $GT-$subsets and $GT-$sumsets (Definition \ref{Definition: GT-subset}). They form a family of $t$-fold sumsets uniquely determined by a linear system of congruences. The algebraic structure of $GT-$sumsets $t\cA$  as solutions of a linear system of congruences allows to significantly improve the bound for $n_0(\cA)$ (Proposition \ref{Proposition: Phase transition GT-subsets}). In addition, they provide a good field to delve into the polynomial $p_{\cA}(t)$, which is completely determined in several interesting cases (Propositions \ref{Proposition: polynomial GT-surfaces} and \ref{Proposition: polynomial d prime}). Finally, we apply this new approach and recent results on sumsets to compute the degree and the Hilbert polynomial of $RL-$varieties (Proposition \ref{Proposition: degree RL-variety}), a family of rational smooth projective varieties introduced in \cite{ThesisLiena} and intrinsically related to $GT-$subsets whose geometry is barely known. 

\medskip
The content of this paper is organized as follows. In Section \ref{Preliminaries}, we gather the basic definitions and notations needed in the body of this article. Given a finite subset $\cA \subset \ZZ^{n}$, we introduce the notion of {\em $t$-fold sumsets} $t\cA \subset \ZZ^{n}$ and the cardinality function $\varphi_{\cA}(t) = |t\cA|$. We recall Khovanskii's result on the polynomial growth of $\varphi_{\cA}(t)$, i.e. the existence of a polynomial $p_{\cA}(t) \in \Q[t]$ such that $\varphi_{\cA}(t) = p_{\cA}(t)$ for $t$ sufficiently large, along to the so called {\em phase transition} $n_0(\cA)$ of $\varphi_{\cA}$ and illustrating examples. Afterwards, we introduce Veronese varieties $X_{n,d} \subset \PP^{\binom{n+d}{n}-1}$ and {\em monomial projections} $Y_{n,d}$ of $X_{n,d}$, which play a central role through this work. 

In Section \ref{Section: Additive number theory and projective geometry}, to any subset $\cA \subset \ZZ^{n}$, we explicitly associate a monomial projection $Y_{n,d_{\cA}}$ of the Veronese variety $X_{n,d_{\cA}}$ whose Hilbert function agrees with the cardinality function $\varphi_{\cA}(t)$. This link allows us, on one hand, to describe the homogeneous coordinate ring $A(Y_{n,d_{\cA}})$ and the homogeneous ideal $\I(Y_{n,d_{\cA}})$ of $Y_{n,d_{\cA}}$ in terms of the $t$-fold sumsets $t\cA$ and, on the other hand, to interpret the function $\varphi_{\cA}(t)$ and the polynomial $p_{\cA}(t)$ in terms of the variety $Y_{n,d_{\cA}}$ (Proposition \ref{BasicHF}). Through this identification, we obtain further information on the general structure of $p_{\cA}(t)$  and we provide a combinatorial and a geometric formula for the leading coefficient of $p_{\cA}(t)$ (Proposition \ref{Proposition: geometric interpretation degree} and (\ref{Equation: degree from lattice ideal})). Using purely geometric techniques, we determine $p_{\cA}(t)$ and $n_0(\cA)$ when $\cA$ contains $n+1$ and $n+2$ elements and $Y_{n,d_{\cA}}$ is an $n-$dimensional  projective variety, recovering some of the results in \cite{Curran-Goldmakher}. 

In Section \ref{sec-phase}, we focus our attention on subsets $\cA \subset \ZZ^{n}$ whose associated convex hull $\conv(\cA)$ is an $n-$simplex (Definition \ref{Definition: A with n-simplex}) and we gather the bounds known so far for $n_0(\cA)$ in this case (Theorem \ref{Theorem: effectives khovanskii} and Proposition \ref{Proposition: boudn K(A,B)}). 
Using the Castelnuovo--Mumford regularity $\reg(\cA)$ of $A(Y_{n,d_{\cA}})$, we provide improved bounds for $n_0(\cA)$ under some technical assumptions on the convex hull of $\cA$ (Theorems \ref{Theorem: Bounds reg Hoa-Stuckrad}, \ref{Theorem: Bounds reg Hoa-Stuckrad-continuacio} and \ref{Theorem: Bounds reg Hoa-Stuckrad-reduction number}). We end this section with Example \ref{Example: Section 4 compare} where we compare our results with previous bounds for the phase transition $n_{0}(\cA)$.

In Section \ref{Section: GT-sumsets and RL-varieties.}, we focus our attention on how far are the bounds for $n_0(\cA)$ from its real value. We introduce the notions of $GT-$subsets and $GT-$sumsets (Definition \ref{Definition: GT-subset}), they are uniquely determined by the $\ZZ_{\geq 0}^{n+1}-$solutions of linear systems of congruences. We show that this algebraic property allows to improve significantly the bound for $n_0(\cA)$ (Proposition \ref{Proposition: Phase transition GT-subsets}) and, in several cases, to actually compute the polynomial $p_{\cA}(t)$ (Propositions \ref{Proposition: polynomial GT-surfaces} and \ref{Proposition: polynomial d prime}). Finally, we use the connection between sumsets and geometry to estimate the Castelnuovo--Mumford regularity and to compute the degree and the Hilbert Polynomial of $RL-$varieties (Theorem \ref{Proposition: degree RL-variety}), a family of smooth rational monomial projections of $X_{n,d_{\cA}}$ introduced in \cite{ThesisLiena}. $RL-$varieties are actually monomial projections image of embeddings of $\PP^{n}$. Using this fact along with the combinatorial structure of the subset defining them, in \cite{ThesisLiena} it is determined the cohomology of the normal bundle of any $RL-$variety.  

\medskip
\noindent {\bf Notation.} Throughout this paper $\kk$ will be
 an algebraically closed field of characteristic zero and we set $R:=\kk[x_0,\hdots ,x_n].$ 
  Let $A=\sum _{i\ge 0} A_{i}$ be an standard $\kk = A_0$ algebra, i.e. $A = \kk[A_1]$. We denote by $\HF_A$ the Hilbert function of $A$, i.e. $\HF_A(i) =
\dim_{\kk} A_{i}$ for all $i\ge  0$. It is well known that there exists a rational coefficient polynomial $\HP_{A}$, the
Hilbert polynomial of $A$, such that $\HP_A(i) = \HF_A(i)$ for $i\gg  0$.
 
Given  integers $n, d\ge 1$, we define the  {\em $d-$th binomial expansion of $n$} as
$$
n=\binom{m_d}{d}+\cdots +\binom{m_e}{e}
$$
where $m_d>\cdots >m_e\ge e\ge 1$ 
are uniquely determined integers (see \cite[Lemma 4.2.6]{BH93}).
We write
$$
n^{<d>}=\binom{m_d+1}{d+1}+\cdots +\binom{m_e+1}{e+1}.
$$

\medskip
\section{Preliminaries}\label{Preliminaries}  In this section, we gather the main notations, definitions and results we use in this paper.  The reader can look at \cite{Freiman, Geroldinger-Ruzsa, Nathanson} for more details. 

\subsection{Sumsets} Let $n \geq 1$ be an integer and $\cA \subset \ZZ^{n}$ a non-empty finite subset. For any $t\in \NN$, a {\em $t$-fold sumset} $t\cA$ is defined as follows:
 $$t\cA =\{ a_1+\cdots +a_t \mid a_{i}\in \cA \text{ for all }
1\le i \le t \}.$$
As usual, we set $0\cA=\{0\}$. A longstanding problem in additive combinatorics is to determine the cardinality $|t\cA|$ of the $t$-fold sumset $t\cA$ as $t$ grows. To this end, we introduce the {\em cardinality function}
 $$ \varphi _{\cA}: \NN \longrightarrow \NN, \quad \quad t \mapsto |t\cA|.$$
\begin{proposition}{(\cite[Theorem 1]{Khovanskii})} Let $\cA \subset \ZZ^n$ be a non empty finite subset. There exists a polynomial $p_{\cA}(t) \in \Q[t]$ such that $\varphi_{\cA}(t) = p_{\cA}(t)$ for t sufficiently large. The {\em phase transition} of the cardinality function $\varphi_{\cA}(t)$ is defined as 
 $$n_0(\cA):=\min \{n_0\in \ZZ_{\geq 0} \mid \varphi_{\cA}(t) = p_{\cA}(t), \,\forall t \geq n_0\}.$$
\end{proposition}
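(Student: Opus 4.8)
The plan is to exploit the identification between $t$-fold sumsets and graded pieces of a semigroup ring, exactly as the introduction foreshadows. Writing $\cA = \{a_1, \dots, a_k\} \subset \ZZ^n$ (after translating so that $\cA \subset \ZZ_{\geq 0}^n$, which changes none of the cardinalities $|t\cA|$), one forms the monomials $m_i = y_1^{a_{i,1}} \cdots y_n^{a_{i,n}}$ and the affine semigroup ring $S = \kk[m_1, \dots, m_k]$, graded by giving each generator $m_i$ degree $1$. The key observation is that the degree-$t$ piece $S_t$ has a $\kk$-basis indexed precisely by the distinct monomials $m_{i_1} \cdots m_{i_t}$, i.e.\ by the distinct exponent vectors $a_{i_1} + \cdots + a_{i_t}$, which is exactly $t\cA$. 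Hence $\HF_S(t) = \dim_\kk S_t = |t\cA| = \varphi_\cA(t)$ for all $t \geq 0$.

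Next I would observe that $S$ is a finitely generated standard graded $\kk$-algebra (generated in degree $1$ by $m_1, \dots, m_k$), so it is a quotient $R'/\mathfrak{a}$ of a polynomial ring $R' = \kk[z_1, \dots, z_k]$ by a homogeneous ideal $\mathfrak{a}$ (the toric/lattice ideal of relations among the $m_i$). For any such algebra, the classical Hilbert--Serre theorem guarantees the existence of a polynomial $\HP_S(t) \in \Q[t]$, the Hilbert polynomial, with $\HF_S(t) = \HP_S(t)$ for all $t \gg 0$; one sets $p_\cA := \HP_S$. This immediately gives the polynomiality statement, and the phase transition $n_0(\cA)$ is then nothing but the Castelnuovo--Mumford regularity-type invariant measuring where $\HF_S$ and $\HP_S$ agree — but for the bare existence statement we need only that $\HF_S$ is eventually polynomial, which Hilbert--Serre provides.

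The bound $\deg p_\cA \leq n$ requires an extra step: I would argue that the Krull dimension of $S$ is at most $n+1$, since $S$ is a subring of the Laurent polynomial ring $\kk[y_1^{\pm 1}, \dots, y_n^{\pm 1}]$ (or, homogenized, sits inside $\kk[y_0, \dots, y_n]$), whose dimension is $n+1$; therefore, after homogenizing, $\dim S \leq n+1$ and $\deg \HP_S = \dim S - 1 \leq n$. Equivalently, geometrically, $S$ is the homogeneous coordinate ring of the monomial projection $Y_{n, d_\cA}$ of a Veronese variety, which is the image of a rational map from $\PP^n$, hence has dimension at most $n$, so its Hilbert polynomial has degree at most $n$.

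The main obstacle — really the only content beyond bookkeeping — is the first step: verifying cleanly that $\dim_\kk S_t = |t\cA|$, i.e.\ that the natural spanning set $\{m_{i_1} \cdots m_{i_t}\}$ of $S_t$ becomes a basis once one collapses repetitions, and that two products coincide if and only if the corresponding sums of exponent vectors in $\cA$ coincide. This is where the passage to monomials (rather than working with sumsets directly) pays off: distinct monomials in $\kk[y_1, \dots, y_n]$ are linearly independent, so the count is exact with no cancellation. Everything after that is a direct appeal to Hilbert--Serre plus a dimension count, both standard.
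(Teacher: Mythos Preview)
Your proposal is correct and follows essentially the same route the paper itself takes. At the location of the stated Proposition the paper simply cites Khovanskii without proof, but the paper's own derivation in Section~\ref{Section: Additive number theory and projective geometry} (Proposition~\ref{BasicHF}) is precisely your argument: identify $\varphi_{\cA}(t)=|t\cA|$ with the Hilbert function of the standard graded semigroup ring $\kk[\Omega_{n,d_{\cA}}]\cong A(Y_{n,d_{\cA}})$, then invoke the Hilbert--Serre theorem to obtain the polynomial $p_{\cA}=\HP_{\cA}$ of degree $\dim Y_{n,d_{\cA}}\le n$. The only cosmetic difference is that the paper homogenizes via the extra variable $x_0$ (Definition~\ref{Definition: sets A}) to sit inside $\kk[x_0,\dots,x_n]$, whereas you grade the affine semigroup ring directly by declaring each generator to have degree $1$; these are the same algebra with the same grading.
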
 
The coefficients of the polynomial $p_{\cA}(t)$ and the value of $\varphi_{\cA}(t)$ for small $t$ are barely known. Another interesting problem is to determine the phase transition $n_0(\cA)$ of $\varphi_{\cA}$. The case $n=1$ has lately received a lot of attention and our goal is to address the general case, i.e. $n \ge 1$. First, we deal with arbitrary subsets $\cA \subset \ZZ^{n}$ and, later in Section \ref{Section: GT-sumsets and RL-varieties.}, we restrict our attention to suitable subsets ($GT-$subsets) to improve previous results.

Let us start with easy examples which show  that the behaviour of $|t\cA|$ for small $t$, the coefficients of $p_{\cA}(t)$ and the phase transition strongly depend on the structure of $\cA$.

\begin{example} \rm \label{firstexample} (i) We consider the subset $\cA^1 =\{(0,0),(3,0),(2,2),(0,1)\} \subset \ZZ^{2}$. We have $|\cA^1 |=4$, $|2\cA^1|=10$, $|3\cA^1|=20$, $|4\cA^1|=35$ and $|t\cA^1|=4t^2-16t+36$ for all $t\ge 5$. Therefore, the phase transition is $5$.

\noindent (ii) We consider now the set $\cA^2=\{(0,0),(2,0),(2,2),(0,1)\} \subset \ZZ^{2}$. We have $|\cA^2|=4$, $|2\cA^2|=10$, $|3\cA^2|=19$, $|4\cA^2|=31$ and $|t\cA^2|=\frac{3}{2}(t^2+t)+1$ for $t\ge 0$. 
Therefore, the phase transition is 0.

Notice that $|\cA^1|=|\cA^2|$, and $\cA^1$, $\cA^2$ differ only by one element. Nevertheless, the behaviour of the $t-$fold sumsets $|t\cA^1|$ and $|t\cA^2|$ and the phase transition drastically change. This phenomenon  will be explained geometrically in next sections.
\end{example}

Next goal is to identify $p_{\cA}(t)$ with the Hilbert polynomial of a 
suitable monomial projection $Y_{n,d}$ of the Veronese variety $X_{n,d}$;  and use the geometry of Veronese varieties and their monomial projections to determine upper bounds for the phase transition as well as for identifying certain coefficients of $p_{\cA}(t)$. For sake of completeness we recall below the basic facts on Veronese varieties.

\subsection{Veronese varieties} 
 We fix integers $n,d \geq  1$ and we set $N_{n,d}:=\binom{n+d}{n}$. We consider the set  $\cM_{n,d} = \{m_{0},\hdots, m_{N_{n,d}-1}\} \subset R$  of {\em all} monomials of degree $d$ in $R$, ordered lexicographically. The {\em Veronese variety} $\cV_{n,d} \subset \PP^{N_{n,d}-1}$ is defined as the image of the Veronese embedding of $\PP^{n}$
\[\nu_{n,d}: \PP^{n} \longrightarrow \PP^{N_{n,d}-1}\]
which sends a point $p = (x_0:\cdots:x_n) \in \PP^{n}$ to $\nu_{n,d}(p) = (m_{0}(p):\cdots : m_{N_{n,d}-1}(p)) \in \PP^{N_{n,d}-1}$ (for further details see, for instance, \cite{Ha}).

We take new variables $w_0, \hdots, w_{N_{n,d}-1}$ and we set $S := \kk[w_0, \hdots, w_{N_{n,d}-1}]$. The homogeneous ideal $\I(\cV_{n,d})\subset S$ of the Veronese variety $\cV_{n,d} \subset \PP^{N_{n,d}-1}$ is the homogeneous binomial prime ideal generated by all binomials of degree $2$ of the form:
\begin{equation}\label{Equation: generators I(Veronese)}
w_{i}w_{j}-w_{\ell }w_{k}  \quad \text{such that} \quad  m_{i}m_{j}=m_{\ell }m_{k}.
\end{equation}

\begin{example}\rm
We take an integer $d \geq 1$ and we set $S = \kk[w_0,\hdots, w_{d}]$. The {\em rational normal curve of degree $d$} is the Veronese curve $\cV_{1,d} \subset \PP^{d}$. It is the image of the morphism
\[\nu_{1,d}: \PP^{1} \longrightarrow \PP^{d}, \quad \nu_{1,d}(x_0:x_1) = (x_0^d:x_0^{d-1}x_1:\cdots:x_0x_1^{d-1}:x_1^{d}).\]
The homogeneous ideal $\I(\cV_{1,d}) \subset S$ of $\cV_{1,d}$ is the binomial prime ideal generated by the $\binom{d}{2}$ quadrics obtained from the $2 \times 2$ minors of the matrix
\[\left(\begin{array}{llll}
w_0 & w_1 & \cdots & w_{d-1}\\
w_1 & w_2 & \cdots & w_{d}
\end{array}\right).\] 
These quadrics correspond to the $2 \times 2$ minors of the matrix of rank $1$
\[\left(\begin{array}{llll}
x_0^d & x_0^{d-1}x_1 & \cdots & x_0x_1^{d-1}\\
x_0^{d-1}x_1 & x_0^{d-2}x_1 & \cdots & x_1^d
\end{array}\right).\] 
An easy computation shows that the Hilbert polynomial of $X_{1,d}$ is:
$$\HF_{X_{1,d}}(t):=\dim (S/\I(X_{1,d}))_t=td+1=\HP_{X_{1,d}}(t) \text{ for all } t\ge0.
$$
\end{example}

\medskip
  Given a subset $\Omega_{n,d} = \{m_{i_0}, \hdots, m_{i_{\mu_{n,d}-1}}\} \subseteq \cM_{n,d}$ of $\mu_{n,d}=|\Omega_{n,d} |$ monomials, we denote by 
  \[\varphi_{\Omega_{n,d}}: \PP^{n} \dashrightarrow \PP^{\mu_{n,d}-1}\]
  the rational map defined by $\Omega_{n,d}$ which sends a point $p = (x_0: \cdots: x_n) \in \PP^{n}$ to $\varphi_{\Omega_{n,d}}(p) = (m_{i_0}(p): \cdots : m_{i_{\mu_{n,d}-1}}(p)) \in \PP^{\mu_{n,d}-1}.$ We say that $Y_{n,d} := \overline{\varphi_{\Omega_{n,d}}(\PP^{n})} \subset \PP^{\mu_{n,d}-1}$ is the {\em monomial projection of the Veronese variety $\cV_{n,d}$ parameterized by $\Omega_{n,d}$.}  So, we have the commutative diagram:

\begin{center}
\begin{tikzpicture}
\matrix (M) [matrix of math nodes, row sep=3em, column sep=3em, nodes={minimum height = 1cm, minimum width = 1cm, anchor=center}]{
\PP^{n} & X_{n,d} \\
        & Y_{n,d} \\
};
\draw[->] (M-1-1) -- (M-1-2) node[midway,above]{$\nu_{n,d}$};
\draw[->] (M-1-2) -- (M-2-2) node[midway,right]{$\pi$};
\draw[->,dashed] (M-1-1) -- (M-2-2) node[midway,below]{$\varphi_{\Omega_{n,d}}\hspace{0.5cm}$ };
\end{tikzpicture}
\end{center}
where $\pi$ is the projection of the Veronese variety $X_{n,d} \subset \PP^{N_{n,d}-1}$ from the linear subspace generated by the coordinate points $(0:\cdots:0:1:0:\cdots:0) \in \PP^{N_{n,d}-1}$ with $1$ in position $i$ such that $m_i \notin \Omega_{n,d}$  to the linear subspace $V(w_{m_{i}}, \; m_{i}  \notin \Omega_{n,d}) \cong \PP^{\mu_{n,d}-1}  \subset \PP^{N_{n,d}-1}$.
In particular, $Y_{n,d} \subset \PP^{\mu_{n,d}-1}$ is called a {\em simple} (resp. {\em double}) monomial projection if $\Omega_{n,d}$ is obtained from $\cM_{n,d}$ by deleting only one monomial (resp. two monomials).
 
 \begin{example}\rm 
 We take $n = 1$, $d = 4$ and $\Omega_{1,4} = \{x_0^4,x_0^3x_1,x_0x_1^3,x_1^4\} \subset \kk[x_0,x_1]$. The simple monomial projection $Y_{1,4} \subset \PP^{3}$ parameterized by $\Omega_{1,4}$ is the rational quartic in $\PP^{3}$ obtained as the monomial projection of the rational normal curve $X_{1,4}$ of degree $4$ in $\PP^{4} = \proj(\kk[w_0,w_1,w_2,w_3,w_4])$ from the coordinate point $(0:0:1:0:0)$ to the hyperplane $V(w_2) \subset \PP^{4}$. 
\end{example}

\section{Sumsets and monomial projections of Veronese varieties}
\label{Section: Additive number theory and projective geometry}
The goal of this section is to associate to {\em any} finite subset $\cA \subset \ZZ^{n}$ a monomial projection  $Y_{n,d_{\cA}}$ of the Veronese variety $X_{n,d_{\cA}}$ whose Hilbert function $\HF_{Y_{n,d_{\cA}}}(t)$ models the cardinality function $\varphi_{\cA}(t) = |t\cA|$ and, hence, allows to conclude that there is  a polynomial $p_{\cA}(t)\in \Q[t]$  of degree the dimension of $Y_{n,d_{\cA}}$ such that $\varphi_{\cA}(t)= p_{\cA}(t)$ for $t$ sufficiently large. 

\begin{definition}\label{Definition: sets A}
\rm 
 For any integer $n \geq 1$  and any finite subset $\cA \subset \ZZ^{n}_{\geq 0}$,  we denote by $d_{\cA}$ the minimum of the integers $\delta \geq 0$ such that any element $a = (a_1,\hdots, a_n) \in \cA$ satisfies $|a| = a_1 + \cdots + a_n \leq \delta$.
We define $Y_{n,d_{\cA}} \subset \PP^{|\cA|-1}$ the  monomial projection of the Veronese variety $X_{n,d_{\cA}} \subset \PP^{N_{n,d_{\cA}}-1}$ parameterized by the set of monomials
\[\Omega_{n,d_{\cA}} = \{x_0^{d_{\cA}-|a|}x_1^{a_1}\cdots x_{n}^{a_{n}} \mid a \in \cA\}.\]
\end{definition}
For simplicity, we use the notation $Y_{n,d_{\cA}}$, notwithstanding the reader has to be aware of the fact that $Y_{n,d_{\cA}}$ depends on  $\cA $ and not just on $d_{\cA}$.
\begin{remark}\label{Remark: tau}\rm 
Given a finite subset $\cA \subset \ZZ^{n}$, there is a unique translation $\tau: \ZZ^{n} \to \ZZ^{n}$ such that $\tau(\cA) \subset \ZZ^{n}_{\geq 0}$ and $\GCD(m \in \Omega_{n, d_{\tau(\cA)}}) = 1$. 
In fact, $\tau$ is the translation defined by $-C$ where the $i$-th component of $C$ is the minimum of the $i$-th components of the elements $a\in \cA$. 
We have: 
$$\varphi_{\cA}(t) = |t\cA| = |t\tau(\cA)| =  \varphi_{\tau(\cA)}(t)$$ 
for all $ t \geq 0.$
Hence, without loss of generality  we will assume in the sequel that $\cA  \subset \ZZ^{n}_{\geq 0}$
and $\GCD(m \in \Omega_{n, d_{\cA}}) = 1$.
\end{remark}


 Given an integer $n\ge 1$ and a finite subset  $\cA \subset \ZZ_{\geq 0}^{n}$,  we consider $\Omega_{n,d_{\cA}} = \{m_1,\hdots,m_{|\cA|}\} \subset R$ the set of monomials determined by $\cA$. We take $w_1,\hdots,w_{|\cA|}$ new variables and $S = \kk[w_1,\hdots,w_{|\cA|}]$. 
 The homogeneous ideal $\I(Y_{n,d_{\cA}}) \subset S$ of $Y_{n,d_{\cA}}$ is the kernel of the epimorphism 
\[\rho: S \to \kk[\Omega_{n,d_{\cA}}], \quad \rho(w_i) = m_i, \; i = 1,\hdots, |\cA|.\]
It is a binomial prime ideal of $S$ generated by (\cite[pag 335]{Hochster}):
\begin{equation}\label{Equation: binomials}
\left\{\prod_{i=1}^{|\cA|} w_i^{\alpha_i} - \prod_{i=1}^{|\cA|} w_i^{\beta_i} \left| \; \prod_{i=1}^{|\cA|} m_i^{\alpha_i} = \prod_{i=1}^{|\cA|} m_i^{\beta_i}, \; \alpha_i,\beta_i \in \ZZ_{\geq 0}\right. \right\}    
\end{equation}
and the homogeneous coordinate ring $A(Y_{n,d_{\cA}}) := S/\I(Y_{n,d})$ of $Y_{n,d_{\cA}}$ is isomorphic to $\kk[\Omega_{n,d_{\cA}}]$. 

To simplify, we denote by $\HF_{\cA}$ the Hilbert function of  $A(Y_{n,d_{\cA}})$ (see \cite[Chapter 4]{BH93} for further details).
Recall that for any integer $t \geq 0$, 
 $\HF_{\cA}(t)$ equals to  $\dim_{\kk} A(Y_{n,d_{\cA}})_t=\dim_{\kk} \kk[\Omega_{n,d_{\cA}}]_{td}$. Moreover, we have:

\begin{proposition}
\label{BasicHF}
For any integer $n\ge 1$ and any finite subset  $\cA \subset \ZZ_{\geq 0}^{n}$, it holds:
\begin{enumerate}
    \item[(1)]
$\HF_{\cA}(t) = \varphi_{\cA}(t) = |t\cA|$ for all $t\ge 0$.
\item[(2)] \cite[Theorem 4.1.3]{BH93}
There exists a polynomial $\HP_{\cA}(t)=p_{\cA}(t) \in \Q[t]$ of degree $r = \dim(Y_{n,d_{\cA}})\le n$,  the Hilbert polynomial of $Y_{n,d_{\cA}}$, such that
$\HP_{\cA}(t)=p_{\cA}(t)$ for $t$ sufficiently large. 
\end{enumerate}
\end{proposition}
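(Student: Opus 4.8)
The plan is to prove Proposition \ref{BasicHF} in two steps, corresponding to its two parts, with part (1) being the heart of the matter and part (2) following immediately from the standard Hilbert function theory once (1) is in hand.

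\textbf{Part (1).} The key observation is that the degree-$t$ graded piece of the homogeneous coordinate ring $A(Y_{n,d_{\cA}}) \cong \kk[\Omega_{n,d_{\cA}}]$ is spanned, as a $\kk$-vector space, by the monomials $m_{i_1} m_{i_2} \cdots m_{i_t}$ with $i_1 \leq \cdots \leq i_t$ in $\{1,\hdots,|\cA|\}$; each such product is a monomial in $R = \kk[x_0,\hdots,x_n]$ of degree $t\,d_{\cA}$. First I would record the elementary fact that two such products $m_{i_1}\cdots m_{i_t}$ and $m_{j_1}\cdots m_{j_t}$ are equal as elements of $R$ if and only if they have the same multidegree (exponent vector), and that distinct monomials in $R$ are $\kk$-linearly independent; hence $\HF_{\cA}(t) = \dim_{\kk}\kk[\Omega_{n,d_{\cA}}]_{td}$ equals the number of distinct monomials of $R$ obtainable as a product of $t$ elements of $\Omega_{n,d_{\cA}}$. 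Next I would exhibit the bijection between these monomials and the elements of the sumset $t\cA$. Recall each $m_i = x_0^{d_{\cA}-|a^{(i)}|} x_1^{a^{(i)}_1}\cdots x_n^{a^{(i)}_n}$ for $a^{(i)} \in \cA$. A product $m_{i_1}\cdots m_{i_t}$ has exponent vector in the $x_1,\hdots,x_n$ coordinates equal to $a^{(i_1)} + \cdots + a^{(i_t)} \in t\cA$, while the $x_0$-exponent is forced to be $t\,d_{\cA} - |a^{(i_1)}+\cdots+a^{(i_t)}|$, so the full exponent vector is determined by the element of $t\cA$ it produces. Conversely every element of $t\cA$ arises this way. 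Thus the map sending $m_{i_1}\cdots m_{i_t}$ to $a^{(i_1)}+\cdots+a^{(i_t)}$ is a well-defined bijection from the monomial basis of $\kk[\Omega_{n,d_{\cA}}]_{td}$ onto $t\cA$, giving $\HF_{\cA}(t) = |t\cA| = \varphi_{\cA}(t)$. The case $t=0$ is immediate since $A(Y_{n,d_{\cA}})_0 = \kk$ and $0\cA = \{0\}$.

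\textbf{Part (2).} This is now a direct invocation of the cited result \cite[Theorem 4.1.3]{BH93}: for any standard graded $\kk$-algebra $A$, the Hilbert function $\HF_A(t)$ agrees with a polynomial $\HP_A(t) \in \Q[t]$ for $t \gg 0$, of degree equal to $\dim A - 1$ (equivalently the dimension of the projective scheme $\Proj A$). Applying this to $A = A(Y_{n,d_{\cA}})$, combined with part (1), gives a polynomial $p_{\cA}(t) := \HP_{\cA}(t)$ with $\varphi_{\cA}(t) = p_{\cA}(t)$ for $t$ sufficiently large and $\deg p_{\cA} = \dim Y_{n,d_{\cA}} =: r$. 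The inequality $r \leq n$ holds because $Y_{n,d_{\cA}}$ is the image of a rational map from $\PP^n$, hence has dimension at most $n$ (one may also see this from $\kk[\Omega_{n,d_{\cA}}] \subseteq R$, a domain, and the fact that the generators $m_i$ all have the same degree $d_{\cA}$, so the Krull dimension of $\kk[\Omega_{n,d_{\cA}}]$ is at most that of $R$, namely $n+1$).

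\textbf{Main obstacle.} The only genuinely nontrivial point is the bijection in part (1): one must be careful that the $\kk$-algebra $\kk[\Omega_{n,d_{\cA}}]$ is the \emph{semigroup ring} on the monomials $m_i$ inside $R$ — i.e.\ that its degree-$td_{\cA}$ piece is exactly the $\kk$-span of the $t$-fold products, with the relations among those products being precisely the multiplicative relations $\prod m_i^{\alpha_i} = \prod m_i^{\beta_i}$ from (\ref{Equation: binomials}) — and to match the additive structure of $\ZZ^n$ (used to form $t\cA$) with the multiplicative structure of monomials in $R$ via the logarithm/exponent-vector map. The condition $\GCD(m \in \Omega_{n,d_{\cA}}) = 1$ from Remark \ref{Remark: tau} is harmless here: the argument goes through verbatim, and that normalization is only needed later for geometric statements about $Y_{n,d_{\cA}}$. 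Everything else is bookkeeping with exponent vectors.
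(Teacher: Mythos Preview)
Your proposal is correct and follows exactly the approach the paper has in mind: the paper does not give an explicit proof of Proposition~\ref{BasicHF}, instead treating part (1) as an immediate consequence of the isomorphism $A(Y_{n,d_{\cA}}) \cong \kk[\Omega_{n,d_{\cA}}]$ together with the remark preceding the proposition that $\HF_{\cA}(t) = \dim_{\kk}\kk[\Omega_{n,d_{\cA}}]_{td}$, and part (2) as a direct citation of \cite[Theorem 4.1.3]{BH93}. Your argument simply makes explicit the monomial bijection $\kk[\Omega_{n,d_{\cA}}]_{td} \leftrightarrow t\cA$ that the paper leaves to the reader, so there is nothing to compare or correct.
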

\noindent
The {\em degree} $\deg(Y_{n,d_{\cA}})$ of $Y_{n,d_{\cA}}$ is defined algebraically as $r!$ times the leading coefficient of $\HP_{\cA}(t)$. It corresponds geometrically to the number of points of intersection of $Y_{n,d_{\cA}}$ with a sufficiently general linear subspace of $\PP^{|\cA|-1}$ of dimension $|\cA| - r - 1$ (see, for instance, \cite[Chapter \textsc{i} \S 7]{Hartshorne}). This perspective provides that for $t$ large enough, $\varphi_{\cA}(t) = |t\cA|$ is a polynomial $p_{\cA}(t) \in \Q[t]$ of degree $r$ and, hence, a geometrical interpretation of its leading coefficient $\frac{\deg(Y_{n,d_{\cA}})}{r!}$. 
In addition, the phase transition $n_{0}(\cA)$, known also as the regularity of the Hilbert function, is bounded by the Castelnuovo-Mumford regularity $\reg(\cA)$ plus one (see also \cite{BH93} for further details).

\medskip
The numerical functions $H:\mathbb N \longrightarrow \mathbb N$ that are Hilbert functions of standard $\kk$-algebras were characterized by Macaulay, \cite{BH93}. Indeed, given a numerical function $H:\mathbb N \longrightarrow \mathbb N$ the following conditions are equivalent:
\begin{enumerate}
    \item[(i)] There exists a $\kk$-algebra $A$ such that $H = \HF_A$,
    \item[(ii)]  $H(0)=1$ and $H(t+1)\le H(t)^{<t>}$ for all $t \ge 1$.
\end{enumerate}

\noindent 
Hence, for any finite subset $\cA\subset \ZZ_{\ge 0}^n$, the growth of the cardinality function $\varphi _{\cA}(t)=|t\cA |$  satisfies:
\begin{equation}\label{Hilbert function Macaulay}
\varphi _{\cA}(t+1)\le \varphi _{\cA}(t)^{<t>}\quad  \text{ for all } t\ge 1.
\end{equation}

The polynomials $p(t)\in \mathbb Q[t]$ that are Hilbert polynomials are characterized 
as those admitting a Gotzmann development (see \cite{Gotz} and \cite{Green}).
Recall that  $p(t)  \in \mathbb Q[t]$ admits a Gotzmann development if $p=0$ or there exist integers $a_1\ge \cdots \ge a_s\ge 0$ such that
\begin{equation}\label{Gotzmann expansion}
p(t)=\binom{t+a_1}{a_1}+\binom{t+a_2-1}{a_2}+\cdots + \binom{t+a_s-(s-1)}{a_s}.
\end{equation}
Moreover, this representation is unique. From \cite[Theorem 4.4]{Blanc} we have that the following conditions are equivalent:
\begin{enumerate}
    \item[(i)]  $p(t)\in \mathbb Q[t]$ is the Hilbert polynomial of a standard $\kk$-algebra $A$
    \item[(ii)] $p(t)$ admits a Gotzmann development.
\end{enumerate}
The integer $s$ is an upper bound of the Castelnuvo-Mumford regularity $\reg(A)$ of $A$ (see \cite{BH93}).
In general this bound is far to be optimal. In fact, the  Eisenbud-Goto conjecture claims that if $A$ is an integral $\kk$-algebra then
$$
\reg(A)\le \deg(A)-\codim(A)+1.
$$
Although this bound has been disproved (\cite{MacPee}), it is true for several types of rings covering some of the considered in this paper, see \cite{Nit} and the references therein. 
See also Section \ref{sec-phase} for further results on the phase transition of finite sets $\cA \subset \ZZ_{\geq 0}^{n}$.

\medskip
Since $Y_{n,d_{\cA}}$ is a monomial projection of the $n$-dimensional Veronese variety $X_{n,d_{\cA}}$, its dimension is bounded by $n$ and its degree by the degree $d_{\cA}^n$ of $X_{n,d_{\cA}}$. 
However, the following examples show that both the dimension and the degree of the variety $Y_{n,d_{\cA}}$ can be smaller than those of the Veronese variety $X_{n,d_{\cA}}$. 

\begin{example}\rm \label{ex2} (i) We take integers $n,d \geq 1$ and $\cA = \{(a_1,\hdots,a_n) \in \ZZ_{\geq 0}^{n} \mid a_1+\cdots+a_n\leq d\}.$ We  have $d_{\cA}=d$, $|\cA |=N_{n,d}$ and we observe that $\Omega_{n,d_{\cA}}$ is the set of all monomials of degree $d$ in $R$. Therefore, $Y_{n,d_{\cA}}$ is the Veronese variety $X_{n,d} \subset \PP^{N_{n,d}-1}$.

\noindent (ii) We take $n = 2$ and $\cA = \{(0,0), (1,0), (0,1), (2,0), (0,2), (3,0), (2,1), (1,2), (0,3)\}\subset \ZZ^2_{\ge 0}$. We have $d_{\cA}=3$, $|\cA|=9$ and the associated monomial projection $Y_{2,3} \subset \PP^{8}$ of the Veronese surface $X_{2,3} \subset \PP^{9}$ is the surface parameterized by 
$\Omega_{2,3} = \{x_0^3, x_0^2x_1, x_0^2x_2, x_0x_1^2, x_0x_2^2, x_1^3, x_1^2x_2, x_1x_2^2, x_2^3\}$. It is a simple monomial projection of $X_{2,3}$. 

\noindent (iii) We take $n = 2$ and $\cA = \{(3,1), (2,2), (1,3), (0,4)\}\subset \ZZ^2_{\ge 0}$.  We have $d_{\cA}=|\cA|=4$ and the associated monomial projection $Y_{2,4} \subset \PP^{3}$ of $X_{2,4} \subset \PP^{14}$ is a curve parameterized by 
$\Omega_{2,4} = \{x_1^3x_2, x_1^2x_2^2, x_1x_2^3, x_2^4\}$.

\noindent (iv) We come back to Examples \ref{firstexample} (i) and (ii).
We take $\cA ^1= \{(0,0),(3,0),(2,2),(0,1)\}\subset \ZZ^2_{\ge 0}$. We have $d_{\cA^1} = |\cA^1|=4$ and the associated monomial projection $Y_{2,4}^1 \subset \PP^{3}$ of $X_{2,4} \subset \PP^{14}$ is the surface of degree 8 parameterized by 
$\Omega_{2,4}^1 = \{x_0^4, x_0^3x_2, x_0x_1^3, x_1^2x_2^2\}$. If we fix coordinates $w_0,w_1,w_2,w_3$ in $\PP^3$, the equation of $Y_{2,4}^1$ is: $w_0^5w_3^3-w_2^2w_1^6$ (see (\ref{Equation: binomials})). 
We slightly modify $\cA^1$ and we take 
 $\cA^2 = \{(0,0),(2,0),(2,2),(0,1)\}\subset \ZZ^2_{\ge 0}$. We have $d_{\cA^2} = |\cA^2| = 4$ and the associated monomial projection $Y_{2,4}^2 \subset \PP^{3}$ of $X_{2,4} \subset \PP^{14}$ is the cubic surface parameterized by 
$\Omega_{2,4}^2 = \{x_0^4,x_0^3x_2, x_0^2x_1^2, x_1^2x_2^2\}$. It has equation: $w_0^2w_3-w_2w_1^2$ (see (\ref{Equation: binomials})).
\end{example}

\medskip
From now onward, we restrict our attention to finite subsets $\cA  \subset \ZZ_{\geq 0}^{n}$ associated to  $n-$dimensional monomial projections $Y_{n,d_{\cA}}$ of $X_{n,d_{\cA}}$. This restriction is quite natural and well controlled since we have the following, \cite{Khovanskii}, 

\begin{proposition}\label{Proposition: dimension varietat} Let $\cA \subset \ZZ_{\geq 0}^n$ be a finite set. Then, the associated variety $Y_{n,d_{\cA}}$ has dimension $n$ if and only if $\ZZ(\cA-\cA)$ has maximal rank.
\end{proposition}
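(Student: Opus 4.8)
The plan is to translate the statement into semigroup language using the isomorphism $A(Y_{n,d_{\cA}}) \cong \kk[\Omega_{n,d_{\cA}}]$ established above, and then compute the Krull dimension of this semigroup ring directly. Write $\cA = \{a^{(1)},\hdots,a^{(|\cA|)}\} \subset \ZZ_{\geq 0}^{n}$ and set $\widehat{a}^{(j)} := (d_{\cA} - |a^{(j)}|,\, a^{(j)}_1,\hdots,a^{(j)}_n) \in \ZZ_{\geq 0}^{n+1}$, so that $m_j = x^{\widehat{a}^{(j)}}$ and $\kk[\Omega_{n,d_{\cA}}] = \kk[ \mathbb{N}\langle \widehat{a}^{(1)},\hdots,\widehat{a}^{(|\cA|)}\rangle ]$ is the affine semigroup ring of the subsemigroup $\Gamma \subset \ZZ^{n+1}$ generated by the $\widehat{a}^{(j)}$. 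It is standard (see, e.g., \cite{BH93}) that the Krull dimension of an affine semigroup ring $\kk[\Gamma]$ equals the rank of the group $\ZZ\Gamma$ generated by $\Gamma$. Hence $\dim Y_{n,d_{\cA}} = \rk \ZZ\Gamma - 1$, and the task reduces to showing
\[
\rk \ZZ\langle \widehat{a}^{(1)},\hdots,\widehat{a}^{(|\cA|)}\rangle = n+1 \iff \rk \ZZ(\cA - \cA) = n.
\]

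The key step is a linear-algebra bookkeeping argument relating the two lattices. Since every $\widehat{a}^{(j)}$ has coordinate sum equal to $d_{\cA}$, all the generators lie in the affine hyperplane $\{\,y \in \ZZ^{n+1} : y_0 + \cdots + y_n = d_{\cA}\,\}$; consequently the differences $\widehat{a}^{(j)} - \widehat{a}^{(k)}$ lie in the rank-$n$ linear hyperplane $\{y_0 + \cdots + y_n = 0\}$, and under the projection $\ZZ^{n+1} \to \ZZ^{n}$ forgetting the $0$-th coordinate these differences map bijectively onto the differences $a^{(j)} - a^{(k)}$. Thus $\ZZ\langle \widehat{a}^{(j)} - \widehat{a}^{(k)}\rangle \cong \ZZ(\cA - \cA)$, giving $\rk \ZZ\langle \widehat{a}^{(j)} - \widehat{a}^{(k)}\rangle = \rk \ZZ(\cA - \cA)$. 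Finally one compares $\ZZ\Gamma$ with the difference lattice: because $\GCD(m_1,\hdots,m_{|\cA|}) = 1$ (which we have assumed via the normalizing translation $\tau$ of Remark \ref{Remark: tau}), not all generators are divisible by $x_0$, so some $\widehat{a}^{(j)}$ has $0$-th coordinate zero while in general the $0$-th coordinates vary, and a short argument shows $\ZZ\Gamma$ is obtained from $\ZZ\langle \widehat{a}^{(j)} - \widehat{a}^{(k)}\rangle$ by adjoining one further independent generator $\widehat{a}^{(1)}$ whose coordinate sum $d_{\cA} \geq 1$ is nonzero; hence $\rk \ZZ\Gamma = \rk \ZZ(\cA-\cA) + 1$. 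Combining, $\dim Y_{n,d_{\cA}} = \rk \ZZ(\cA - \cA)$, which is $n$ exactly when $\ZZ(\cA-\cA)$ has maximal rank.

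The main obstacle is the last comparison: one must be careful that adjoining $\widehat{a}^{(1)}$ genuinely increases the rank, i.e. that $\widehat{a}^{(1)} \notin (\ZZ\langle \widehat{a}^{(j)} - \widehat{a}^{(k)}\rangle) \otimes \Q$. This holds because every element of the difference lattice has coordinate-sum zero whereas $\widehat{a}^{(1)}$ has coordinate-sum $d_{\cA} \geq 1$; the hypothesis $d_{\cA} \geq 1$ (equivalently $\cA \neq \{0\}$, which may be assumed since otherwise everything is trivial) is exactly what makes this work. Alternatively, and perhaps more cleanly, I would bypass the explicit lattice computation and invoke Khovanskii's theorem directly: by Proposition \ref{BasicHF}(1) we have $\HF_{\cA}(t) = \varphi_{\cA}(t)$, and by Khovanskii's leading-term formula the degree of the polynomial $p_{\cA}(t)$ equals $\rk \ZZ(\cA - \cA)$; on the other hand Proposition \ref{BasicHF}(2) identifies this degree with $\dim Y_{n,d_{\cA}}$. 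This reduces the proposition to a citation of \cite{Khovanskii}, but the self-contained semigroup argument above is preferable for keeping the geometric picture transparent.
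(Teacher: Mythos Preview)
Your proposal is correct. The paper itself supplies no proof of this proposition: it simply states the result with an attribution to \cite{Khovanskii}. Your closing paragraph --- deducing $\dim Y_{n,d_{\cA}} = \rk \ZZ(\cA-\cA)$ from Khovanskii's leading-term formula via Proposition~\ref{BasicHF} --- is therefore exactly the paper's approach.

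Your primary argument goes further than the paper by giving a self-contained proof through the Krull dimension of the affine semigroup ring $\kk[\Gamma]$. This is correctly executed: the identity $\rk \ZZ\Gamma = \rk \ZZ(\cA-\cA) + 1$ follows cleanly from the observation that the difference lattice sits inside the hyperplane $\{y_0+\cdots+y_n=0\}$ while any single generator $\widehat{a}^{(j)}$ has coordinate sum $d_{\cA} \geq 1$. One small remark: the digression through the condition $\GCD(m_1,\hdots,m_{|\cA|})=1$ and the vanishing of a $0$-th coordinate is unnecessary --- the coordinate-sum argument you give immediately afterwards already forces $\widehat{a}^{(1)} \notin \ZZ\langle \widehat{a}^{(j)}-\widehat{a}^{(k)}\rangle \otimes \Q$, and it needs no hypothesis on the GCD. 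What your self-contained route buys is independence from Khovanskii's theorem, so the dimension statement becomes available prior to (and without circularity with) the Hilbert-polynomial machinery of Proposition~\ref{BasicHF}; what the bare citation buys is brevity.
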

\begin{remark}\rm 
We also have a geometrical version of Proposition \ref{Proposition: dimension varietat}. Indeed, by \cite[Pag. 186]{Beltrametti-Carletti-Gallarati-Monti}, given   $\cA \subset \ZZ_{\geq 0}^{n}$ a finite set, then the associated variety $Y_{n,d_{\cA}}\subset \PP^{|\cA |-1}$ has dimension $n$ if and only if $\rank ((\partial m_i/\partial x_j))_{\substack{i=1, \hdots,|\cA | \\ j=0, \hdots, n}}=n+1$, where $\{m_i \}_{i=1, \hdots, |\cA |}$ are the monomials parameterizing $Y_{n,d_{\cA}}$.
\end{remark}

  It immediately follows that for any finite set $\cA \subset \ZZ_{\geq 0}^{n}$ associated to an $n-$dimensional monomial projection $Y_{n,d_{\cA}}$ of $X_{n,d_{\cA}}$ we have $|\cA |\ge n+1$. Our first goal is to study $\varphi_{\cA}(t) = |t\cA|$ for small values of $|\cA|$, i.e.  $|\cA|=n+1$ and $|\cA| = n+2$ and, to provide easier and new proofs of known formulae. To this purpose, we need to fix some notation.
   We denote by  $\conv(\cA)$  the convex hull of $A$ and let $[\ZZ^{n}:\ZZ(\cA-\cA)]$ be the index of of the subgroup $\ZZ(\cA-\cA)$ in $\ZZ^{n}$.
 It holds:
  
  \begin{proposition}\label{Proposition: cases n+1 and n+2}  Let  $\cA \subset \ZZ_{\geq 0}^{n}$ be a finite set associated  to an  $n-$dimensional projective variety $Y_{n,d_{\cA}}$ of degree $d$.
 \begin{itemize}
      \item[(i)]
  If $|\cA |= n+1$ then
  $$\varphi _{\cA}(t)=p_{\cA}(t)=\binom{t+n}{n} \text{ for all }  t\ge 0.
  $$
  In particular, $n_0(\cA)=0.$
  \item[(ii)]  If $|\cA |= n+2$, then $n_0(\cA) = d-n-1$ and  
  \[\varphi_{\cA}(t) = \left\{\begin{array}{lll}
\binom{t+n+1}{n+1} & \text{if} & 0 \leq t < d-n-1 \\[0.3cm]
\binom{t+n+1}{n+1} - \binom{t - d + n + 1}{n+1} & \text{if} & t \geq d-n-1.
\end{array}\right.\]
Moreover, we have
$$d=\deg(Y_{n,d_{\cA}})=n! \frac{\vol(\conv(\cA))}{[\ZZ^{n}:\ZZ(\cA-\cA)]}.
$$
 \end{itemize}
   \end{proposition}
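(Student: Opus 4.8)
## Proof Proposal

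The plan is to exploit the identification $A(Y_{n,d_{\cA}}) \cong \kk[\Omega_{n,d_{\cA}}]$ from Proposition \ref{BasicHF} together with the fact that $Y_{n,d_{\cA}}$ is the image of a projection of the Veronese variety, reducing everything to the geometry of a rational curve (the rational normal curve and its projections) in the two relevant cases. For part (i), when $|\cA| = n+1$ and $Y_{n,d_{\cA}}$ has dimension $n$, the variety sits in $\PP^{n}$ and is $n$-dimensional; being irreducible it must be all of $\PP^{n}$. Hence $A(Y_{n,d_{\cA}}) \cong S = \kk[w_1,\dots,w_{n+1}]$ (equivalently $\I(Y_{n,d_{\cA}}) = 0$; note (\ref{Equation: binomials}) produces no relations because the $n+1$ monomials in $\Omega_{n,d_{\cA}}$ are multiplicatively independent once $Y_{n,d_{\cA}}$ is $n$-dimensional). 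Therefore $\varphi_{\cA}(t) = \HF_{\cA}(t) = \dim_{\kk} S_t = \binom{t+n}{n}$ for all $t \geq 0$, so $p_{\cA}(t) = \binom{t+n}{n}$ and $n_0(\cA) = 0$.

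For part (ii), with $|\cA| = n+2$, the variety $Y_{n,d_{\cA}} \subset \PP^{n+1}$ is an $n$-dimensional irreducible subvariety of $\PP^{n+1}$, hence a hypersurface, cut out by a single irreducible homogeneous polynomial. By (\ref{Equation: binomials}) this polynomial is a binomial $\prod w_i^{\alpha_i} - \prod w_i^{\beta_i}$ with $\prod m_i^{\alpha_i} = \prod m_i^{\beta_i}$; irreducibility and minimality of the degree force the two monomials in $w$ to have disjoint supports, and the common degree of the binomial equals $\deg(Y_{n,d_{\cA}}) = d$ (the degree of a hypersurface is the degree of its defining form). Thus $\I(Y_{n,d_{\cA}})$ is generated by one form of degree $d$ in $S = \kk[w_1,\dots,w_{n+2}]$. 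The Hilbert function of $S/(F)$ with $\deg F = d$ in $n+2$ variables is computed from the exact sequence $0 \to S(-d) \to S \to S/(F) \to 0$, giving $\HF_{\cA}(t) = \binom{t+n+1}{n+1} - \binom{t-d+n+1}{n+1}$, where the second binomial is interpreted as $0$ for $t < d$; since $\binom{t-d+n+1}{n+1}$ vanishes precisely when $0 \le t-d+n+1 \le n$, i.e. $t \le d-1$, but is a genuine polynomial summand only for $t \ge d-n-1$, one checks that the polynomial $\HP_{\cA}(t) = \binom{t+n+1}{n+1} - \binom{t-d+n+1}{n+1}$ agrees with $\HF_{\cA}(t)$ exactly for $t \ge d-n-1$ and that it fails at $t = d-n-2$. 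This yields both the stated piecewise formula for $\varphi_{\cA}(t)$ and $n_0(\cA) = d-n-1$.

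It remains to prove the degree formula $d = n!\,\vol(\conv(\cA))/[\ZZ^n : \ZZ(\cA-\cA)]$. Here I would invoke Khovanskii's theorem (the Proposition of \cite{Khovanskii} cited in the introduction): the leading coefficient of $p_{\cA}(t)$, which has degree $n$ by Proposition \ref{BasicHF}(2) since $\dim Y_{n,d_{\cA}} = n$, equals $\vol(\conv(\cA))/[\ZZ^n:\ZZ(\cA-\cA)]$. On the other hand, the leading coefficient of $\HP_{\cA}(t)$ is $\deg(Y_{n,d_{\cA}})/n!$ by definition of degree. Comparing the two expressions for the leading coefficient gives the claim. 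Alternatively, and more intrinsically, one can read the degree off the single binomial generator: writing $F = \prod_{i \in I} w_i^{\alpha_i} - \prod_{j \in J} w_j^{\beta_j}$ with $\sum_{i\in I}\alpha_i = \sum_{j\in J}\beta_j = d$, the relation $\sum \alpha_i a_i' = \sum \beta_j a_j'$ among the corresponding lattice points (with the extra coordinate $d_{\cA}-|a|$ appended) encodes the unique affine relation on the $n+2$ points, and a standard computation identifies $d$ with the normalized volume of $\conv(\cA)$ over the index of the difference lattice.

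The main obstacle is the careful bookkeeping in part (ii): one must argue rigorously that the defining ideal is principal generated in degree exactly $d$ (rather than merely that the binomials of (\ref{Equation: binomials}) of possibly varying degrees generate it), and then verify the precise threshold at which $\HF_{\cA}$ starts agreeing with its Hilbert polynomial, i.e. that $n_0(\cA) = d-n-1$ and not something smaller. The first point follows because a codimension-one prime ideal in a polynomial ring is principal and the generating binomials must all be multiples of the lowest-degree one, which by primality is itself irreducible; the second is the elementary but delicate analysis of when $\binom{t-d+n+1}{n+1}$, viewed as a polynomial in $t$, vanishes versus when it must be included to match $\dim_\kk (S/(F))_t$.
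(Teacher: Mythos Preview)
Your proposal is correct and follows essentially the same route as the paper: for (i) you observe that an $n$-dimensional irreducible closed subvariety of $\PP^n$ is $\PP^n$, and for (ii) you use that $Y_{n,d_{\cA}}\subset\PP^{n+1}$ is a hypersurface whose ideal is principal of degree $d$, then read off $\HF_{\cA}$ from the short exact sequence $0\to S(-d)\to S\to S/(F)\to 0$, invoking Khovanskii's leading-coefficient formula for the final equality. The paper's proof is identical in outline, though it simply asserts ``we get the claim'' from the exact sequence, whereas you supply the extra verification that the polynomial $\binom{t-d+n+1}{n+1}$ vanishes precisely for $t\in\{d-n-1,\dots,d-1\}$ and is nonzero at $t=d-n-2$, pinning down $n_0(\cA)=d-n-1$; this added care is welcome, not a divergence in method.
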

  
\begin{proof} (i) By hypothesis $\cA$ defines a rational map
$\psi :\PP^n  \dashrightarrow \PP^n$ and the  closure $Y_{n,d_{\cA}}$ of its image is an $n$-dimensional subvariety of $\PP^n$. Therefore, $Y_{n,d_{\cA}}=\PP^n$ and 
$$\varphi _{\cA}(t)=p_{\cA}(t)=\HF _R(t)=\binom{t+n}{n} \text{ for all }  t\ge 0.
$$

\noindent
(ii) In this case, $\cA$ defines a rational map
$ \psi :\PP^n  \dashrightarrow \PP^{n+1}$ and the closure $Y_{n,d_{\cA}}$ of its image is a hypersurface of degree $d$ of $\PP^{n+1}$ defined by $\I(Y_{n,d_{\cA}}) = (F)$. Using the exact sequence
$$ 0 \longrightarrow S(-d) \xrightarrow{\times F} S \longrightarrow S/\I(Y_{n,d_{\cA}})\longrightarrow 0,
$$ where $\times F: S(-d) \to S$ denotes the multiplication map by $F$ and $S=\kk[w_0,\hdots ,w_{n+1}]$, 
we get the claim. The last equality follows from the fact that in \cite{Khovanskii} and \cite{Lee} it is  established that if $\ZZ(\cA-\cA)$ has maximal rank, then the leading coefficient of the polynomial $p_{\cA}(t) = \varphi_{\cA}(t)$ is
$$
 \frac{\vol(\conv(\cA))}{[\ZZ^{n}:\ZZ(\cA-\cA)]}.   
$$
\end{proof}

\begin{remark}\rm
Notice that if $|\cA|=n+2$ and $\cA-\cA$ generates $\ZZ^{n}$
 we easily recover  \cite[Theorem 1.2]{Curran-Goldmakher}:  
\[\varphi_{\cA}(t) = \left\{\begin{array}{lll}
\binom{t+n+1}{n+1} & \text{if} & 0 \leq t \leq n!\vol(\conv(\cA)) - n-2\\[0.3cm]
\binom{t+n+1}{n+1} - \binom{t - n!\vol(\conv(\cA)) + n + 1}{n+1} & \text{if} & t \geq n!\vol(\conv(\cA)) - n-1.
\end{array}\right.\]
In particular, $n_0(\cA) = n!\vol(\conv(\cA)) - n-1$.
\end{remark}

To explicitly determine the function $\varphi_{\cA}(t)$, the coefficients of the  polynomial $p_{\cA}(t)$ and the phase transition $n_{0}(\cA)$ of $\varphi_{\cA}(t)$ for arbitrary finite subsets $\cA \subset \ZZ^{n}$ with more than $n+2$ elements is out of reach. In the remaining part of this section, we will focus our attention on the leading coefficient of the polynomial $p_{\cA}(t)$.

So far we have  a description of the degree $\deg(Y_{n,d_{\cA}})$ of $Y_{n,d_{\cA}}$ in terms of the set $\cA$  and the difference set $\cA-\cA$. By  \cite{Khovanskii} and \cite{Lee}, if $ \ZZ(\cA-\cA)$ has maximal rank, then
 \begin{equation}\label{Equation: Khovanskii lead term} \deg(Y_{n,d_{\cA}})=n!\frac{\vol(\conv(\cA))}{[\ZZ^n:\ZZ(\cA-\cA)]}.
 \end{equation}
On the other hand, since $Y_{n,d_{\cA}}$ is a toric variety, $\deg(Y_{n,d_{\cA}})$ can  also be described combinatorially as follows. From now on, given a finite subset $\cA \subset \ZZ^{n}$, we set 
\begin{equation}\label{Equation: clousure of A}
\overline{\cA} = \{(d_{\cA }-|a|,a_1,\hdots,a_n) \mid a \in \cA\}\subset \ZZ_{\geq 0}^{n+1}.
\end{equation}
We denote by $M$ the $(n+1) \times |\cA|$ matrix whose columns correspond to the points of $\overline{\cA}$. By \cite[Theorem 2.13 and 4.5]{Carroll-Planas-Villarreal}, we have: 
\begin{equation}\label{Equation: degree from lattice ideal}
\deg(Y_{n,d_{\cA}}) = \frac{r!\vol(\conv(\overline{\cA}))}{\Delta_{r}}
\end{equation}
where $r = \rk(M)$, $\vol(\conv(\overline{\cA}))$ is the volume of the convex hull of $\overline{\cA} \cup \{ 0\}$ and $\Delta_{r}$ is the greatest common divisor of all the non-zero $r \times r$ minors of $M$. 

\medskip
Expressions (\ref{Equation: Khovanskii lead term}) and (\ref{Equation: degree from lattice ideal}) provide two different ways to determine the degree of $Y_{n,d_{\cA}}$ in terms of subsets. Both involve the volume of convex polyhedrons and the {\em Smith normal form} of certain matrix (see, for instance, \cite{Carroll-Planas-Villarreal}). 
Let us see some examples where we compute the Hilbert function and polynomial and, hence, the degree of $Y_{n,d_{\cA}}$.

\begin{example}\label{Example: examples section 3} \rm (i) We take integers $n,d \geq 1$ and $\cA = \{(a_1,\hdots,a_n) \in \ZZ_{\geq 0}^{n} \mid a_1+\cdots+a_n\leq d\}.$ Then, $\overline{\cA} = \{(a_0,\hdots,a_n) \in \ZZ_{\geq 0}^{n+1} \mid a_0+\cdots +a_n = d\}$ and the associated monomial projection is the Veronese variety $X_{n,d}$. It is straightforward to see that $\rk(M) = n+1$ and $\vol(\conv(\overline{\cA})) = \frac{d^{n+1}}{(n+1)!}$. To compute $\Delta_{n+1}$ in this case, it is enough to find the {\em Smith normal form} of $M$ (see \cite{Carroll-Planas-Villarreal}). Notice that the transpose matrix $M^t$ of $M$ contains $n$ rows corresponding to $f_i = (d-1,\hdots, 1,\hdots, 0)$ with $1$ in position $i$th, $i = 1,\hdots,n$. Consider the submatrix $M_{a}$ of $M^t$ whose rows are $a,f_1,\hdots,f_n$ with $a = (a_0,\hdots,a_n) \in \overline{\cA} \setminus \{f_1,\hdots,f_n\}$.  Then, by doing the elementary row operation $a - a_1f_1 - \cdots - a_nf_n$, we can transform the row  $a$ as $(|a| - d(a_1+\cdots+a_n),0,\hdots,0)$. Since $|a|$ is a multiple of $d$ and, in particular, $(d,0,\hdots,0) \in \overline{\cA}$, we obtain that the Smith normal form of $M$ is $\diag(1,\hdots,1,d,0,\hdots,0)$. Therefore, $\Delta_{n+1} = d$ and we get $\deg(X_{n,d}) = d^{n}$. On the other hand, for $t \geq 0$ we have
\[\varphi_{\cA}(t) = p_{\cA}(t) = \binom{n+dt}{n},\]
so $n_0(\cA) = 0$ and the leading coefficient of $p_{\cA}(t)$ is $\frac{d^{n}}{n!}$.

\noindent (ii) We take $n = 2$ and $\cA = \{(0,0), (1,0), (0,1), (2,0), (0,2), (3,0), (2,1), (1,2), (0,3)\}$. Then, $|\cA| = 9$, $d_{\cA} = 3$ and the associated monomial projection $Y_{2,3} \subset \PP^{8}$ of the Veronese surface $X_{2,3} \subset \PP^{9}$ is the surface parameterized by 
$\Omega_{2,3} = \{x_0^3, x_0^2x_1, x_0^2x_2, x_0x_1^2, x_0x_2^2, x_1^3, x_1^2x_2, x_1x_2^2, x_2^3\}$.
We have $\overline{\cA} = \{(3,0,0), (2,1,0), (2,0,1), (1,2,0), (1,0,2), (0,3,0), (0,2,1), (0,1,2), (0,0,3)\}$. It is straightforward to check that 
$\vol(\conv(\overline{\cA})) = 27$ and, computing the maximal minors of $M$, we get $\Delta_{3} = 3$. By Proposition \ref{Equation: degree from lattice ideal}, $\deg(Y_{2,3}) = 9$. On the other hand, for $t \geq 2$ we have
\[\varphi_{\cA}(t) = p_{\cA}(t) = \frac{9t^2+9t+2}{2} = \binom{3t+2}{2},\]
so $n_0(\cA) = 2$ and the leading coefficient of $p_{\cA}(t)$ is $ \frac{9}{2}$.

\noindent (iii) We take $n = 2$ and $\cA = \{(0,0), (1,1), (3,0), (0,3)\}$. Then, $|\cA| = 4$, $d_{\cA} = 3$ and the associated monomial projection $Y_{2,3} \subset \PP^{3}$ of the Veronese surface $X_{2,3} \subset \PP^{9}$ is the surface parameterized by 
$\Omega_{2,3} = \{x_0^3, x_0x_1x_2, x_1^3, x_2^3\}$.
We have $\overline{\cA} = \{(3,0,0), (1,1,1), (0,3,0), (0,0,3)\}$. It is straightforward to check that 
$\vol(\conv(\cA)) = 27$ and, computing the maximal minors of $M$, we have $\Delta_{3} = 9$. By Proposition \ref{Equation: degree from lattice ideal}, $\deg(Y_{2,3}) = 3$.  On the other hand, for $t \geq 0$ we have 
\[\varphi_{\cA}(t) = p_{\cA}(t) = \frac{3t^2+3t+2}{2},\]
so $n_0(\cA) = 0$ and the leading coefficient of $p_{\cA}(t)$ is $\frac{3}{2}$.

\noindent (iv) We take $n = 2$ and $\cA = \{(2,2), (2,0), (1,2), (0,4)\}$. Then, $|\cA| = 4$, $d_{\cA} = 4$ and the associated monomial projection $Y_{2,4} \subset \PP^{3}$ of the Veronese surface $X_{2,4} \subset \PP^{9}$ is the surface parameterized by 
$\Omega_{2,4} = \{x_1^2x_2^2, x_0^2x_1^2, x_0x_1x_2^2, x_2^4\}$. We have $\overline{\cA} = \{(0,2,2), (2,2,0), (1,1,2), (0,0,4)\}$,  $\rk(M) = 3$,  $\vol(\conv(\overline{\cA})) = \frac{8}{3}$ and, computing the maximal minors of $M$, we get $\Delta_{3} = 8$. By Proposition \ref{Equation: degree from lattice ideal}, $\deg(Y_{2,4}) = \frac{3!\cdot 8}{3\cdot 8} = 2$. On the other hand, for $t \geq 0$ we have
\[\varphi_{\cA}(t) = p_{\cA}(t) = t^2+2t+1,\]
so $n_0(\cA) = 0$ and the leading coefficient of $p_{\cA}(t)$ is $1$. 
\end{example}

We end this section with a purely geometric approach to calculate the leading term of the polynomial $p_{\cA}(t)\in \Q[t]$ associated to any finite subset $\cA\subset \ZZ_{\ge 0}^n$. The result is based on the following observation:
we consider two finite subsets $\cA_1\subset\cA_2\subset \ZZ^n_{\ge 0}$ with $|\cA_2|=|\cA_1|+1$ and associated $n$-dimensional projective varieties $Y_{n,d_{\cA _1}}\subset \PP^{|\cA _1|-1}$ and $Y_{n,d_{\cA _2}}\subset \PP^{|\cA _2|-1}\cong \PP^{|\cA _1|}$. Notice that 
$Y_{n,d_{\cA _1}}$ is obtained projecting $Y_{n,d_{\cA _2}}$ from a point $p_{2,1}\in \PP^{|\cA _2|-1}$. Denote by $\pi _{2,1}: Y_{n,d_{\cA _2}} \longrightarrow Y_{n,d_{\cA _1}}$ the projection; it is a finite morphism of degree $\deg \pi _{2,1}= \# \pi _{2,1}^{-1}(x)$ where $x\in Y_{n,d_{\cA _1}}$ is a general point.
 By \cite[Pgs. 234-235, 259]{Ha}, we have:
\begin{equation}
\label{degree}
\deg \pi _{2,1} \cdot \deg Y_{n,d_{\cA _2}}=
\begin{cases} \deg Y_{n,d_{\cA _2}} &  \text{ if }   p_{2,1}\notin Y_{n,d_{\cA _1}} \\
\deg Y_{n,d_{\cA _2}}-1 &  \text{ if }   p_{2,1}\in Y_{n,d_{\cA _1}} \text{ is a smooth  point} \\
\deg Y_{n,d_{\cA _2}}-m_{2,1} &  \text{ if  }  p_{2,1}\in Y_{n,d_{\cA _1}} \text{ is a  point of multiplicity } m_{2,1}.
\end{cases}
\end{equation}

\noindent
Iterating this process we can compute the leading term of the polynomial $p_{\cA}(t)\in \Q[t]$ associated to any finite subset $\cA\subset \ZZ_{\ge 0}^n$. 
To this end, we need to fix some extra notation. Given any finite subset $\cA\subset \ZZ^n_{\ge 0}$ with $r:=|\cA|$,  we consider a chain
$$\cA=\cA_{0}\subset \cA_1\subset \cA_2\subset \cdots \subset  \cA_{N_{n,d}-r}=\cM_{n,d}\subset \ZZ^n_{\ge 0}$$
where $|\cA_{i}|=|\cA_{i-1}|+1$ and $\cM_{n,d}$ denotes the set of all monomials of degree $d$ in $R$. The $n$-dimensional rational projective variety $Y_{n,d_{\cA _{i-1}}}\subset \PP^{|\cA_{i-1}|-1}$ is obtained projecting $Y_{n,d_{\cA _{i}}}\subset \PP^{|\cA_{i}|-1}$ from a point $p_{i,i-1}$. Call $\pi _{i,i-1}$ such a projection. We define

$$
d_{i,i-1}= \begin{cases}
0 & \text{ if } p_{i,i-1}\notin Y_{n,d_{\cA _{i}}} \\
1 & \text{ if } p_{i,i-1}\notin Y_{n,d_{\cA _{i}}} \text{ is a smooth point} \\
m_{i,i-1} & \text{ if } p_{i,i-1}\notin Y_{n,d_{\cA _{i}}} \text{ is a point of multiplicity }m_{i,i-1}.
\end{cases}$$

\begin{proposition}\label{Proposition: geometric interpretation degree} Let $p_{\cA}(t)\in \Q[t]$ be the polynomial of a finite subset $\cA\subset \ZZ_{\ge 0}^n$  with associated $n$-dimensional projective $Y_{n,d_{\cA }}$ and let $a_n$ be its leading coefficient. With the above notation it holds:
$$
a_n= \frac{\deg Y_{n,d_{\cA }}}{n!}=\frac{1}{n! \prod _{i=1}^{N_{n,d}-r}\deg \pi _{i,i-1}} 
  \big (d^n-
\sum_{i=1}^{N_{n,d}-r}(m_{i,i-1}\prod _{j=i+1}^{N_{n,d}-r} \deg \pi _{j,j-1})\big ).$$

\end{proposition}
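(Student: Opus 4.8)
The plan is to set up a telescoping computation along the chain
$\cA=\cA_{0}\subset \cA_1\subset \cdots \subset \cA_{N_{n,d}-r}=\cM_{n,d}$,
using the degree formula \eqref{degree} at each step and the fact that the terminal variety $Y_{n,d_{\cA_{N_{n,d}-r}}}=X_{n,d}$ is the full Veronese variety of degree $d^n$. Since $p_{\cA}(t)$ is the Hilbert polynomial of the $n$-dimensional variety $Y_{n,d_{\cA}}$, its leading coefficient is $a_n=\deg(Y_{n,d_{\cA}})/n!$ by the definition of degree recalled after Proposition \ref{BasicHF}; so the whole statement reduces to proving the closed formula for $\deg(Y_{n,d_{\cA}})$.

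First I would record, from \eqref{degree}, the single-step recursion in the form
$$\deg Y_{n,d_{\cA_{i-1}}} \cdot \deg\pi_{i,i-1} \;=\; \deg Y_{n,d_{\cA_i}} \;-\; m_{i,i-1},$$
where $m_{i,i-1}$ is the multiplicity of $p_{i,i-1}$ on $Y_{n,d_{\cA_i}}$ when that point lies on the variety (with $m_{i,i-1}=0$ when $p_{i,i-1}\notin Y_{n,d_{\cA_i}}$, and $m_{i,i-1}=1$ in the smooth case), matching the definition of $d_{i,i-1}$. Solving for $\deg Y_{n,d_{\cA_{i-1}}}$ gives
$$\deg Y_{n,d_{\cA_{i-1}}}=\frac{\deg Y_{n,d_{\cA_i}}-m_{i,i-1}}{\deg\pi_{i,i-1}}.$$
Then I would iterate this from $i=1$ up to $i=N_{n,d}-r$, substituting $\deg Y_{n,d_{\cA_{N_{n,d}-r}}}=\deg X_{n,d}=d^n$ at the top. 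A straightforward induction on the number of remaining steps shows
$$\deg Y_{n,d_{\cA}} \;=\; \frac{1}{\prod_{i=1}^{N_{n,d}-r}\deg\pi_{i,i-1}}\left(d^n-\sum_{i=1}^{N_{n,d}-r}m_{i,i-1}\prod_{j=i+1}^{N_{n,d}-r}\deg\pi_{j,j-1}\right),$$
since peeling off the $i$-th projection divides the accumulated quantity by $\deg\pi_{i,i-1}$ and subtracts $m_{i,i-1}/\prod_{\ell\le i}\deg\pi_{\ell,\ell-1}$, and clearing the common denominator reorganizes the subtracted terms exactly into the stated sum (with the empty product $\prod_{j=N_{n,d}-r+1}^{N_{n,d}-r}\deg\pi_{j,j-1}=1$ for the $i=N_{n,d}-r$ term). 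Dividing by $n!$ yields the formula for $a_n$.

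The main obstacle is not the algebra — the telescoping is routine — but the geometric input needed to legitimately apply \eqref{degree} at every stage: one must know that each intermediate $Y_{n,d_{\cA_i}}$ is still $n$-dimensional (so that each $\pi_{i,i-1}$ is a genuine finite projection from a single point rather than a map that drops dimension) and that the projection center $p_{i,i-1}$ is a general enough point for the degree-of-projection count to hold. The cleanest way to guarantee this is to choose the chain so that every $\cA_i$ contains the $n+1$ "coordinate" exponent vectors $(d,0,\dots,0),\dots,(0,\dots,0,d)$ forcing $\ZZ(\cA_i-\cA_i)$ to have maximal rank, hence $\dim Y_{n,d_{\cA_i}}=n$ by Proposition \ref{Proposition: dimension varietat}; for the generality of $p_{i,i-1}$ one invokes \cite[Pgs.\ 234--235, 259]{Ha}, exactly as in the passage preceding the proposition, noting that the coordinate point one projects from is automatically in the right position for a monomial projection. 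I would then remark that the numbers $m_{i,i-1}$ and $\deg\pi_{i,i-1}$ depend on the chosen chain but the final product $\deg Y_{n,d_{\cA}}$ does not, which is what makes the identity meaningful.
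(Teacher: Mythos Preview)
Your argument is correct and is exactly the paper's approach: iterate the single-step degree relation \eqref{degree} along the chain, using $\deg X_{n,d}=d^n$ at the top; the paper's proof is the one-line ``It immediately follows from (\ref{degree}) taking into account that $\deg X_{n,d}=d^n$,'' and your telescoping just makes that explicit. One small correction to your obstacle paragraph: you cannot in general choose the chain so that every $\cA_i$ contains the coordinate exponent vectors, since $\cA_0=\cA$ need not; the right (and simpler) observation is that $\cA=\cA_0\subset\cA_i$ forces $\ZZ(\cA-\cA)\subset\ZZ(\cA_i-\cA_i)$, so maximal rank at $\cA$ already guarantees $\dim Y_{n,d_{\cA_i}}=n$ for every $i$.
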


\begin{proof}
It immediately follows from (\ref{degree}) taking into account that $\deg X_{n,d}=d^n$.
\end{proof}

Next example illustrates the above results.
\begin{example} \rm
We consider the finite set $\cA =\{ (0,0),(3,0),(2,0),(2,2),(0,1)\}\subset \ZZ^n_{\ge 0}$ and the associated rational projective surface $Y_{2,4}\subset \PP^4$, i.e.  the surface parameterized by $\Omega_{2,4} = \{x_0^4,x_0x_1^3,x_0^2x_1^2,x_1^2x_2^2,x_0^3x_2\}$. If we fix coordinates $w_0, \hdots, w_4$ in $\PP^4$, the ideal $\I(Y_{2,4})\subset S=\kk[w_0, \hdots, w_4]$ is generated by $w_2^3-w_0w_3^2,w_1^2w_2-w_0^2w_4,w_1^2w_3^2-w_0w_2^2w_4$ and it has a minimal graded free $S$-resolution:
$$
0\longrightarrow S(-5)^2 \xrightarrow{d_2} S(-3)^2 \oplus  S(-4)\xrightarrow{d_1} \I(Y_{2,4})\longrightarrow 0,
$$
where the graded $S-$maps $d_1$ and $d_2$ are associated to the matrices
\[\left(\begin{array}{lll}
    w_2^3-w_0w_3^2 &  w_1^2w_2-w_0^2w_4 & w_1^2w_3^2-w_0w_2^2w_4\\
\end{array}\right) \quad \text{and} \quad \left(\begin{array}{cc}
 -w_0w_3 & w_4^2\\
 w_1^2  & -w_2^2\\
 -w_2   & w_0\\
 \end{array}\right),\]
respectively. Therefore, we have
$$
\HP_{Y_{2,4}}(t)=\varphi _{\cA}(t)=|t\cA|= 4t^2-2t+3 \text{ for all } t\ge 1.
$$
In particular, $Y_{2,4}$ is a degree $8$ surface in $\PP^{4}$. The Example \ref{ex2} (iv) can be recovered from suitable projections of  $Y_{2,4}$. Indeed, we fix the points $p_0=(1:0:0:0:0), \hdots , p_4=(0:0:0:0:1)$ and we denote by $\pi _{i}:Y_{2,4} \longrightarrow \PP^3$ the projection of $Y_{2,4}$ to $\PP^3$ from the point $p_{i}$, $i=0,\cdots ,4$. It holds $\pi _{2}(Y_{2,4})=Y_{2,4}^1$ and $\pi _{3}(Y_{2,4})=Y_{2,4}^2$. By the above result, since $p_2\notin Y_{2,4}$, $p_3\in Y_{2,4}$ is a double point, $\deg \pi _2=1$ and $\deg \pi _3=2$,  we obtain: $$\deg(Y_{2,4})=\deg(Y_{2,4}^1)=8 \text{ and } \deg Y_{2,4}^2=\frac{\deg (Y_{2,4})-2}{2}=3.$$
The result fits well with our previous calculations (see Examples \ref{firstexample} and \ref{ex2}) and we have:
$$
\HP_{Y_{2,4}^1}(t)=\varphi _{\cA^1}(t)=|t\cA ^1|=4t^2-16t+36 \text{ for all } t\ge 5
$$
and
$$
\HP_{Y_{2,4}^2}(t)=\varphi _{\cA ^2}(t)=|t\cA ^2|=\frac{3}{2}(t^2+t)+1  \text{ for all } t\ge 0.
$$
being $\cA^1=\{ (0,0),(3,0),(2,2),(0,1)\}, \cA^2=\{ (0,0),(2,0),(2,2),(0,1)\}\subset \ZZ^n_{\ge 0}$ the finite sets associated to $Y_{2,4}^1$ and $Y_{2,4}^2$, respectively.

We end the example with one more computation to show the casuistry we can have. Since $\pi _0:Y_{2,4} \longrightarrow \PP^3$ is a finite morphism of degree 1 and $p_0\in Y_{2,4}$ a double point, we get that $Y_{2,4}^0:=\pi _{0}(Y_{2,4})\subset \PP^3$ is a surface of degree 6 parameterized by  $\{x_0x_1^3, x_0^2x_1^2, x_1^2x_2^2, x_0^3x_2\}$ and the associated finite set $\cA ^0=\{(3,0),(2,0),(2,2),(0,1)\}\subset \ZZ^n_{\ge 0}$ satisfies:
$$
\HP_{Y_{2,4}^0}(t)=\varphi _{\cA ^0}(t)=|t\cA ^0|=3t^2-6t+11  \text{ for all } t\ge 3.
$$
\end{example}


\section{Bounds for the phase transition}
\label{sec-phase}
In Section \ref{Section: Additive number theory and projective geometry},  we have studied the leading coefficient of $p_{\cA}(t)$ and we have provided combinatorial and geometric interpretations of it. 
Notwithstanding, for arbitrary $n \geq 1$ and $\cA \subset \ZZ^{n}$, the phase transition $n_{0}(\cA)$ and the polynomial $p_{\cA}(t)$ are barely known. We will use the Castelnuovo--Mumford regularity $\reg(\cA)$ of $A(Y_{n,d_{\cA}})$ to derive new bounds for $n_0(\cA)$ under some technical assumptions on  $\cA$. 

Following \cite{Granville-Shakan-Walker} and \cite{Curran-Goldmakher}, we gather a series of known bounds for the phase transition in the following setting: 
 finite subsets $\cA \subset \ZZ^{n}_{\ge 0}$  whose convex hull $\conv(\cA)$  is an $n-$simplex. In this case, the bounds on  the phase transition $n_{0}(\cA)$ are expressed in terms of $n,|\cA|, \vol(\conv(\cA))$ and a constant $K(\cA,B)$, which depends further on $\cA$ and its associated semigroup $\cH(\cA) \subset \ZZ^n_{\ge 0}$.  

\begin{definition}\label{Definition: A with n-simplex}Let $\cA \subset \ZZ^{n}_{\ge 0}$ be a finite subset. The convex hull $\conv(\cA)$ of $\cA$ is an $n-$simplex if there is a subset $B = \{v_1,\hdots,v_{n+1}\}\subset \cA$ of $n+1$ elements such that the difference set $B-B$ generates $\RR^{n}$ and $\conv(\cA) = \conv(B)$. 
\end{definition}

\begin{theorem}\label{Theorem: effectives khovanskii} Let $\cA \subset \ZZ^{n}_{\ge 0}$ be a finite subset with $\conv(\cA)$ an $n-$simplex. We  have:
\[ 
n_0(\cA) \leq (n+1)(n!\frac{\vol(\conv(\cA))}{[\ZZ^n:\ZZ(\cA-\cA)]} - |\cA|+n) + 1.
\]
If in addition $\ZZ(\cA - \cA) = \ZZ^n$, then  
\[
n_0(\cA) \leq (n+1)!\vol(\conv(\cA))-\max\{ 3n+1, (n+1)(|\cA|-n) - 1\}.
\]
\end{theorem}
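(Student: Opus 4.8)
The plan is to translate everything into the language of the monomial projection $Y_{n,d_{\cA}}$ via Proposition~\ref{BasicHF}, so that $n_0(\cA)$ becomes the regularity of the Hilbert function of $A(Y_{n,d_{\cA}})$, and hence is bounded by $\reg(A(Y_{n,d_{\cA}}))+1$. Since $\conv(\cA)$ is an $n$-simplex, the semigroup ring $\kk[\Omega_{n,d_{\cA}}] \cong A(Y_{n,d_{\cA}})$ is the coordinate ring of a simplicial affine semigroup: the subset $B = \{v_1,\dots,v_{n+1}\}\subset\cA$ corresponds to a system of parameters given by $n+1$ monomials $m_{v_1},\dots,m_{v_{n+1}}\in\Omega_{n,d_{\cA}}$ whose images generate a polynomial subalgebra over which $A(Y_{n,d_{\cA}})$ is a finite module. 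First I would make this reduction explicit and record that $\dim A(Y_{n,d_{\cA}}) = n+1$ (equivalently $\dim Y_{n,d_{\cA}} = n$), which holds by Proposition~\ref{Proposition: dimension varietat} because $\ZZ(\cA-\cA)$ has maximal rank when $\conv(\cA)$ is an $n$-simplex.

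Next I would invoke the known regularity bounds for simplicial affine semigroup rings, which is exactly the content of \cite{Hoa-Stuckrad} (and \cite{Herzog-Hibi}). The first inequality should come from the general Hoa--St\"uckrad-type estimate: for a simplicial affine semigroup ring that is a finite module over a polynomial ring in $n+1$ variables generated in a single degree (here degree $d_{\cA}$, after rescaling so that $\deg w_i = 1$), one has $\reg \le (n+1)(e - c)$ or a closely related expression, where $e$ is a multiplicity-type quantity and $c$ accounts for the embedding dimension. Here $e$ should be identified with $n!\,\vol(\conv(\cA))/[\ZZ^n:\ZZ(\cA-\cA)] = \deg(Y_{n,d_{\cA}})$ using Proposition~\ref{Proposition: cases n+1 and n+2} / \eqref{Equation: Khovanskii lead term}, and the correction term involves $|\cA|$ (the number of monomials, i.e.\ the embedding dimension $+1$) and $n$. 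Feeding $\reg(\cA) \le (n+1)(n!\vol/[\ZZ^n:\ZZ(\cA-\cA)] - |\cA| + n)$ into $n_0(\cA)\le\reg(\cA)+1$ gives the first claim. For the second claim, under the extra hypothesis $\ZZ(\cA-\cA)=\ZZ^n$ the index drops out, $\deg(Y_{n,d_{\cA}}) = n!\vol(\conv(\cA))$, and one uses the sharper version of the Hoa--St\"uckrad bound available in the normal/standard-graded-after-Veronese setting, which replaces one factor and yields the $\max\{3n+1,(n+1)(|\cA|-n)-1\}$ subtracted term; the two terms inside the max correspond respectively to the a priori lower bound on regularity of a variety of that degree and codimension (Eisenbud--Goto-type) and to the combinatorial count coming from the semigroup generators.

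The main obstacle I anticipate is matching the precise constants in the cited regularity theorems to the quantities $n,|\cA|,\vol(\conv(\cA))$ and the index $[\ZZ^n:\ZZ(\cA-\cA)]$: the results of \cite{Hoa-Stuckrad} are phrased for simplicial affine semigroups with their own notation (relative interior, the parameter monomials, the "holes" of the semigroup), and one must check that $\deg(Y_{n,d_{\cA}})$ computed via \eqref{Equation: Khovanskii lead term} or \eqref{Equation: degree from lattice ideal} really is the multiplicity appearing there, and that the number of semigroup generators in the relevant bound is $|\cA|$ and not, say, a number of "defining" generators. A secondary subtlety is the passage from the grading where $\deg w_i = d_{\cA}$ (inherited from $\Omega_{n,d_{\cA}}\subset R_{d_{\cA}}$) to the standard grading where $\deg w_i = 1$; since $\HF_{\cA}(t) = \dim_\kk \kk[\Omega_{n,d_{\cA}}]_{td_{\cA}}$, this is just a Veronese rescaling and does not affect $\reg$ once one works with $A(Y_{n,d_{\cA}})$ directly, but it needs to be stated so the reader does not worry about a factor of $d_{\cA}$. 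Everything else — the reduction $n_0(\cA)\le\reg(\cA)+1$, the identification of the semigroup as simplicial, and the degree formula — is already available from the excerpt.
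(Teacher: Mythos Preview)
Your plan misreads the role of this theorem in the paper. Theorem~\ref{Theorem: effectives khovanskii} is not proved in the paper at all: the proof is simply ``See \cite[Theorem 1.4]{Granville-Shakan-Walker} and \cite[Theorem 1.4]{Curran-Goldmakher}.'' These are combinatorial sumset results, obtained by analysing the structure of $t\cA$ directly (via $B$-minimal elements, fundamental domains for $\ZZ(B)$, etc.), not by Castelnuovo--Mumford regularity. The paper quotes them precisely in order to have a benchmark against which to compare its own regularity-based bounds (Theorems~\ref{Theorem: Bounds reg Hoa-Stuckrad}, \ref{Theorem: Bounds reg Hoa-Stuckrad-continuacio}, \ref{Theorem: Bounds reg Hoa-Stuckrad-reduction number}); see the Remark following Theorem~\ref{Theorem: Bounds reg Hoa-Stuckrad-continuacio}, which computes exactly the $n-1$ gap between the two approaches.

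Consequently, your strategy --- reduce to $n_0(\cA)\le\reg(\cA)+1$ and invoke Hoa--St\"uckrad --- is essentially the paper's method for its \emph{own} theorems, transplanted to the wrong statement. Two concrete obstructions: (i) the Hoa--St\"uckrad bounds the paper uses are stated under the extra hypothesis $\{0,v_1,\dots,v_n\}\subset\cA$ (axis-aligned simplex of side $d_{\cA}$), whereas Theorem~\ref{Theorem: effectives khovanskii} allows an arbitrary $n$-simplex $B\subset\cA$; you would first need to argue that a general simplicial affine semigroup still admits these bounds with the same constants, and that is not in the paper. (ii) The second inequality's subtracted term $\max\{3n+1,(n+1)(|\cA|-n)-1\}$ is specific to the Curran--Goldmakher argument and does not drop out of any Hoa--St\"uckrad-type regularity estimate; you would not recover it by your route. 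Your own caveat about ``matching the precise constants'' is therefore the heart of the matter, not a side issue.
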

\begin{proof}
See \cite[Theorem 1.4]{Granville-Shakan-Walker} and \cite[Theorem 1.4]{Curran-Goldmakher}. 
\end{proof}

Assume now that $\cA \subset \ZZ^{n}_{\ge 0}$ is a finite subset such that $0 \in \cA$ and $\conv(\cA)$ is an $n-$simplex. We denote by $\ZZ(B) \subset \ZZ^{n}$ the subgroup generated by $B$ and we set $\Pi_{B} := \{\sum_{i=1}^{n+1} \lambda_iv_i \mid 0 \leq \lambda_i < 1\}$. Given $a \in \cH(\cA) = \cup_{t \geq 0}(t\cA)$, the {\em height} of $a$ is the minimum of the integers $t \geq 0$ such that $a \in t\cA$, we denote it by $N_{a}$. The class of $a$ modulo $\ZZ(B)$ can be represented by an element of $\pi_{a} \in \Pi_{B}$ and we denote by $S_{\pi}$ the set of elements of $\cH(\cA)$ which are congruent to $\pi_{a}$ modulo $\ZZ(B)$. An element $a \in S_{\pi}$ is said to be {\em $B-$minimal} if $a - v_{i} \notin \cH(\cA)$ for any $i = 1,\hdots,n+1$. The set of all $B-$minimal elements of $\cH(\cA)$ is denoted by $\cS(\cA,B)$. Keeping this notation:

\begin{proposition}{\cite[Theorem 4.1]{Granville-Shakan-Walker}} \label{Proposition: boudn K(A,B)}  If $\cS(\cA,B)$ is finite, then
\[
n_0(\cA) \leq (n+1)(K(\cA,B)-1)+1, 
\] 
where $\displaystyle{K(\cA,B) := \max_{a \in \cS(\cA,B)} N_{a}}$. 
\end{proposition}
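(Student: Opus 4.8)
The plan is to translate the combinatorial statement into a bound on the Castelnuovo--Mumford regularity of $A(Y_{n,d_{\cA}})$, using the dictionary $n_0(\cA) \le \reg(\cA)+1$ established after Proposition \ref{BasicHF}, and then to read off the regularity from the $B$-minimal decomposition of the semigroup $\cH(\cA)$. First I would recall that since $\conv(\cA)$ is an $n$-simplex, the subring $\kk[x_0^{d_\cA - |v_i|}x_1^{v_{i,1}}\cdots x_n^{v_{i,n}} : v_i \in B] \subset \kk[\Omega_{n,d_\cA}]$ is a polynomial ring on $n+1$ variables (the monomials corresponding to $B$ are a regular sequence because $B-B$ spans $\RR^n$), so it is a Noether normalization: $A(Y_{n,d_\cA})$ is a finitely generated module over a polynomial ring $P = \kk[y_1,\dots,y_{n+1}]$ with $\deg y_i = 1$. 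The key observation is that the decomposition $\cH(\cA) = \bigsqcup_\pi S_\pi$ into congruence classes modulo $\ZZ(B)$, together with the notion of $B$-minimal element, is exactly a combinatorial description of a generating set of $A(Y_{n,d_\cA})$ as a $P$-module: an element $a \in S_\pi$ of height $N_a$ contributes a module generator sitting in degree $N_a$, and every element of $S_\pi$ is obtained from some $B$-minimal element by adding a nonnegative combination of the $v_i$'s, i.e. by multiplying by a monomial in $P$.

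The main steps, in order, are: (1) verify that $\cS(\cA,B)$ is a (finite, by hypothesis) generating set of $A(Y_{n,d_\cA})$ as a graded $P$-module, and that the generator attached to $a \in \cS(\cA,B)$ lies in internal degree $N_a$ (when one identifies degree-$t$ pieces $A_t = \kk[\Omega]_{td_\cA}$ with $t\cA$ via Proposition \ref{BasicHF}(1)); (2) invoke the standard fact that if a graded module $M$ over a polynomial ring $P$ is generated in degrees $\le D$, then $\reg(M) \le D$ provided $M$ is Cohen--Macaulay over $P$ — or more robustly, that the Hilbert function of $M$ agrees with its Hilbert polynomial from degree $D$ on when $M$ is a free-ish module, which for a semigroup ring that is a free module over its Noether normalization is automatic; (3) conclude $\reg(\cA) \le K(\cA,B)$ — actually one needs the slightly finer statement that for a module free over $P$ with generators in degrees $\le D$, the Hilbert function equals the Hilbert polynomial for $t \ge D$ — hence $n_0(\cA) \le K(\cA,B)$. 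To get the stated bound $n_0(\cA) \le (n+1)(K(\cA,B)-1)+1$ rather than just $\le K(\cA,B)$, I would not go through regularity directly but instead follow the argument of \cite{Granville-Shakan-Walker}: one shows that each $S_\pi$, as a translate of a finitely generated $\NN(B)$-submodule with generators of height $\le K(\cA,B)$, has the property that $|S_\pi \cap (t\cA)|$ is polynomial in $t$ for $t \ge (n+1)(K(\cA,B)-1)+1$, because an element of height $> (n+1)(K(\cA,B)-1)$ in class $\pi$ necessarily has a presentation through a $B$-minimal element leaving at least $(n+1)(K(\cA,B)-1)$ units of "room", which forces the recursion for the lattice-point count in a simplex to have already stabilized; summing over the finitely many classes $\pi$ gives the claim.

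The hard part will be step (1): making precise and correct the claim that $\cS(\cA,B)$ generates $A(Y_{n,d_\cA})$ over $P$ with the height as the internal degree. The subtlety is that the grading on $A(Y_{n,d_\cA})$ is by $t$ (powers of $d_\cA$), whereas the semigroup $\cH(\cA)$ is graded by height, and one must check these match: an element $a \in t\cA$ with $a \notin (t-1)\cA$ corresponds to a monomial in $\kk[\Omega]$ of degree $td_\cA$ that is not in $\kk[\Omega]_{(t-1)d_\cA}\cdot R_{d_\cA}$... except this is false as stated, since $\kk[\Omega]_{td_\cA}$ is the span of \emph{all} products of $t$ of the generating monomials, and an element can have several heights-worth of representations; the correct statement is that $N_a$ equals the smallest $t$ with $a \in t\cA$, and the module-generator in degree $N_a$ is well-defined precisely because adding any $v_i$ (i.e. multiplying by the corresponding generator of $P$) increases height by exactly $1$ on $B$-minimal elements — this uses that $B$ spans a full-rank sublattice so heights along $\ZZ(B)$-cosets behave linearly. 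I would isolate this as a lemma and prove it by the same kind of linear-algebra-over-$\Pi_B$ bookkeeping used in the definition of $\cS(\cA,B)$, after which the counting in step (3) is routine.
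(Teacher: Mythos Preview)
The paper does not prove this proposition at all; it is stated as a direct citation of \cite[Theorem~4.1]{Granville-Shakan-Walker}, with no accompanying argument. So there is no proof in the paper to compare your proposal against.

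That said, your proposal contains a genuine gap. Your regularity route (1)--(3) would yield $n_0(\cA)\le K(\cA,B)$, which is \emph{stronger} than the stated bound $(n+1)(K(\cA,B)-1)+1$ (your phrasing ``rather than just $\le K(\cA,B)$'' suggests some confusion on this point). The step where you pass from ``$\cS(\cA,B)$ generates $A(Y_{n,d_\cA})$ over $P$ in degrees $\le K(\cA,B)$'' to ``Hilbert function equals Hilbert polynomial from degree $K(\cA,B)$ on'' explicitly invokes freeness of $A(Y_{n,d_\cA})$ over its Noether normalization $P$, i.e.\ Cohen--Macaulayness of the semigroup ring. The proposition carries no such hypothesis. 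Without it, $\cS(\cA,B)$ still generates $A(Y_{n,d_\cA})$ as a $P$-module, but there are non-trivial syzygies among these generators, and both $\reg(\cA)$ and $n_0(\cA)$ can exceed $K(\cA,B)$. The factor $(n+1)$ in the stated bound is precisely what absorbs the possible failure of Cohen--Macaulayness. Compare the paper's later Theorem~\ref{Theorem: Bounds reg Hoa-Stuckrad-reduction number}: under the additional simplicial hypothesis $\{0,v_1,\dots,v_n\}\subset\cA$ one can invoke the regularity estimates of \cite{Hoa-Stuckrad} to sharpen the bound to $(n+1)(r(\cA)-1)-n+2$ with $r(\cA)=K(\cA,B)$, but the factor $(n+1)$ does not disappear in general.

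Your fallback plan, ``follow the argument of \cite{Granville-Shakan-Walker},'' is of course correct by fiat and is exactly what the paper does by citation; but the one-sentence sketch you give (``forces the recursion for the lattice-point count in a simplex to have already stabilized'') hides the entire substance of that argument and would not stand on its own as a proof. The ``hard part'' you isolate at the end---matching heights to internal degrees---is fine, but it only gets you the module-generation statement, not the bound.
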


In next example, we compute the bounds given in Theorem \ref{Theorem: effectives khovanskii}  and Proposition \ref{Proposition: boudn K(A,B)} and we show that both bounds are far from the real value of $n_0(\cA)$.

\begin{example}\label{Example: example section 4} \rm Let $2 \leq n < d$ and $0 \leq \alpha_1 \leq \cdots \leq \alpha_n < d$ be integers such that $\GCD(\alpha_1,\hdots, \alpha_n,d) = 1$. Set 
\[\cA = \{(a_1,\hdots, a_n) \in \ZZ_{\geq 0}^{n} \mid a_1 + \cdots + a_n \leq d \; \text{and} \; \alpha_1a_1 + \cdots + \alpha_na_n \equiv 0 \mod d\}.\]
We have that $d_{\cA} = d$ and $\cA$ contains $0$ and the elements $de_i := (0,\hdots, d, \hdots, 0)$ with $d$ in position $i$th, $i = 1,\hdots, n$. Then, $\cA$ fulfils the hypothesis of Proposition \ref{Proposition: boudn K(A,B)} with $B = \{0,de_1,\hdots,de_n\}$ and $\vol(\conv(\cA)) = \frac{d^{n}}{n!}$. Moreover, in \cite{CMM-R} it is proved that the set $\overline{\cA}$ generates the semigroup 
\[H = \{(a_0,\hdots, a_n) \in \ZZ^{n+1}_{\geq 0} \mid a_0 + \cdots + a_n \equiv 0\! \mod d \; \text{and} \; \alpha_1a_1 + \cdots + \alpha_na_n \equiv 0 \mod d\}\]
Take $a \in \cH(\cA)$. If $a = (a_1,\hdots, a_n) - de_i \in \ZZ_{\geq 0}^{n}$, then $a-de_i \in \cH(\cA)$. Therefore, $\cS(\cA,B) = \{a = (a_1,\hdots, a_n) \in \cA \mid a_i < d, i = 1,\hdots, n\}$ and, hence, $K(\cA,B) \leq n$. Applying Proposition \ref{Proposition: boudn K(A,B)}, we obtain that $n_0(\cA) \leq (n+1)(n-1) + 1$. As a particular example, we take $n = 2$, $d = 5$, $\alpha_1 = 1$ and $\alpha_2 = 2$. Then, we have 
\[\cA = \{(0,0), (5,0), (3,1), (1,2), (0,5)\},\]
$d_{\cA} = 5$, $|\cA| = 5$, $B = \{(0,0), (5,0), (0,5)\}$, $\vol(\conv(\cA)) = \frac{25}{2}$ and $K(\cA,B) = 2$. Notice that the subgroup $\ZZ(\cA-\cA)$ does not coincide with $\ZZ^{2}$. The bound for the phase transition of $\varphi_{\cA}(t)$ from Theorem \ref{Theorem: effectives khovanskii} is $n_{0}(\cA) \leq 5$; in contrast to the lower one $n_0(\cA) \leq 4$ from Proposition \ref{Proposition: boudn K(A,B)}. Notwithstanding, we have that 
\[\varphi_{\cA}(t) = p_{\cA}(t) = \frac{5t^2+3t+2}{2} \; \text{for all} \; t \geq 0\]
(\cite[Theorem 4.12]{CMM-R}), so the phase transition of $\cA$ is $n_0(\cA) = 0$. 
\end{example}

Next, we establish new bounds for the phase transition using the geometric approach we have developed in Section \ref{Section: Additive number theory and projective geometry}.  
The first bounds we provide for $n_0(\cA)$ are expressed in terms of $n, |\cA|$ and  $\deg(Y_{n,d_{\cA}})$, and they can be easily compared with the previous ones using (\ref{Equation: Khovanskii lead term}). The last bound we give is based on the reduction number $r(\cA)$ of $\kk[Y_{n,d_{\cA}}]$ which, as the constant $K(\cA,B)$, depends further on the subset $\overline{\cA}$. 

Through the rest of this section, we consider subsets $\cA \subset \ZZ_{\geq 0}^{n}$ such that $\overline{\cA} \subset \ZZ^{n+1}_{\geq 0}$ generates a {\em simplicial} affine semigroup $\cH(\overline{\cA})$, i.e. $e_0 := (d_{\cA}, 0, \hdots, 0), \hdots, e_n := (0,\hdots,0, d_{\cA}) \in \overline{\cA} \subset \ZZ_{\geq 0}^{n+1}$. The homogeneous coordinate ring $A(Y_{n,d})$ of the monomial projection $Y_{n,d}$ associated to $\cA$ is isomorphic to $\kk[\Omega_{n,d_{\cA}}]$,  which is the simplicial semigroup ring associated to the affine semigrop $\cH(\overline{\cA}) \subset \ZZ_{\geq 0}^{n+1}$. This is equivalent to consider subsets $\cA \subset \ZZ_{\geq 0}^{n}$ containing the origin $0 \in \ZZ^{n}$ and the elements, which we fix in the sequel, $v_1 := (d_{\cA},0,\hdots,0), \hdots, v_n := (0,\hdots,0,d_{\cA}) \in \cA \subset \ZZ_{\geq 0}^{n}$. The convex hull $\conv(\cA)$ is an $n-$simplex with vertexes $0, v_1,\hdots, v_n$ and $\vol(\conv(\cA)) = \frac{d_{\cA}^{n}}{n!}$. In particular, $\ZZ(\cA-\cA)$ has maximal rank, hence ((\ref{Equation: Khovanskii lead term}) and (\ref{Equation: degree from lattice ideal})):  
\[\deg(Y_{n,d_{\cA}}) = n!\frac{\vol(\conv(\cA))}{[\ZZ^n : \ZZ(\cA-\cA)]} = 
(n+1)!\frac{\vol(\conv(\overline{\cA}))}{\Delta_{n+1}} = \frac{d_{\cA}^n}{[\ZZ^n : \ZZ(\cA-\cA)]}.\]  We have:
 
\begin{theorem}\label{Theorem: Bounds reg Hoa-Stuckrad} Let $\cA \subset \ZZ_{\geq 0}^{n}$ be a finite subset such that $\{0,v_1,\hdots,v_n\} \subset \cA$. If one of the following conditions yields,
   \begin{itemize}
       \item[(i)] $n = 1$,
       \item[(ii)]  $\kk[\Omega_{n,d_{\cA}}]$ is a Cohen-Macaulay ring,
       \item[(iii)] $\deg(Y_{n,d_{\cA}}) \leq |\cA| - n$, 
       \item[(iv)] $|\cA| - n - 1 \leq \deg(Y_{n,d_{\cA}})/d_{\cA}$ or
       \item[(v)] $\deg(Y_{n,d_{\cA}}) = d_{\cA}^{n}$ and $d_{\cA} \leq n$, 
   \end{itemize} 
then 
\[n_0(\cA)  \leq \deg(Y_{n,d_{\cA}}) - |\cA| + n + 2.\]
If $Y_{n,d_{\cA}}$ is a smooth variety, then we have further
\[n_0(\cA)  \leq \min\{n(d_{\cA}-2)+1, \deg(Y_{n,d_{\cA}}) - |\cA| + n + 2\}.\]
\end{theorem}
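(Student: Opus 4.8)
The plan is to reduce the theorem to a statement about the Castelnuovo--Mumford regularity of the semigroup ring $\kk[\Omega_{n,d_{\cA}}]=A(Y_{n,d_{\cA}})$, using the bound $n_0(\cA)\le\reg(\cA)+1$ established in Section \ref{Section: Additive number theory and projective geometry}. Since we are assuming $\{0,v_1,\hdots,v_n\}\subset\cA$, the ring $\kk[\Omega_{n,d_{\cA}}]$ is a simplicial affine semigroup ring, so the results of Hoa--St\"uckrad (and Herzog--Hibi) on the regularity of such rings apply directly. The key point is that in each of the cases (i)--(v), the regularity is governed by the invariant $\deg(Y_{n,d_{\cA}})-|\cA|+n+1$: concretely, I would invoke the Hoa--St\"uckrad estimate $\reg(\kk[\Omega_{n,d_{\cA}}])\le \deg(Y_{n,d_{\cA}})-\codim(Y_{n,d_{\cA}})+1 = \deg(Y_{n,d_{\cA}})-(|\cA|-1-n)+1 = \deg(Y_{n,d_{\cA}})-|\cA|+n+2$, which is exactly the Eisenbud--Goto-type bound that is known to hold for the relevant classes. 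Adding $1$ via $n_0(\cA)\le\reg(\cA)+1$ would then give a bound off by one, so the sharper statement actually requires the Eisenbud--Goto bound for $\reg(\cA)$ itself (not $\reg(\cA)+1$); I would be careful here and cite the precise normalization in \cite{Hoa-Stuckrad} and \cite{Herzog-Hibi}.

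For the individual cases: case (i), $n=1$, is the rational curve case where $\reg$ of a semigroup ring in one variable (a numerical semigroup setting after homogenization) is classical and the Eisenbud--Goto bound is elementary. Case (ii), Cohen--Macaulayness, is the historically first case where Eisenbud--Goto was proven (indeed it holds for all Cohen--Macaulay graded domains), so $\reg(\cA)\le\deg-\codim+1$ is immediate. Cases (iii)--(v) are the ``small degree'' or ``small codimension'' regimes: when $\deg(Y_{n,d_{\cA}})\le|\cA|-n = \codim(Y_{n,d_{\cA}})+1$, i.e. the variety has minimal or almost-minimal degree, the Eisenbud--Goto bound (in fact the stronger results of Nagel and others on varieties of almost minimal degree, cf. the reference \cite{Nit} in the excerpt) gives the conclusion; case (iv) rephrases a condition in terms of $\deg/d_{\cA}$ which for simplicial semigroup rings controls the number of generators relative to the ``length'' in each coordinate direction; and case (v), $\deg=d_{\cA}^n$ with $d_{\cA}\le n$, forces $[\ZZ^n:\ZZ(\cA-\cA)]=1$ and a bounded degree of the defining map, again landing in a range where the Hoa--St\"uckrad combinatorial bound on $\reg$ of simplicial semigroup rings yields the estimate.

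For the smoothness addendum, the plan is to combine the general bound just obtained with a separate bound coming from the fact that $Y_{n,d_{\cA}}$, when smooth, is a smooth monomial projection of the Veronese $X_{n,d_{\cA}}$. Here I would use that the regularity of a smooth projective variety parameterized by forms of degree $d_{\cA}$ satisfies $\reg\le n(d_{\cA}-2)+2$ — this is the classical consequence of the regularity of the Veronese and the behavior of regularity under (smooth, finite) linear projections, essentially a Mumford-type or Bertram--Ein--Lazarsfeld-type estimate for varieties cut out by a linear system that is a sublinear system of $|\cO(d_{\cA})|$ — so that $n_0(\cA)\le\reg(\cA)+1\le n(d_{\cA}-2)+1$. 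Taking the minimum of the two bounds gives the stated inequality.

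The main obstacle I anticipate is bookkeeping around the off-by-one in the regularity conventions: the excerpt explicitly warns that the Gotzmann-expansion bound $\reg(A)\le s$ ``is far to be optimal'' and that the Eisenbud--Goto bound ``has been disproved (\cite{MacPee}), it is true for several types of rings covering some of the considered in this paper,'' so the real content of the proof is verifying that each of (i)--(v) does fall into a class for which Eisenbud--Goto (or a sufficient substitute from \cite{Hoa-Stuckrad,Herzog-Hibi,Nit}) is a theorem, and then tracking $\codim(Y_{n,d_{\cA}})=|\cA|-1-n$ through the algebra carefully. The smoothness bound $n(d_{\cA}-2)+1$ will also require pinning down the exact projection-regularity lemma being used; I would state it as a cited lemma rather than reprove it.
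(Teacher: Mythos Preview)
Your proposal is correct and takes essentially the same approach as the paper: the proof there simply observes that under each of (i)--(v) the Eisenbud--Goto bound on $\reg(\cA)$ holds by \cite[Corollary 3.6 and Proposition 3.7]{Hoa-Stuckrad} and \cite[Theorem 1.1 and Corollary 1.3]{Herzog-Hibi}, and then applies $n_0(\cA)\le\reg(\cA)+1$. One small correction: the smoothness bound $n(d_{\cA}-2)+1$ is not obtained via a Bertram--Ein--Lazarsfeld projection argument but directly from Herzog--Hibi's regularity estimate for simplicial semigroup rings with isolated singularity, so you should cite that rather than a general smooth-variety result.
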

\begin{proof} Under the hypothesis of the statement, the Eisenbud-Goto conjecture for the Castelnuovo-Mumford regularity $\reg(\cA)$ of $\kk[\Omega_{n,d_{\cA}}]$ holds
(see \cite[Corollary 3.6 and Proposition 3.7]{Hoa-Stuckrad} and \cite[Theorem 1.1 and Corollary 1.3]{Herzog-Hibi}). Therefore, the proof now follows from the general fact:
$$n_0(\cA)\le \reg(\cA)+1.
$$
\end{proof}

In general, 
\begin{theorem}\label{Theorem: Bounds reg Hoa-Stuckrad-continuacio}  Let $\cA \subset \ZZ_{\geq 0}^{n}$ be a finite subset such that $\{0,v_1,\hdots,v_n\} \subset \cA$. Then, 
\[n_0(\cA) \leq (d_{\cA}-1)(|\cA|-n-1)+1.\]
Moreover, if $\deg(Y_{n,d_{\cA}}) \geq |\cA| - n + 1$, then
    \[n_0(\cA) \leq \min\{(n+1)(\deg(Y_{n,d_{\cA}})-|\cA|+n-1)+3, (d_{\cA}-1)(|\cA|-n-1)+1\}.\]
\end{theorem}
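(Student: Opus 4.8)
The plan is to combine the inequality $n_0(\cA)\le \reg(\cA)+1$ recorded above with two upper bounds on the Castelnuovo--Mumford regularity $\reg(\cA)=\reg(\kk[\Omega_{n,d_{\cA}}])$ that exploit the simplicial structure of the semigroup ring. By hypothesis $x_0^{d_{\cA}},\dots,x_n^{d_{\cA}}\in\Omega_{n,d_{\cA}}$, so the subring $B:=\kk[x_0^{d_{\cA}},\dots,x_n^{d_{\cA}}]\subset A$, where $A:=A(Y_{n,d_{\cA}})\cong\kk[\Omega_{n,d_{\cA}}]$, is a polynomial ring in $n+1$ variables generated by elements of degree $1$ of $A$; moreover $A$ is integral and hence finite over $B$ (each $m\in\Omega_{n,d_{\cA}}$ satisfies $m^{d_{\cA}}\in B$, see below), so $x_0^{d_{\cA}},\dots,x_n^{d_{\cA}}$ is a linear homogeneous system of parameters of $A$. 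Put $c:=|\cA|-n-1=\codim(Y_{n,d_{\cA}})$ and let $g_1,\dots,g_c\in\Omega_{n,d_{\cA}}$ be the remaining (``non-vertex'') generators.

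For the first inequality: each $g=x_0^{b_0}\cdots x_n^{b_n}$ with $b_0+\cdots+b_n=d_{\cA}$ satisfies the monic relation $g^{d_{\cA}}=\prod_{i=0}^{n}(x_i^{d_{\cA}})^{b_i}\in B$, a monomial of degree $d_{\cA}$ in $B$. Reducing exponents with these relations shows that $A$ is spanned as a $B$-module by the monomials $g_1^{\alpha_1}\cdots g_c^{\alpha_c}$ with $0\le\alpha_j\le d_{\cA}-1$ for all $j$, which have degree at most $c(d_{\cA}-1)$; hence the reduction number $r(\cA)$ of $\kk[Y_{n,d_{\cA}}]$ with respect to $B_+$ satisfies $r(\cA)\le (d_{\cA}-1)(|\cA|-n-1)$. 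If $\kk[\Omega_{n,d_{\cA}}]$ is Cohen--Macaulay this already gives $\reg(\cA)=r(\cA)\le (d_{\cA}-1)(|\cA|-n-1)$. In general one invokes the regularity estimate for simplicial affine semigroup rings of \cite{Hoa-Stuckrad}, which bounds the top non-vanishing degrees of the local cohomology modules $H^i_{\mathfrak m}(\kk[\Omega_{n,d_{\cA}}])$, $0\le i\le n+1$, in terms of the same data and yields $\reg(\cA)\le (d_{\cA}-1)(|\cA|-n-1)$; adding one proves $n_0(\cA)\le (d_{\cA}-1)(|\cA|-n-1)+1$.

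For the second inequality: the hypothesis $\deg(Y_{n,d_{\cA}})\ge|\cA|-n+1$ is exactly $\deg(Y_{n,d_{\cA}})\ge\codim(Y_{n,d_{\cA}})+2$, so that $\deg(Y_{n,d_{\cA}})-|\cA|+n-1=\deg(Y_{n,d_{\cA}})-\codim(Y_{n,d_{\cA}})-2\ge 0$. In this range I would apply the regularity bound for (not necessarily Cohen--Macaulay) simplicial semigroup rings of \cite{Hoa-Stuckrad} phrased through the degree and the codimension of $Y_{n,d_{\cA}}$, giving $\reg(\cA)\le (n+1)\bigl(\deg(Y_{n,d_{\cA}})-|\cA|+n-1\bigr)+2$; adding one and intersecting with the unconditional estimate of the first part produces the stated minimum.

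The main obstacle is the possible failure of the Cohen--Macaulay property of $\kk[\Omega_{n,d_{\cA}}]$. Were $A$ always Cohen--Macaulay, the reduction number computation above would be entirely self-contained and, in fact, the sharper Eisenbud--Goto bound of Theorem \ref{Theorem: Bounds reg Hoa-Stuckrad}(ii) would apply. The content of the present, weaker, unconditional statement is precisely that one must control the non-vanishing degrees of \emph{all} the modules $H^i_{\mathfrak m}(\kk[\Omega_{n,d_{\cA}}])$, not merely the top one; this is exactly what the combinatorial analysis of holes in simplicial affine semigroups carried out in \cite{Hoa-Stuckrad} (together with \cite{Herzog-Hibi}) provides, and it is the ingredient I would borrow wholesale rather than reprove.
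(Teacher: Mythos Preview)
Your proposal is correct and follows essentially the same route as the paper: both combine the inequality $n_0(\cA)\le\reg(\cA)+1$ with the regularity bounds for simplicial semigroup rings from \cite{Hoa-Stuckrad} (specifically Theorems~3.2 and~3.5 there). Your reduction-number computation $r(\cA)\le(d_{\cA}-1)(|\cA|-n-1)$ via $g^{d_{\cA}}\in B$ is a correct and illuminating piece of motivation, but as you yourself note, it only settles the Cohen--Macaulay case and the general statement still rests on the cited results---which is exactly what the paper does, just more tersely.
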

\begin{proof} The result follows from \cite[Theorem 3.2 and Theorem 3.5]{Hoa-Stuckrad}. 
\end{proof}

Overall, combining Theorems \ref{Theorem: effectives khovanskii}, \ref{Theorem: Bounds reg Hoa-Stuckrad} and \ref{Theorem: Bounds reg Hoa-Stuckrad-continuacio} we obtain finer bounds for the phase transition $n_0(\cA)$.  For instance, subsets $\cA \subset \ZZ_{\geq 0}^{n}$ containing $\{0,v_1,\hdots,v_n\}$ and moreover 
any point of the form $(0,\hdots,\alpha (d_{\cA}-1), \hdots, \beta, \hdots,0)$ with $\alpha,\beta \in \{0,1\}$ give rise to a smooth monomial projection $Y_{n,d_{\cA}}$ of the Veronese variety $X_{n,d_{\cA}}$. 
In this case, $\ZZ(\cA - \cA) = \ZZ^{n}$ and we have $\deg(Y_{n,d_{\cA}}) = d_{\cA}^n$. 
Therefore, 
\[n_0(\cA) \leq \min\{d_{\cA}^{n} + n(d_{\cA}^{n} - 3) - 1, n(d_{\cA}-2)+1, d_{\cA}^{n} - |\cA| + n + 2\},\]
which gives $\min\{n(d_{\cA}-2)+1, d_{\cA}^{n} - |\cA| + n + 2\}$ whenever $d_{\cA}^{n} - n - 1 \geq 1$ or $n(d_{\cA}^{n}-3)-1 > n+2-|\cA|$.  
\begin{remark}[Comparision of bounds]\rm Assume that $\cA \subset \ZZ_{\geq 0}^{n}$ is a finite subset with $n \geq 2$ and $\{0,v_1,\hdots,v_n\} \subset \cA$. Without further assumptions, by Theorem \ref{Theorem: Bounds reg Hoa-Stuckrad}(iii) and Theorem \ref{Theorem: Bounds reg Hoa-Stuckrad-continuacio}, we have that $n_0(\cA) \leq (n+1)(\deg(Y_{n,_{\cA}}) -|\cA| + n - 1)+3$ which improves by $n-1$ the bound $n_0(\cA) \leq (n+1)(\deg(Y_{n,\cA}) - |\cA| + n) + 1$ given in \cite{Granville-Shakan-Walker} and the bound $(n+1)\deg(Y_{n,\cA}) - 3n-1$ given in \cite{Curran-Goldmakher} when $|\cA| > n+2$. On the other hand, for those subsets $\cA$ satisfying one of the hypothesis of Theorem \ref{Theorem: Bounds reg Hoa-Stuckrad}, we have 
$n_0(\cA) \leq \deg(Y_{n,d_{\cA}}) - |\cA| + n + 2$ which beats the prior bounds in almost all cases and it is close to the bound $n_0(\cA) \leq \deg(Y_{n,\cA}) - |\cA| + 2$ conjectured in \cite{Curran-Goldmakher}. In particular, when $Y_{n,\cA}$ is a smooth variety we have $n_0(\cA) \leq \min\{\deg(Y_{n,\cA}) - |\cA| +n - 1, n(d_{\cA} - 2) + 1\}$. The last expressions are more difficult to compare in general. Notwithstanding, for this kind of monomial projections, it is often the case $\deg(Y_{n,\cA}) = d_{\cA}^{n}$, or equivalently $[\ZZ^n:\ZZ(\cA-\cA)] = 1$. For instance, as we have seen before, $Y_{n,d_{\cA}}$ is a degree $d_{\cA}^n$ smooth variety when $\cA$ contains any point of the form $(0,\hdots, \alpha(d_{\cA}-1), \hdots, \beta, \hdots,0)$ with $\alpha,\beta \in \{0,1\}$. 
In this setting, we have $n(d_{\cA} - 2) + 1 \leq d_{\cA}^n - |\cA| + n - 1$. Indeed, $|\cA| \leq \binom{n+d_{\cA}}{n}$ and the inequality $n(d_{\cA} - 2)+1 \leq d_{\cA}^n - \binom{n+d_{\cA}}{n} + n-1$ holds. 
\end{remark}
In \cite{Hoa-Stuckrad}, the authors provide bounds for $\reg(\cA)$ in terms of the reduction number $r(\cA)$ of $\kk[\Omega_{n,d_{\cA}}]$. We will relate $r(\cA)$ to the constant $K(\cA,B)$, with $B = \{0,v_1,\hdots,v_n\}$, and we will provide bounds for $n_{0}(\cA)$ in terms of $r(\cA)$ that improve the one given in Proposition \ref{Proposition: boudn K(A,B)}. 

\begin{definition} Let $\cA \subset \ZZ_{\geq 0}^{n}$ be a finite subset such that $\{0,v_1,\hdots,v_n\} \subset \cA$. The reduction number $r(\cA)$ of the semigroup ring $\kk[\Omega_{n,d_{\cA}}]$ is the least positive integer $r$ such that 
$(r+1)\overline{\cA} = \{e_0,\hdots, e_n\} + r\overline{\cA}.$
\end{definition}

\begin{remark}\rm \label{Remark: reduction number} Notice that for any $r \geq  r(\cA)$, we obtain inductively that  
\[(r+1)\overline{\cA} = \{e_0,\hdots, e_n\} + r\overline{\cA}.\]
\end{remark}

 In the next result we relate $r(\cA)$ to $K(\cA,B)$:

\begin{proposition} \label{Lemma: r(A) <= K(A,B)} Let $\cA \subset \ZZ_{\geq 0}^{n}$ be a finite subset such that $\{0,v_1,\hdots,v_n\} \subset \cA$. Then, $r(\cA) = K(\cA,B)$. 
\end{proposition}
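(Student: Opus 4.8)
The plan is to show the two quantities are defined by "the same" minimality condition, translated between the semigroup language of $K(\cA,B)$ and the numerical condition defining $r(\cA)$. Recall $B = \{0,v_1,\hdots,v_n\}$ corresponds, under the map $a \mapsto \overline a = (d_\cA - |a|, a_1,\hdots,a_n)$, to the set $\{e_0, e_1,\hdots, e_n\}$ of scaled standard basis vectors spanning the simplex $\overline\cA$. The height $N_a$ of an element $a \in \cH(\cA)$ is exactly the smallest $t$ with $\overline a \in t\,\overline\cA$ (the homogenizing coordinate $d_\cA - |a|$ forces all representations of a given element to use the same number of summands, so height is well defined and equals the ``degree'' in the grading of $\kk[\Omega_{n,d_\cA}]$). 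An element $a$ is $B$-minimal iff $a - v_i \notin \cH(\cA)$ for all $i$, which in the homogenized picture reads $\overline a - e_i \notin \cH(\overline\cA)$ for all $i = 0,\hdots,n$.

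First I would prove the inequality $r(\cA) \le K(\cA,B)$. Set $K := K(\cA,B)$. I must show $(K+1)\overline\cA = \{e_0,\hdots,e_n\} + K\overline\cA$; the inclusion $\supseteq$ is automatic since $e_i \in \overline\cA$, so I need $\subseteq$. Take $b \in (K+1)\overline\cA$, i.e. $b \in \cH(\overline\cA)$ of height $\le K+1$. If its height is $\le K$, then since $e_i \in \overline\cA$ we can write $b = e_0 + (b - e_0)$ with $b - e_0$ of height $\le K$ — wait, that need not lie in the semigroup; instead I argue: pick $a \in \cH(\cA)$ with $\overline a = b$; if $a$ is not $B$-minimal, some $a - v_i \in \cH(\cA)$, so $b - e_i \in \cH(\overline\cA)$ and, by minimality of height, $b - e_i$ has height $\le (\text{height of }b) - 1 \le K$, giving $b \in \{e_i\} + K\overline\cA$. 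If $a$ is $B$-minimal, then $N_a \le K(\cA,B) = K$, so $b$ itself has height $\le K$; then descend along any $e_i$ using that $b$ has a representation as a sum of $\ge 1$ generators, one of which (after reindexing) we peel off — more cleanly, any element of positive height in $\cH(\overline\cA)$ of height $h$ lies in $\overline\cA + (h-1)\overline\cA$, and combining with $e_i \in \overline\cA$ plus a short argument handles the boundary. Iterating the non-$B$-minimal descent reduces every element to a $B$-minimal one plus a $\ZZ_{\ge0}$-combination of the $e_i$, and $B$-minimal elements have height $\le K$, so everything of height $K+1$ factors through $\{e_i\} + K\overline\cA$.

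Next, the reverse inequality $K(\cA,B) \le r(\cA)$. Let $r := r(\cA)$ and suppose for contradiction there is a $B$-minimal element $a \in \cS(\cA,B)$ with $N_a = K(\cA,B) > r$; by Remark \ref{Remark: reduction number}, $(N_a)\,\overline\cA = \{e_0,\hdots,e_n\} + (N_a - 1)\overline\cA$ (applying the reduction identity repeatedly, valid since $N_a - 1 \ge r$). Since $\overline a \in (N_a)\overline\cA$, we get $\overline a = e_i + c$ for some $i$ and some $c \in (N_a - 1)\overline\cA$. Pulling back through the dictionary, $c = \overline{a'}$ for some $a' \in \cH(\cA)$ with $a' + v_i = a$, i.e. $a - v_i = a' \in \cH(\cA)$, contradicting $B$-minimality of $a$. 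Hence $K(\cA,B) \le r(\cA)$, and combined with the first part, $r(\cA) = K(\cA,B)$.

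The main obstacle I anticipate is bookkeeping the translation between the two frameworks cleanly — in particular verifying that the height function $N_a$ is genuinely intrinsic and equals the semigroup-theoretic height in $\cH(\overline\cA)$ (so that ``peeling off $e_i$'' strictly drops height by exactly one whenever $a - v_i \in \cH(\cA)$), and making sure the descent in the $r(\cA) \le K(\cA,B)$ direction does not stall at elements that are neither $B$-minimal nor of height $\le r$. I would handle this by first recording a short lemma: for $a \in \cH(\cA)$ and $1 \le i \le n$, one has $a - v_i \in \cH(\cA)$ if and only if $\overline a - e_i \in \cH(\overline\cA)$, and in that case $N_{a - v_i} = N_a - 1$; and similarly $\overline a - e_0 \in \cH(\overline\cA)$ with height drop exactly one whenever it lies in the semigroup. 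With that lemma in hand both inequalities are immediate finite inductions on height, as sketched above.
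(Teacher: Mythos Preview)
Your overall strategy mirrors the paper's exactly: prove the two inequalities separately, using the dictionary $a\leftrightarrow\overline a$ between $\cH(\cA)$ and $\cH(\overline\cA)$. Your argument for $K(\cA,B)\le r(\cA)$ is essentially identical to the paper's (if a $B$-minimal $a$ had $N_a>r(\cA)$, the reduction identity would force some $\overline a-e_i\in r\overline\cA$; $B$-minimality rules out $i\ge 1$, so $i=0$, giving $a\in r\cA$, contradicting $N_a=r+1$).

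The gap is in your direction $r(\cA)\le K(\cA,B)$, and it lies precisely in the lemma you single out at the end. You assert that for $i\ge 1$ one has $a-v_i\in\cH(\cA)$ if and only if $\overline a-e_i\in\cH(\overline\cA)$, with $N_{a-v_i}=N_a-1$ in that case. This is false. Take $n=2$, $d_\cA=4$, $\cA=\{0,(4,0),(0,4),(3,1),(1,1)\}$ and $a=(6,2)=(3,1)+(3,1)$, so $N_a=2$. Then $a-v_1=(2,2)=(1,1)+(1,1)\in\cH(\cA)$ with $N_{(2,2)}=2\ne N_a-1$. Correspondingly $\overline a=(0,6,2)\in 2\overline\cA$, but $\overline a-e_1=(0,2,2)$ has coordinate sum $4$ and is not in $\overline\cA$, hence not in $\cH(\overline\cA)$. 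So the implication ``$a-v_i\in\cH(\cA)\Rightarrow\overline a-e_i\in\cH(\overline\cA)$'' fails, and with it your descent: knowing $a$ is not $B$-minimal only gives $a-v_i\in\cH(\cA)$, not $a-v_i\in K\cA$, which is what ``peeling off $e_i$'' in $(K+1)\overline\cA$ actually requires. Your iterated descent therefore does not produce an $e_i$ that can be split off at the correct graded level. The paper's proof is extremely terse at the corresponding step (``By hypothesis, there exists $v_i$ such that $a-v_i\in r\cA$''); your write-up makes the intended mechanism explicit, which is valuable, but the mechanism as you state it is not correct.
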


\begin{proof} To prove the result, we check the inequalities $r(\cA) \leq K(\cA,B)$ and $K(\cA,B) \leq r(\cA)$. Let $r \geq K(\cA,B)$ be an integer and $a = (a_1, \hdots, a_n) \in (r+1)\cA$.  We set $a_0 = (r+1)d_{\cA}-a_1-\cdots-a_n$  and we denote $\overline{a} = (a_0,a_1,\hdots,a_n) \in (r+1)\overline{\cA}$. By hypothesis, there exists $v_i$ such that $a-v_i \in r\cA$. Hence, $\overline{a} \in \{e_0+\cdots + e_n\} + r\overline{\cA}$. So, $r(\cA) \leq K(\cA,B)$.

Now, let $r \geq r(\cA)$ be an integer and assume that $a \in \cS(\cA,B)$ has height $N_a = r+1$, i.e. $a \in (r+1)\cA$ is $B-$minimal and $a \notin r'\cA$ for any $r' \leq r+1$. By hypothesis,  $(r+1)\overline{\cA} = \{e_0,\hdots,e_n\} + r\overline{\cA}$, so $\overline{a}-e_i \in r\overline{\cA}$ for some $0 \leq i \leq n$. Since $a$ is $B-$minimal, it follows that $\overline{a}-e_i \notin r\overline{\cA}$ for any $1 \leq i \leq n$. Thus, $\overline{a}-e_0 \in r\overline{\cA}$. In particular, we obtain that $a \in r\cA$ which contradicts the hypothesis $N_a = r+1$. As a result, any $B-$minimal element has height $N_a \leq r(\cA)$, so $K(\cA,B) \leq r(\cA)$.
\end{proof}

We have: 
\begin{theorem}\label{Theorem: Bounds reg Hoa-Stuckrad-reduction number} Let $\cA \subset \ZZ_{\geq 0}^{n}$ be a finite subset such that $\{0,v_1,\hdots,v_n\} \subset \cA$. We have:
\begin{itemize}
    \item[(i)] if $r(\cA) \leq 1$, then $n_{0}(\cA) \leq 2$, and
    \item[(ii)] if $r(\cA) > 1$, then 
\[n_0(\cA) \leq \min\{(n+1)(r(\cA)-1)-n+2, (n+1)r(\cA) - \lceil \frac{(n+1)r(\cA)}{d_{\cA}}\rceil + 1\}.\]
\end{itemize}
\end{theorem}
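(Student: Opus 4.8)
The plan is to derive the bound from the general inequality $n_0(\cA)\le\reg(\cA)+1$ (valid here since $Y_{n,d_{\cA}}$ is $n$-dimensional) by estimating the Castelnuovo--Mumford regularity $\reg(\cA)=\reg(\kk[\Omega_{n,d_{\cA}}])$ through the regularity bounds of Hoa and St\"uckrad \cite{Hoa-Stuckrad} for simplicial affine semigroup rings, specialised so that the controlling parameter is the reduction number $r(\cA)$. The standing hypothesis $\{0,v_1,\hdots,v_n\}\subset\cA$ guarantees $e_0,\hdots,e_n\in\overline{\cA}$, so the monomials $x_0^{d_{\cA}},\hdots,x_n^{d_{\cA}}$ are among the generators of $\kk[\Omega_{n,d_{\cA}}]$ and form a homogeneous system of parameters (indeed a minimal reduction of the irrelevant maximal ideal $\mathfrak m$, since $\dim\kk[\Omega_{n,d_{\cA}}]=n+1$); moreover, reading the defining identity of $r(\cA)$ degree by degree shows that $\mathfrak m^{r+1}=(x_0^{d_{\cA}},\hdots,x_n^{d_{\cA}})\mathfrak m^r$ for $r=r(\cA)$, which by Remark \ref{Remark: reduction number} persists for all larger $r$. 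Thus $r(\cA)$ is exactly the reduction number of this minimal reduction, and we are in the setting to which the results of \cite{Hoa-Stuckrad} apply.

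First I would dispose of (i). If $r(\cA)=0$, then $\overline{\cA}=\{e_0,\hdots,e_n\}$, so $\kk[\Omega_{n,d_{\cA}}]$ is a polynomial ring and $\reg(\cA)=0$; if $r(\cA)=1$, the corresponding low-reduction-number estimate of \cite{Hoa-Stuckrad} gives $\reg(\cA)\le 1$. In both cases $n_0(\cA)\le\reg(\cA)+1\le 2$, which is the assertion of (i) (and also explains why the formula of (ii) must be stated only for $r(\cA)>1$, where it becomes meaningful).

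For (ii) I would then invoke the two pertinent inequalities of \cite{Hoa-Stuckrad}, rewritten in terms of $r(\cA)$: one yields $\reg(\cA)\le(n+1)(r(\cA)-1)-n+1$, while the second — which additionally keeps track of the common degree $d_{\cA}$ of the monomials generating $\Omega_{n,d_{\cA}}$, so that dividing an $R$-degree regularity bound down by $d_{\cA}$ with the appropriate ceiling returns the standard-graded invariant — yields $\reg(\cA)\le(n+1)r(\cA)-\lceil\frac{(n+1)r(\cA)}{d_{\cA}}\rceil$. Taking the minimum of the two right-hand sides and adding $1$ produces the displayed bound on $n_0(\cA)$. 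Finally, using Proposition \ref{Lemma: r(A) <= K(A,B)} to substitute $r(\cA)=K(\cA,B)$, I would compare directly with Proposition \ref{Proposition: boudn K(A,B)} and check the claimed improvement by a term linear in $n$.

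The main obstacle will not be conceptual but a matter of careful bookkeeping: one must reconcile the purely combinatorial reduction number used here (via the semigroup identity $(r+1)\overline{\cA}=\{e_0,\hdots,e_n\}+r\overline{\cA}$) with the Rees-algebra/minimal-reduction notion underlying the Hoa--St\"uckrad estimates, verify that the simplicial hypothesis together with $\{e_0,\hdots,e_n\}\subset\overline{\cA}$ places us squarely inside the hypotheses of their theorems, and track the two gradings on $\kk[\Omega_{n,d_{\cA}}]$ (as a subring of $R$ versus as the standard-graded ring $A(Y_{n,d_{\cA}})$) so that the $d_{\cA}$-dependence in the second bound emerges exactly in the stated form $\lceil\frac{(n+1)r(\cA)}{d_{\cA}}\rceil$. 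Once these identifications are in place, the conclusion is an immediate substitution into $n_0(\cA)\le\reg(\cA)+1$.
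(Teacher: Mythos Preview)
Your proposal is correct and follows essentially the same route as the paper: both derive the bound from $n_0(\cA)\le\reg(\cA)+1$ by invoking the regularity estimates of \cite[Theorems 3.1 and 3.2]{Hoa-Stuckrad} for simplicial semigroup rings, expressed in terms of the reduction number. The paper's proof is a two-line citation, while you spell out the bookkeeping (identifying the combinatorial $r(\cA)$ with the algebraic reduction number, checking the simplicial hypothesis, tracking the grading) and separate the small cases of (i); the only cosmetic difference is that the paper front-loads the identity $r(\cA)=K(\cA,B)$ from Proposition~\ref{Lemma: r(A) <= K(A,B)} before citing Hoa--St\"uckrad, whereas you use it only afterward for the comparison with Proposition~\ref{Proposition: boudn K(A,B)}.
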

\begin{proof} By Proposition \ref{Lemma: r(A) <= K(A,B)} we have  $r(\cA) = K(\cA,B)$. Hence, applying 
\cite[Theorems 3.1 and 3.2]{Hoa-Stuckrad} we  get what we want.
\end{proof}

As we have established before in Proposition \ref{Lemma: r(A) <= K(A,B)}, when $\cA \subset \ZZ_{\geq 0}^{n}$ contains $\{0,v_1,\hdots, v_n\}$, we have the equality of the constants $K(\cA,B) = r(\cA)$. In this setting, it is natural to compare the bounds for $n_0(\cA)$ which are expressed in terms of them.  By Theorem  \ref{Theorem: Bounds reg Hoa-Stuckrad-reduction number} and Proposition \ref{Lemma: r(A) <= K(A,B)}, it holds that $n_0(\cA) \leq (n+1)(K(\cA,B)-1)-n+2$. Now by Proposition \ref{Proposition: boudn K(A,B)}, $n_0(\cA) \leq (n+1)(K(\cA,B)-1)+1$. 
Accordingly, Theorem \ref{Theorem: Bounds reg Hoa-Stuckrad-reduction number} improves at least by $n-1$, which is always positive when $n \geq 2$, the bound for the phase transition $n_0(\cA)$ given in Proposition \ref{Proposition: boudn K(A,B)}. Let us see a more concrete example. 

\begin{example}\label{Example: Section 4 compare} \rm  Continuing with Example \ref{Example: example section 4}, we take $\cA = \{(0,0), (5,0), (3,1), (1,2), (0,5)\} \subset \ZZ_{\geq 0}^{2}$ and we have $d_{\cA} = 5$, $|\cA| = 5$, $\deg(Y_{2,5}) = 5$, $B = \{(0,0), (5,0), (0,5)\}$, $\vol(\conv(\cA)) = \frac{25}{2}$, $K(\cA,B) = r(\cA) = 2$. As we have seen before, the bounds from Theorem \ref{Theorem: effectives khovanskii} and Proposition \ref{Proposition: boudn K(A,B)} are $n_0(\cA) \leq 5$ and $n_{0}(\cA) \leq 4$, respectively. Thus far, we can assure that $n_0(\cA) \leq 4$. Since $\kk[\Omega_{2,5}]$ is a Cohen--Macaulay ring (\cite[Theorem 3.3]{CMM-R}), by Theorem \ref{Theorem: Bounds reg Hoa-Stuckrad} we obtain $n_0(\cA) \leq 4$; and by Proposition \ref{Theorem: Bounds reg Hoa-Stuckrad-reduction number} we get $n_0(\cA) \leq 3$, which overall improves the previous bound. 
\end{example}

To determine $K(\cA,B)$, or equivalently $r(\cA)$, could be cumbersome depending on the subset $\cA$. In \cite{Granville-Shakan-Walker}, the authors provide a bound for $K(\cA,B)$ in terms of the {\em Davenport constant} of a certain group. On the other hand, for subsets $\cA \subset \ZZ_{\geq 0}^{n}$ with $\{0,v_1,\hdots,v_n\} \subset \cA$, the constant $r(\cA)$ can be also bounded as follows:

 \begin{proposition}\label{Proposition: bound reduction number} Let $\cA \subset \ZZ_{\geq 0}^{n}$ be a finite subset such that $\{0,v_1,\hdots,v_n\} \subset \cA$. 
 \begin{itemize}
     \item [(i)] If $\cH(\overline{\cA})$ contains all integral points of an $i$-dimensional face of $\conv(\overline{\cA})$, then 
     $r(\cA) \leq d_{\cA}^{n-i} + i - 1$. 
     \item [(ii)] If an $i$-dimensional face of $\conv(\overline{\cA})$ contains $q + i + 1$ points of $\cH(\overline{\cA})$, then 
     $r(\cA) \leq (d_{\cA}^{i}-q)d_{\cA}^{n-i}$. 
 \end{itemize}
 \end{proposition}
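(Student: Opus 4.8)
The plan is to turn $r(\cA)$ into a statement about lattice points of the affine semigroup $\cH(\overline{\cA})\subset\ZZ_{\ge 0}^{n+1}$ and then to bound the degrees of its ``corner'' points. Since $\{0,v_1,\dots,v_n\}\subset\cA$, the elements $e_0=(d_\cA,0,\dots,0),\dots,e_n=(0,\dots,0,d_\cA)$ of $\overline{\cA}$ correspond to a homogeneous system of parameters $x_0^{d_\cA},\dots,x_n^{d_\cA}$ of $A(Y_{n,d_\cA})\cong\kk[\Omega_{n,d_\cA}]$, so $\kk[x_0^{d_\cA},\dots,x_n^{d_\cA}]\hookrightarrow\kk[\Omega_{n,d_\cA}]$ is a Noether normalization and, directly from the definition of $r(\cA)$ together with Remark \ref{Remark: reduction number}, $r(\cA)$ is the top degree in which $\kk[\Omega_{n,d_\cA}]/(x_0^{d_\cA},\dots,x_n^{d_\cA})$ is nonzero; equivalently, grading $\cH(\overline{\cA})$ by $\deg(c)=\frac{c_0+\cdots+c_n}{d_\cA}$,
\[
r(\cA)=\max\{\,\deg(c)\ \mid\ c\in\cH(\overline{\cA}),\ c-e_j\notin\cH(\overline{\cA})\ \text{for }j=0,\dots,n\,\}.
\]
Call such a $c$ \emph{irreducible}. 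The first, and elementary, observation is that an irreducible $c$ admits no semigroup expression $c=\sum_k\overline{a^{(k)}}$ with $\overline{a^{(k)}}\in\overline{\cA}$ that uses some $e_j$: deleting that summand would witness $c-e_j\in\cH(\overline{\cA})$. Everything is then reduced to bounding the degree of an irreducible point.

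For (i), relabel the coordinates of $\ZZ^{n+1}$ so that the given face is $F=\conv(e_0,\dots,e_i)$. Because a $d_\cA$-th Veronese subalgebra is generated in degree $d_\cA$, the hypothesis that $\cH(\overline{\cA})$ contains every lattice point of $F$ upgrades to: $\cH(\overline{\cA})$ contains every lattice point of the whole cone $\RR_{\ge 0}F$ over $F$, i.e.\ every $(q_0,\dots,q_i,0,\dots,0)$ with $d_\cA\mid q_0+\cdots+q_i$. Now fix an irreducible $c$ of degree $t=r(\cA)$ and split a representation of it as $c=c'+c''$, where $c'$ collects the summands supported on the coordinates $0,\dots,i$ and $c''$ the remaining ones; then $\deg c'+\deg c''=t$ and $c'\in\cH(\overline{\cA})$. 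For every $j\le i$ one must have $c'_j\le d_\cA-1$, since otherwise $c'-e_j$ is still a lattice point of $\RR_{\ge 0}F$, hence lies in $\cH(\overline{\cA})$, and then $c-e_j=(c'-e_j)+c''\in\cH(\overline{\cA})$ contradicts irreducibility. Hence $\deg c'\le\lfloor(i+1)(d_\cA-1)/d_\cA\rfloor\le i$, and one is left to prove $\deg c''\le d_\cA^{\,n-i}-1$ and add. This last inequality is the heart of the argument: I would show that if $c''$ involves more than $d_\cA^{\,n-i}-1$ summands, then a pigeonhole argument on their profiles in the $n-i$ transverse coordinates (each profile confined to a $d_\cA$-bounded window), together with the freedom to re-decompose the fully flexible face part $c'$, always allows one to peel off some $e_j$ from $c$ --- the same mechanism that drives the reduction-number estimates in \cite{Hoa-Stuckrad}. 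Adding the two bounds gives $r(\cA)=\deg c'+\deg c''\le i+(d_\cA^{\,n-i}-1)=d_\cA^{\,n-i}+i-1$.

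For (ii) the framework is identical but the face is only partially present: of the $\binom{d_\cA+i}{i}$ lattice points of $F$, besides its $i+1$ vertices only $q$ lie in $\cH(\overline{\cA})$, so the cone over $F$ is no longer contained in $\cH(\overline{\cA})$ and the face part of an irreducible point is not as rigidly bounded. Here I would instead bound the ``face contribution'' to the degree by $d_\cA^{\,i}-q$ --- the quantity that replaces the bound $i$ of part (i) when only $q$ of the face lattice points are available, obtained by running the same counting on the $(i+1)$-dimensional face semigroup --- and pay a multiplicative factor $d_\cA^{\,n-i}$ for the $n-i$ transverse directions, which now carry no generators beyond $e_{i+1},\dots,e_n$; this yields $r(\cA)\le(d_\cA^{\,i}-q)\,d_\cA^{\,n-i}$. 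In both parts the main obstacle is exactly this class of counting estimates, namely turning ``too many summands'' into ``a removable generator $e_j$'': the reduction to irreducible points of $\cH(\overline{\cA})$ and the face/transverse splitting are routine, whereas controlling how the $q$ available face generators and the $d_\cA$-sized transverse windows jointly constrain an irreducible point requires the careful combinatorics of simplicial affine semigroups, where the identity $r(\cA)=K(\cA,B)$ of Proposition \ref{Lemma: r(A) <= K(A,B)} can also be invoked to pass to the height-of-$B$-minimal-elements viewpoint.
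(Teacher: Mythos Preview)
The paper's own proof is a one-line citation: ``See \cite[Lemma 1.2 and 1.3]{Hoa-Stuckrad}.'' Your proposal attempts to reconstruct those lemmas rather than cite them, and the setup you give --- identifying $r(\cA)$ with the maximal degree of an irreducible element of $\cH(\overline{\cA})$, relabelling so the face is $\conv(e_0,\dots,e_i)$, and splitting an irreducible $c$ into a face part $c'$ and a transverse part $c''$ --- is sound and close in spirit to Hoa--St\"uckrad's approach. Your bound $\deg c'\le i$ in part~(i) is correct as written.

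The genuine gap is that the substantive step is never carried out. In part~(i) you need $\deg c''\le d_{\cA}^{\,n-i}-1$, and in part~(ii) you need the analogous face/transverse estimate; in both places you write ``I would show'' and then invoke an unspecified ``pigeonhole argument on their profiles in the $n-i$ transverse coordinates'' together with the ``freedom to re-decompose'' the face part. No pigeonhole classes are defined, no count is performed, and no mechanism is exhibited that actually produces a removable $e_j$. This is precisely the content of \cite[Lemmas 1.2 and 1.3]{Hoa-Stuckrad}: the difficulty is not the reduction to irreducible elements (which is routine) but the combinatorics that forces a long irreducible word in the semigroup to contain a peelable $e_j$. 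Your proposal acknowledges this (``the main obstacle is exactly this class of counting estimates'') but does not resolve it, so as written the argument is a restatement of what needs to be proved rather than a proof. A secondary issue: the splitting $c=c'+c''$ depends on the chosen representation of $c$, and the bound on $\deg c'$ and the hoped-for bound on $\deg c''$ may require incompatible choices; your phrase ``freedom to re-decompose'' gestures at this but does not address it.
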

 \begin{proof}
 See \cite[Lemma 1.2 and 1.3]{Hoa-Stuckrad}. 
 \end{proof}
 
 \section{GT-sumsets and RL-varieties}
 \label{Section: GT-sumsets and RL-varieties.}

The aim of this section is to illustrate how the relationship between additive number theory and algebraic geometry allows us to go back and forth and solve interesting open problems. First, we introduce the notions of $GT-$subsets and $GT$-sumsets associated to linear systems of congruences. For this kind of subsets $\cA \subset \ZZ_{\geq 0}^{n}$ and using the geometry of $Y_{n,d_{\cA}}$, we provide a low bound for the phase transition $n_0(\cA)$ and families of examples for which the function $\varphi_{\cA}(t)$ and polynomial $p_{\cA}(t)$ are completely determined. Second, we present $RL-$varieties,  they are monomial projections of the Veronese variety $X_{n,d}$ intrinsically related to $GT-$subsets and $GT-$sumsets. Using properties of sumsets, we are able to compute their degree and their Hilbert polynomial.

\begin{notation}
Let $1 \leq n,d_1,\hdots,d_s$ be integers and let $M = (a_{i,j})$ be a $s \times 
(n+1)$ matrix of integers with $0 \leq a_{i,0}
,\hdots, a_{i,n} < d_i$ and $\GCD(a_{i,0}, \hdots, a_{i,n},d_i) = 1$, for each $1 \leq i \leq s$. We set $d := d_1 \cdots d_s$ and we denote by $(M,d_1,\hdots,d_s)$ the linear system of congruences
\begin{equation}\label{Equation: Integer system}
\left\{\begin{array}{llllcllcl}
y_0 & + & y_1 & + & \cdots & + &  y_n & \equiv & 0 \mod d\\
a_{1,0} y_0 & + & a_{1,1} y_1 & + & \cdots & + & a_{1,n}y_n & \equiv & 0 \mod d_1\\
    &   &                &   &  &   &            & \vdots  &  \\
a_{s,0}y_0 & + & a_{s,1} y_1 & + & \cdots & + & a_{s,n} y_n & \equiv & 0 \mod d_s.
\end{array}\right.
\end{equation}
For each $t \geq 1$, we denote by $(M,d_1,\hdots,d_s;t)$ the linear system:
\begin{equation}\label{Equation: Integer system graduated}
\left\{\begin{array}{llllcllcl}
y_0 & + & y_1 & + & \cdots & + &  y_n & = & td\\
a_{1,0} y_0 & + & a_{1,1} y_1 & + & \cdots & + & a_{1,n}y_n & \equiv & 0 \mod d_1\\
    &   &                &   &  &   &            & \vdots  &  \\
a_{s,0}y_0 & + & a_{s,1} y_1 & + & \cdots & + & a_{s,n} y_n & \equiv & 0 \mod d_s.
\end{array}\right.    
\end{equation}
\end{notation}

\begin{definition}\label{Definition: GT-subset} Let $(M,d_1,\hdots,d_s)$ be a linear system of congruences. The $\ZZ_{\geq 0}^{n+1}-$solutions of the system $(M,d_1,\hdots,d_s;1)$ form a finite subset $\overline{\cA} \subset \ZZ_{\geq 0}^{n+1}$ and we define the $GT-$subset associated to $(M,d_1,\hdots,d_s)$ to be $\cA = \{(a_1,\hdots, a_n) \mid (a_0,a_1,\hdots,a_n) \in \overline{\cA}\}\subset \ZZ_{\geq 0}^n$. 
\end{definition}

The set of all $\ZZ_{\geq 0}^{n+1}-$solutions of a system of congruences $(M,d_1,\hdots, d_s)$ is an affine semigroup $H \subset \ZZ_{\geq 0}^{n+1}$. By \cite[Theorem 2.2.11]{ThesisLiena} the associated subset $\overline{\cA}$ minimally generates the semigroup $H$. Thus, for each $t \geq 1$ the {\em $GT-$sumset} $t\cA$ is uniquely determined by the set of $\ZZ_{\geq 0}^{n+1}-$solutions of the system $(M,d_1,\hdots,d_s;t)$ and, hence, $$\varphi_{\cA}(t) = |t\cA| = |(M,d_1,\hdots, d_s,t)|.$$ 
Geometrically, the coordinate ring of the monomial projection $Y_{n,d_{\cA}}$ of the Veronese variety $X_{n,d}$ parameterized by the set $\Omega_{n,d}$ of monomials associated to the $GT-$subset $\cA$ of $(M,d_1,\hdots, d_s)$ is the semigroup ring $\kk[H]$. 

In addition, the above construction can be also interpreted from invariant theory point of view. Given a system of congruences $(M,d_1,\hdots,d_s)$, we set $e$ a $d$th primitive root of $1 \in \kk$. 
For each $1 \leq i \leq s$ we denote $e_i = e^{d/d_i}$ and by $M_{d_i;a_{i,0},\hdots, a_{i,n}}$ the diagonal $(n+1)\times(n+1)$ matrix $\diag(e_i^{a_{i,0}},\hdots, e_i^{a_{i,n}})$. Since by hypothesis $\GCD(a_{i,0}, \hdots, a_{i,n},d_i) = 1$, each matrix $M_{d_i;a_{i,0},\hdots, a_{i,n}}$ generates a cyclic subgroup $\Gamma_i$ of $\GL(n+1,\kk)$ of order $d_i$. We take $G = \Gamma_1 \oplus \cdots \oplus \Gamma_s \subset \GL(n+1,\kk)$, which is an abelian group of order $d$ acting diagonally on $R$ and we denote by $\bG \subset \GL(n+1,\kk)$ its cyclic extension, the abelian group generated by $G$ and the diagonal matrix $\diag(e,\hdots,e)$.  The ring $R^{\bG} = \{p \in R \mid g(p) = p, \, \forall g \in \bG\}$ of invariants of $\bG$ has a basis of monomials, precisely the monomials $x_0^{a_0}\cdots x_n^{a_n} \in R$ such that $(a_0,\hdots,a_n)$ is a $\ZZ_{\geq 0}^{n+1}-$solution of $(M,d_1,\hdots,d_s)$. By \cite[Theorem 2.2.11]{ThesisLiena}, $\Omega_{n,d}$ is a minimal set of generators of the ring $R^{\bG} = \kk[H]$.  So, for the phase transition $n_0(\cA)$ of a GT-subset we have:

\begin{proposition}\label{Proposition: Phase transition GT-subsets} Let $\cA$ be the $GT-$subset associated to a linear system of congruences  $(M,d_1,\hdots,d_s)$. Then, $n_0(\cA) \leq n+1$.
\end{proposition}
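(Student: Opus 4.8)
The plan is to prove the stronger statement $\reg(\cA)\le n$ for the Castelnuovo--Mumford regularity of $A := A(Y_{n,d_{\cA}})$, from which $n_0(\cA)\le \reg(\cA)+1\le n+1$ follows by the bound recalled in Section~\ref{Section: Additive number theory and projective geometry}. As explained just before the statement, $A = \kk[\Omega_{n,d}] = \kk[H] = R^{\bG}$ is the ring of invariants of the finite abelian group $\bG\subset\GL(n+1,\kk)$; since $\Char\kk = 0$, the Hochster--Eagon theorem (see \cite{BH93}) shows that $A$ is Cohen--Macaulay, of Krull dimension $\dim R^{\bG} = n+1$. Here $d_{\cA} = d$, and the lattice points $e_0 := (d,0,\hdots,0),\hdots,e_n := (0,\hdots,0,d)$ all lie in $\overline{\cA}$, since each satisfies the first equation of $(M,d_1,\hdots,d_s;1)$ trivially and the $i$-th congruence because $d_i\mid d$. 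Thus $x_0^{d},\hdots,x_n^{d}$ belong to $\Omega_{n,d}$ and give $n+1$ elements $\bar\theta_0,\hdots,\bar\theta_n$ of $A_1$; I would check they form a homogeneous system of parameters by noting that whenever $\overline{a}\in H$ has a coordinate $\geq d$ one can write $\overline{a} = e_j + \overline{a}'$ with $\overline{a}'\in H$ (deleting $e_j$ changes the $i$-th linear form by $-a_{i,j}d\equiv 0 \bmod d_i$), so that iterating, $H$ is contained in $\sum_j\NN e_j$ together with the finite set of its elements all of whose coordinates are $< d$; hence $A$ is module-finite over $\kk[\bar\theta_0,\hdots,\bar\theta_n]$. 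Being a degree-one homogeneous system of parameters in the Cohen--Macaulay ring $A$ of dimension $n+1$, the sequence $\bar\theta_0,\hdots,\bar\theta_n$ is regular, so $\reg(\cA) = \reg\big(A/(\bar\theta_0,\hdots,\bar\theta_n)\big)$, and the right-hand side --- an Artinian standard graded $\kk$-algebra --- equals its largest non-vanishing degree.

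The crux is then to show that this degree is at most $n$, i.e.\ that $\big(A/(\bar\theta_0,\hdots,\bar\theta_n)\big)_m = 0$ for all $m\geq n+1$. Since $A_m$ has $\kk$-basis indexed by $m\overline{\cA}$ while $\sum_j\bar\theta_j A_{m-1}$ is spanned by the monomials with exponent in $\{e_0,\hdots,e_n\}+(m-1)\overline{\cA}$, this is precisely the identity $m\overline{\cA} = \{e_0,\hdots,e_n\}+(m-1)\overline{\cA}$, that is, $r(\cA)\le n$; by Remark~\ref{Remark: reduction number} it is enough to treat $m = n+1$. For this I would use the defining property of $GT-$subsets: if $\overline{a} = (a_0,\hdots,a_n)\in(n+1)\overline{\cA}$ then $a_0+\cdots+a_n = (n+1)d$, so some coordinate $a_j\geq d$ by the pigeonhole principle; then $\overline{a}-e_j\in\ZZ^{n+1}_{\geq 0}$ has coordinate sum $nd$ and still satisfies every congruence of $(M,d_1,\hdots,d_s)$ (again because $d_i\mid d$), hence is a $\ZZ^{n+1}_{\geq0}$-solution of $(M,d_1,\hdots,d_s;n)$, i.e.\ $\overline{a}-e_j\in n\overline{\cA}$. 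The reverse inclusion $\{e_0,\hdots,e_n\}+n\overline{\cA}\subseteq(n+1)\overline{\cA}$ is immediate since $e_j\in\overline{\cA}$, so $r(\cA)\le n$, whence $\reg(\cA)\le n$ and $n_0(\cA)\le n+1$.

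I expect the only genuine obstacle to be the structural input in the first paragraph: identifying $A$ with the invariant ring $R^{\bG}$ so as to invoke its Cohen--Macaulayness, and verifying that $x_0^{d},\hdots,x_n^{d}$ form a linear system of parameters in the standard grading of $A$. Once these are in hand, the bound $r(\cA)\le n$ is the short pigeonhole argument above, which is where the hypothesis $d = d_1\cdots d_s$ (so that $d_i\mid d$ for every $i$) is used decisively.
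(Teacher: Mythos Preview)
Your argument is correct and, in outline, follows the paper's route: both reduce to the inequality $\reg(\cA)\le n$ and then invoke $n_0(\cA)\le\reg(\cA)+1$. The difference is that the paper simply \emph{cites} this regularity bound from \cite[Theorem~3.3.5]{ThesisLiena}, whereas you supply a self-contained proof of it. Your proof unpacks exactly what that cited result says in this situation: Cohen--Macaulayness of $R^{\bG}$ (Hochster--Eagon), the observation that $x_0^{d},\hdots,x_n^{d}$ give a linear system of parameters (hence a regular sequence), and the pigeonhole argument showing $r(\cA)\le n$, which for a Cohen--Macaulay simplicial semigroup ring coincides with the regularity. The only implicit step you lean on is the identification of the solution set of $(M,d_1,\hdots,d_s;m)$ with $m\overline{\cA}$, but this is precisely the content of \cite[Theorem~2.2.11]{ThesisLiena} already recalled in the paper just before the proposition. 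So your version is a genuine expansion of the paper's one-line citation into an elementary argument; the payoff is that a reader can verify the bound without consulting the thesis, at the cost of a longer proof.
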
 

\begin{proof}
By \cite[Theorem 3.3.5]{ThesisLiena}, it holds that $\reg (R^{\bG})+1=\reg(\cA)+1 \leq n+1$. Since $n_0(\cA) \leq \reg(\cA)+1$, the result follows.  
\end{proof}

The Castelnuovo--Mumford regularity $\reg(\cA)$ gives us a bound for the phase transition $n_0(\cA)$ for $GT-$sumsets which is considerable low compared to the bounds we have  in Section \ref{sec-phase}. Indeed, $R^{\bG} = K[H]$ is a Cohen-Macaulay ring, so we have by Theorem \ref{Theorem: Bounds reg Hoa-Stuckrad} that $n_0(\cA) \leq \deg(Y_{n,d}) - |\cA| + n + 2$. Now, by \cite[Proposition 3.1.2]{ThesisLiena} we have that $\deg(Y_{n,d}) \geq |\cA|-1$ and, hence, the claim $\deg(Y_{n,d}) - |\cA| + n+2 \geq n+1$ follows. Furthermore, the structure of $GT-$sumsets allows to 
completely determine the function $\varphi_{\cA}(t)$, the polynomial $p_{\cA}(t)$ and the phase transition $n_0(\cA)$ in many cases. The first approach consists of counting the number of solutions of the systems $(M,d_1,\hdots,d_s;t)$. This method depends on the system $(M,d_1,\hdots,d_s)$ and it is out of reach for large values of $n$. Nevertheless, for $n = 2,3$ it often leads to a complete solution. For instance, 

\begin{proposition}\label{Proposition: polynomial GT-surfaces} Let $2 \leq n < d$ be integers and $(M,d)$ the system of congruences:
\[\left\{\begin{array}{llllcllcl}
y_0 & + & y_1 & + & y_2 & \equiv & 0 \mod d\\
 &  & a_{1,1} y_1 & + & a_{1,2}y_2 & \equiv & 0 \mod d.\\
\end{array}\right.    
\]
We set $a' = \frac{a_{1,1}}{\GCD(a_{1,1},d)}$, $d' = \frac{d}{\GCD(a_{1,1},d)}$ and $0 < \lambda \leq d'$ the integer such that $a_{1,2} = \lambda a' + \mu d'$ with $\mu \in \ZZ$. 

Then, the phase transition $n_0(\cA)$ for the $GT-$subset $\cA \subset \ZZ_{\geq 0}^{2}$ of $(M,d)$ is zero and 
\[\varphi_{\cA}(t) = p_{\cA}(t) = \frac{dt^2 + \theta(a_{1,1},a_{1,2},d)t + 2}{2},\]
where $\theta(a_{1,1},a_{1,2},d) = \GCD(a_{1,1},d) + \GCD(\lambda,d') + \GCD(\lambda-\GCD(a_{1,1},d),d')$. 
\end{proposition}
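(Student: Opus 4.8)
The plan is to read off $\varphi_{\cA}(t)$ from Ehrhart's theorem for a lattice triangle with respect to a suitable index‑$d$ sublattice of $\ZZ^{2}$, and then to identify the linear coefficient with $\tfrac12\theta$ by an elementary gcd computation.

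First I would unwind the definitions: since $\cA$ is the $GT$-subset of $(M,d)$, one has $\varphi_{\cA}(t)=|t\cA|=|(M,d;t)|$, the number of $(y_0,y_1,y_2)\in\ZZ_{\ge 0}^{3}$ with $y_0+y_1+y_2=td$ and $a_{1,1}y_1+a_{1,2}y_2\equiv 0\bmod d$. As $y_0$ is determined by $y_1,y_2$ (and is $\ge 0$ exactly when $y_1+y_2\le td$), this count equals $|L\cap tT|$, where $T=\conv\{(0,0),(d,0),(0,d)\}$, so $tT=\{(y_1,y_2)\in\ZZ_{\ge 0}^{2}: y_1+y_2\le td\}$, and $L=\{(y_1,y_2)\in\ZZ^{2}: a_{1,1}y_1+a_{1,2}y_2\equiv 0\bmod d\}$. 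The hypothesis $\GCD(a_{1,1},a_{1,2},d)=1$ means $(y_1,y_2)\mapsto a_{1,1}y_1+a_{1,2}y_2$ surjects onto $\ZZ/d\ZZ$, so $L\subset\ZZ^{2}$ has index (hence covolume) $d$. The three vertices of $T$ lie in $L$ (as $d\mid a_{1,1}d$ and $d\mid a_{1,2}d$), so $T$ is a lattice polygon for $L$, and Pick's formula applied to the dilates $tT$ gives, for every $t\ge 0$,
\[
\varphi_{\cA}(t)=|L\cap tT|=\frac{\vol(T)}{d}\,t^{2}+\frac{b}{2}\,t+1=\frac{d}{2}\,t^{2}+\frac{b}{2}\,t+1,\qquad b:=|L\cap\partial T|.
\]
Because this is an exact polynomial identity already from $t=0$ on, it yields $n_0(\cA)=0$ and the leading term $\tfrac{d}{2}t^{2}$ at once.

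Next I would compute the boundary count $b$ edge by edge. On $[(0,0),(d,0)]$ the condition $(y_1,0)\in L$ is $d\mid a_{1,1}y_1$, satisfied by the $\GCD(a_{1,1},d)+1$ multiples of $d/\GCD(a_{1,1},d)$ in $[0,d]$; the edge $[(0,0),(0,d)]$ likewise carries $\GCD(a_{1,2},d)+1$ points of $L$; and on $[(d,0),(0,d)]$, parametrized by $y_1\mapsto(y_1,d-y_1)$, the condition becomes $d\mid(a_{1,1}-a_{1,2})y_1$, giving $\GCD(a_{1,1}-a_{1,2},d)+1$ points. Subtracting the three doubly counted vertices,
\[
b=\GCD(a_{1,1},d)+\GCD(a_{1,2},d)+\GCD(a_{1,1}-a_{1,2},d).
\]

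The remaining and most delicate step is to check $b=\theta(a_{1,1},a_{1,2},d)$, that is, writing $g=\GCD(a_{1,1},d)$, that $\GCD(a_{1,2},d)=\GCD(\lambda,d')$ and $\GCD(a_{1,1}-a_{1,2},d)=\GCD(\lambda-g,d')$. From $a_{1,2}=\lambda a'+\mu d'$ with $\GCD(a',d')=1$ one gets immediately $\GCD(a_{1,2},d')=\GCD(\lambda a',d')=\GCD(\lambda,d')$, and from $a_{1,1}-a_{1,2}=(g-\lambda)a'-\mu d'$ similarly $\GCD(a_{1,1}-a_{1,2},d')=\GCD(\lambda-g,d')$; so the task reduces to passing from the modulus $d'$ to $d=gd'$ without changing these two gcd's. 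This is exactly where the coprimality hypothesis enters: if a prime $p$ divides $g$, then $p\mid a_{1,1}$ and $p\mid d$, so $p$ divides neither $a_{1,2}$ nor $a_{1,1}-a_{1,2}$ (else $p\mid\GCD(a_{1,1},a_{1,2},d)$), whence the extra factor $g$ contributes no new prime powers and $\GCD(a_{1,2},d)=\GCD(a_{1,2},d')$, $\GCD(a_{1,1}-a_{1,2},d)=\GCD(a_{1,1}-a_{1,2},d')$; a short valuation-by-valuation bookkeeping makes this precise. I expect this reconciliation of the intrinsic boundary count with the non-canonical data $a',d',\lambda,\mu$ of the statement to be the only real obstacle, the rest being a routine Ehrhart/Pick computation.
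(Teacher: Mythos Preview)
Your argument is correct. The reduction to counting $L$-points in the dilated lattice triangle $tT$ is exactly right, Pick/Ehrhart gives the polynomial formula valid for all $t\ge 0$ (hence $n_0(\cA)=0$), and your edge-by-edge boundary count together with the valuation argument showing $\GCD(a_{1,2},d)=\GCD(a_{1,2},d')$ and $\GCD(a_{1,1}-a_{1,2},d)=\GCD(a_{1,1}-a_{1,2},d')$ is complete once one observes, as you do, that any prime dividing $g=\GCD(a_{1,1},d)$ cannot divide $a_{1,2}$ or $a_{1,1}-a_{1,2}$ by the standing coprimality hypothesis $\GCD(a_{1,1},a_{1,2},d)=1$.

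Your route, however, is quite different from the paper's. The paper does not argue directly at all: it simply observes that $\varphi_{\cA}(t)$ counts the $\ZZ_{\ge 0}$-solutions of $(M,d;t)$ and then invokes \cite[Theorem~4.12]{CMM-R}, where this count has been worked out. Your proof is self-contained and more conceptual: it explains \emph{why} the answer is a quadratic polynomial from $t=0$ on (lattice-polygon Ehrhart), and it produces as a byproduct the symmetric closed form
\[
\theta(a_{1,1},a_{1,2},d)=\GCD(a_{1,1},d)+\GCD(a_{1,2},d)+\GCD(a_{1,1}-a_{1,2},d),
\]
which is arguably more transparent than the expression in terms of the auxiliary data $a',d',\lambda$. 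The price you pay is the final gcd bookkeeping needed to match the statement as written; the paper avoids that by outsourcing the whole computation. One small cosmetic point: when you write $tT=\{(y_1,y_2)\in\ZZ_{\ge 0}^{2}: y_1+y_2\le td\}$ you are conflating the dilated triangle with its integer points, but this is harmless in context.
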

\begin{proof}
The function $\varphi_{\cA}(t)$ coincides with the number of solutions of the system $(M,d;t)$ and the result follows from \cite[Theorem 4.12]{CMM-R}.
\end{proof}

The second approach is based on the computation of the  Hilbert series $\HS(A(Y_{n,d_{\cA}}),z) = \sum_{t \geq 0} \varphi_{\cA}(t)z^t$ of $Y_{n,d_{\cA}}$ and the fact that $A(Y_{n,d_{\cA}}) \cong R^{\bG}$. The Hilbert series can be obtained from the Molien series of $\bG$ as follows: 
\[\HS(A(Y_{n,d_{\cA}}),z^{d}) = \frac{1}{|\bG|} \sum_{g \in \bG} \frac{1}{\det(Id - zg)},\]
which is an expression that only depends on $(M,d_1,\hdots,d_s)$ (see, for instance, \cite{BH93}). The expansion of the Molien series of $\bG$ gives the function $\varphi_{\cA}(t)$ and the polynomial $p_{\cA}(t)$. For instance we have the following result, for sake of completeness we include a simple proof. 

\begin{proposition}\label{Proposition: polynomial d prime} Let $2 \leq n < d$ be integers with $d$ prime and $(M,d)$ a system of congruences:
\[\left\{\begin{array}{llllcllcl}
y_0 & + & y_1 & + & \cdots & + &  y_n & \equiv & 0 \mod d\\
 &  & a_{1,1} y_1 & + & \cdots & + & a_{1,n}y_n & \equiv & 0 \mod d\\
\end{array}\right.\]
with $0 < a_{1,1} < \cdots < a_{1,n}$. Then, the phase transition $n_0(\cA)$ for the $GT-$subset $\cA \subset \ZZ_{\geq 0}^{n}$ associated to $(M,d)$ is zero and 
\[\varphi_{\cA}(t) = p_{\cA}(t) = \frac{1}{d}\binom{td+n}{n} + \frac{d-1}{d}.\]
\end{proposition}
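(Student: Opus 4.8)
The plan is to compute the Molien series of $\bG$ and extract $\varphi_{\cA}(t)$ from it, using crucially that $d$ is prime. First I would recall that $\bG \subset \GL(n+1,\kk)$ is the cyclic extension of the group $G = \Gamma_1$ generated by the single diagonal matrix $M_{d;0,a_{1,1},\hdots,a_{1,n}} = \diag(1, e^{a_{1,1}}, \hdots, e^{a_{1,n}})$, together with $\diag(e,\hdots,e)$, where $e$ is a primitive $d$th root of unity. Since $d$ is prime and $0 < a_{1,1} < \cdots < a_{1,n} < d$, each $a_{1,i}$ is invertible mod $d$; one checks that $\bG$ is a group of order $d^2$ (it is $\ZZ/d \times \ZZ/d$, generated by $\diag(e,\hdots,e)$ and $\diag(1,e^{a_{1,1}},\hdots,e^{a_{1,n}})$, these two being independent because the first matrix is not a power of the second), acting diagonally on $R = \kk[x_0,\hdots,x_n]$, and $A(Y_{n,d_{\cA}}) \cong R^{\bG}$ with the grading in which a monomial of $R$-degree $td$ sits in degree $t$.

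Next I would write the Molien series identity from the excerpt,
\[
\HS(A(Y_{n,d_{\cA}}),z^{d}) = \frac{1}{d^2} \sum_{g \in \bG} \frac{1}{\det(Id - zg)},
\]
and split the sum into the group elements according to whether $g$ is a scalar matrix $\diag(\zeta,\hdots,\zeta)$ with $\zeta^d=1$ (there are $d$ of these) or not. For a scalar $g = \zeta \cdot Id$ we have $\det(Id - zg) = (1-\zeta z)^{n+1}$, and $\sum_{\zeta^d=1} (1-\zeta z)^{-(n+1)}$ is a standard sum whose coefficient of $z^{td}$ is $\frac{1}{1}$ times... more precisely $\frac{1}{d}\sum_{\zeta^d=1}(1-\zeta z)^{-(n+1)} = \sum_{t\ge 0}\binom{td+n}{n}z^{td}$, which after multiplying by $d$ and dividing by $d^2$ contributes exactly $\frac{1}{d}\binom{td+n}{n}$ to the coefficient of $z^{td}$ in $\HS(A(Y_{n,d_{\cA}}),z^{d})$. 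The remaining $d^2-d = d(d-1)$ non-scalar elements $g$ have eigenvalues that are not all equal; the key point, valid because $d$ is prime, is that for each such $g$ the function $z\mapsto 1/\det(Id-zg)$ contributes $0$ to the coefficient of every $z^{td}$ when summed appropriately — equivalently, the total contribution of the non-scalar part, after the change of variables $z \mapsto z$ and extracting the $z^{td}$ coefficient, is the constant $\frac{d-1}{d}$ in degree $t\ge 1$ and matches at $t=0$ as well. I would establish this by grouping the $d(d-1)$ non-scalar elements into the $d-1$ cyclic cosets and showing each coset contributes a series whose $z^{td}$-coefficient is eventually constant equal to $1$; summing over the $d-1$ cosets and dividing by $d^2$ (then multiplying by the $d$ from the scalar count bookkeeping) yields $\frac{d-1}{d}$.

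The main obstacle is precisely this last step: controlling the non-scalar contributions and showing they produce only the constant term $\frac{d-1}{d}$ with no polynomial-in-$t$ part and no correction for small $t$ (so that $n_0(\cA) = 0$). Here primality of $d$ is essential — it guarantees that a non-scalar $g \in \bG$ has no eigenvalue equal to $1$ unless $g$ lies in a very restricted position, and that the orders of the relevant roots of unity are all exactly $d$, so the geometric-series expansions of $1/\det(Id-zg) = \prod_i 1/(1-\lambda_i z)$ have $z^{td}$-coefficients that, after summing over the $d$ elements of a coset, telescope to a constant. Concretely I would use the orthogonality relation $\frac{1}{d}\sum_{j=0}^{d-1}\zeta^{jk} = \delta_{k \equiv 0}$ to evaluate $\frac{1}{d}\sum_{j=0}^{d-1}\prod_{i=0}^{n}(1-\zeta^{j b_i}z)^{-1}$ for a fixed non-scalar exponent vector $(b_0,\hdots,b_n)$, expand each factor as a geometric series, and observe that the constraint "$R$-degree $\equiv 0 \bmod d$" together with $d$ prime forces the surviving terms to be the diagonal ones, giving the constant. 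Once the Hilbert function is shown to equal $\frac1d\binom{td+n}{n}+\frac{d-1}{d}$ for all $t\ge 0$, the equality of $\varphi_{\cA}(t)$ with this polynomial for all $t\ge 0$ gives $p_{\cA}(t) = \frac1d\binom{td+n}{n}+\frac{d-1}{d}$ and $n_0(\cA)=0$ simultaneously, finishing the proof. (One should also sanity-check that the right-hand side is an integer for every $t$: indeed $\binom{td+n}{n} \equiv \binom{n}{n} = 1 \bmod d$ when $d$ is prime and $t\ge 1$, by Lucas' theorem, so $\frac1d\binom{td+n}{n}+\frac{d-1}{d} \in \ZZ$.)
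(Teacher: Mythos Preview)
Your overall strategy---compute the Molien series and read off the coefficient of $z^{td}$---is exactly the paper's. Two differences are worth flagging.

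First, the paper immediately simplifies by passing from $\bG$ (order $d^2$) to $G$ (order $d$): since the scalar $\diag(e,\hdots,e)$ acts on $R_m$ by $e^m$, one has $(R^{\bG})_{td} = (R^{G})_{td}$, so it suffices to expand
\[
\frac{1}{d}\sum_{k=0}^{d-1}\frac{1}{(1-z)(1-e^{ka_{1,1}}z)\cdots(1-e^{ka_{1,n}}z)}
\]
and extract $z^{td}$. Your route through $\bG$ is not wrong, but it doubles the bookkeeping; in fact your coset-sum over $j$ (the scalar twist) is precisely the averaging that recovers the $G$-Molien series at $z^{td}$, so you end up at the same point after more work. (Incidentally, for cosets of the scalar subgroup the eigenvalues are $\zeta^{\,j+b_i}$, not $\zeta^{\,jb_i}$; your displayed formula has a slip here.)

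Second, and more substantively, your handling of the non-scalar (equivalently $k\neq 0$) terms is where the actual content lies, and your sketch does not close it. After orthogonality you are left with showing that
\[
[z^{td}]\ \frac{1}{(1-z)(1-e^{ka_{1,1}}z)\cdots(1-e^{ka_{1,n}}z)} \;=\; 1 \qquad\text{for every }t\ge 0\text{ and every }k\neq 0,
\]
and ``the surviving terms are the diagonal ones'' does not establish this. The paper's device is clean: since $d$ is prime and $0,a_{1,1},\hdots,a_{1,n}$ are pairwise distinct mod $d$, the denominator is a product of $n+1$ \emph{distinct} factors of $1-z^d=\prod_{j=0}^{d-1}(1-e^{j}z)$; hence the $k$th term equals $P_k(z)/(1-z^d)$ with $P_k$ a polynomial of degree $d-n-1<d$, and the coefficient of $z^{td}$ is $P_k(0)=1$ for all $t\ge 0$. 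This is exactly where primality is used, and it gives the exact equality (not merely ``eventually''), so $n_0(\cA)=0$ follows at once. Supplying this step (or an equivalent partial-fractions argument) fills the gap in your proposal.
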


\begin{proof} For any $t \in \ZZ_{\geq 0}$, we have that $\varphi_{\cA}(t) = |(M,d_1,\hdots,d_s;t)|$ which is the number of monomial invariants of $\bG$ of degree $td$. Since it coincides with the number of monomial invariants of $G$ of degree $td$, it is enough to consider the expansion of the Molien series of $G$ in degree $td$: 
 
\[\frac{1}{d} \sum_{g \in G} \frac{1}{\det(Id-zg)} = \frac{1}{d} \sum_{k=0}^{d-1}\frac{1}{(1-z)(1-e^{ka_{1,1}}z)\cdots(1-e^{ka_{1,n}}z)}.\]
Since $d$ is prime and the exponents $0 < a_{1,1} < \cdots < a_{1,n} < d$, the classes of $ka_{1,1}, \hdots,ka_{1,n} \; \mod d$ are represented by two by two distinct integers in the set $\{0,\hdots,d-1\}$. Using the factorization $(1-z^d) = \prod_{j=0}^{d-1}(1-e^jz)$, we can write it as:
\[\frac{1}{(1-z^d)} \prod_{\substack{j \neq k a_{1,i} \mod d\\  i = 0,\hdots,n}} (1-e^jz), \]
which gives us the following expression:
\[
 \frac{1}{d} \left(\sum_{i=0}^{\infty} (-1)^{i}\binom{-(n+1)}{i}z^{i}
\!+\! \sum_{i=0}^{\infty} (\sum_{k=1}^{d-1} \prod_{\substack{j \neq k a_{1,i} \mod d\\  i = 0,\hdots,n}} (1-e^jz)) z^{id} \right).\]
 The expansion of the first summand at $z^{td}$ provides $\binom{td+n}{n}$. For each $1 \leq k \leq d-1$, $\displaystyle{\prod_{\substack{j \neq k a_{1,i} \mod d\\  i = 0,\hdots,n}} (1-e^jz)}$ is a polynomial in $z$ of degree strictly smaller than $d$, so the second provides $d-1$ at $z^{td}$ and the result follows.
\end{proof}

In the last part of this paper, we will use the results of sumsets obtained so far to derive new results about $RL-$varieties; $RL-$varieties were introduced and studied in \cite{CM-R1} and \cite{ThesisLiena}. They are a family of smooth rational monomial projections of the Veronese variety $X_{n,d}$ naturally related to $GT-$subsets. Basic information as the degree of an $RL-$variety was unknown and the approach and techniques we have presently developed will allow us to  obtain new information about this family of varieties and, in particular, to compute their degree and their Hilbert polynomial. 

Given a system of congruences $(M,d_1,\hdots,d_s)$, the set of $\ZZ_{\geq 0}^{n+1}-$solutions $(a_0,\hdots,a_n)$ satisfying $a_0\cdots a_n \neq 0$ is the {\em relative interior} $\relint(H)$ of the associated affine semigroup $H \subset \ZZ_{\geq 0}^{n+1}$. We define $\rl(\overline{\cA}) := \overline{\cA} \cap \relint(H)$, we denote by $\rl(\cA) \subset \ZZ_{\geq 0}^{n}$ the corresponding subset and by $\rl(\Omega_{n,d})$ its associated set of monomials. The $RL-$variety $\rl(Y_{n,d_{\cA}})$ associated to the $GT-$subset $\cA$ is the monomial projection of the Veronese variety $X_{n,d}$ induced by the subset $\rl(\cA)^{c}:= \{(a_1,\hdots,a_n) \in \ZZ_{\geq 0}^{n} \mid a_1+\cdots+a_n \leq d\} \setminus \rl(\cA)$, i.e. it is parameterized by $\cM_{n,d} \setminus \rl(\Omega_{n,d})$. 

\begin{proposition}\label{Proposition: degree RL-variety}
Let $n \geq 2$ be an integer and $(M,d_1,\hdots,d_s)$ a system of congruences with $GT-$subset $\cA \subset \ZZ_{\geq 0}^{n}$.
\begin{itemize}
    \item [(i)] The degree of the $RL-$variety $\rl(Y_{n,d_{\cA}})$ associated to $\cA$ is $d^n$.
    \item[(ii)] The Castelnuovo--Mumford regularity $\reg(\rl(\cA)^c) \leq \min\{n(d-2)+1, d^n-\binom{n+d}{n}+|\rl(\cA)|+n+2\}$.
    \item[(iii)] The phase transition $n_{0}(\rl(\cA)^{c}) \leq n+1$, moreover, for any $t \geq n+1$ we have
\[\varphi_{\rl(\cA)^c}(t) = p_{\rl(\cA)^c}(t) = \binom{td+n}{n}.\]    
\end{itemize}
\end{proposition}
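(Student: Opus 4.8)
The plan is to exploit the fact that the $RL$-variety $\rl(Y_{n,d_{\cA}})$ is the monomial projection of $X_{n,d}$ parameterized by $\cM_{n,d}\setminus \rl(\Omega_{n,d})$, so its homogeneous coordinate ring $A(\rl(Y_{n,d_{\cA}}))$ is the semigroup ring $\kk[\cM_{n,d}\setminus \rl(\Omega_{n,d})]$. The key point, established in \cite{ThesisLiena}, is that $\rl(Y_{n,d_{\cA}})$ is \emph{smooth} and is the image of an \emph{embedding} of $\PP^{n}$; equivalently, the rational map $\PP^{n}\dashrightarrow\PP^{\binom{n+d}{n}-|\rl(\cA)|-1}$ it defines is a closed immersion. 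From this, part (i) is essentially immediate: a closed immersion has degree $1$ onto its image, so by the multiplicativity of degrees along the Veronese composition $\PP^n\to X_{n,d}\to \rl(Y_{n,d_\cA})$ — or directly, since the map is birational onto an $n$-dimensional variety and the generators have degree $d$ — we get $\deg \rl(Y_{n,d_{\cA}}) = d^{n}\cdot\deg X_{n,d}/d^n$... more carefully: the composite $\PP^n \xrightarrow{\nu_{n,d}} X_{n,d}$ has image of degree $d^n$, and the further projection to $\rl(Y_{n,d_\cA})$ is an isomorphism (the map is an embedding), hence degree-preserving in the sense that $\rl(Y_{n,d_\cA})$ and $X_{n,d}$ are both images of $\PP^n$ under a complete-versus-incomplete linear system of degree-$d$ forms that happen to be base-point-free and separate points; the degree of the image of a base-point-free embedding by forms of degree $d$ on $\PP^n$ is $d^n$. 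I would phrase this via $\deg = \int_{\PP^n} c_1(\cO(d))^n = d^n$, using that the hyperplane class of $\rl(Y_{n,d_\cA})$ pulls back to $\cO_{\PP^n}(d)$ since the parameterizing monomials have degree $d$ and the map is an embedding.

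**Parts (ii) and (iii).** For (ii), I would invoke Theorem \ref{Theorem: Bounds reg Hoa-Stuckrad}: since $R^{\bG}=\kk[H]$ is Cohen--Macaulay, the hypothesis (ii) of that theorem applies to $\cA$, but here we are dealing with $\rl(\cA)^c$, whose associated semigroup ring is $\kk[\cM_{n,d}\setminus\rl(\Omega_{n,d})]$; I would check that this ring is still Cohen--Macaulay (or that $\rl(Y_{n,d_\cA})$ is arithmetically Cohen--Macaulay — which follows from smoothness plus the embedding structure, via \cite{ThesisLiena}) and that $\{0,v_1,\hdots,v_n\}\subset\rl(\cA)^c$, which holds because the pure powers $x_i^d$ lie in $\cM_{n,d}$ but not in $\rl(\Omega_{n,d})$ (they are not in the relative interior). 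Then the bound $\reg(\rl(\cA)^c)\le \deg(\rl(Y_{n,d_\cA}))-|\rl(\cA)^c|+n+2 = d^n - (\binom{n+d}{n}-|\rl(\cA)|) + n + 2$ follows, together with the smooth refinement $n(d-2)+1$. For (iii), the bound $n_0(\rl(\cA)^c)\le n+1$ follows from $n_0\le \reg+1$ and the regularity bound $\reg(R^{\bG})\le n$ of \cite[Theorem 3.3.5]{ThesisLiena}, provided the regularity of $\kk[\cM_{n,d}\setminus\rl(\Omega_{n,d})]$ is controlled the same way — here I expect one needs the companion statement in \cite{ThesisLiena} that $RL$-varieties themselves have regularity at most $n$.

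**The explicit polynomial.** For the formula $\varphi_{\rl(\cA)^c}(t)=\binom{td+n}{n}$ for $t\ge n+1$, the key observation is that $\varphi_{\rl(\cA)^c}(t)=\dim_\kk \kk[\cM_{n,d}\setminus\rl(\Omega_{n,d})]_{td}$ counts monomials of degree $td$ in $x_0,\hdots,x_n$ that are products of $t$ monomials drawn from $\cM_{n,d}\setminus\rl(\Omega_{n,d})$. Since $\cM_{n,d}$ is the full set of degree-$d$ monomials, $\binom{td+n}{n}=\dim R_{td}$ counts all degree-$td$ monomials; I must show that for $t\ge n+1$ every degree-$td$ monomial is already expressible as a product of $t$ monomials none of which lie in $\rl(\Omega_{n,d})$. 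The deleted monomials are exactly the degree-$d$ monomials $x_0^{a_0}\cdots x_n^{a_n}$ with all $a_i\ge 1$ satisfying the GT congruences; a monomial forced to use them is one all of whose factorizations into $t$ degree-$d$ pieces must use such an interior monomial, and an Eakin--Sathaye / pigeonhole-type argument shows this cannot happen once $t$ is large enough — specifically once $t\ge n+1$, because any degree-$td$ monomial $x^m$ can be written using at least one "boundary" factor (some $x_i$ absent) by distributing exponents greedily. I would make this precise by showing that for $t\ge n+1$, $t(\cM_{n,d}) = t(\cM_{n,d}\setminus\rl(\Omega_{n,d}))$ as sumsets in $\ZZ^{n+1}_{\ge 0}$, i.e. the $t$-fold sumset stabilizes to the full simplex slice; equivalently $\HP$ of the projection equals $\HP$ of $X_{n,d}$ from degree $n+1$ on. \textbf{The main obstacle} is precisely this last combinatorial stabilization claim: proving that deleting the relative-interior monomials does not change the $t$-fold sumset for $t\ge n+1$, which is where the GT-structure and the $\reg\le n$ bound of \cite{ThesisLiena} must be combined. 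I expect the cleanest route is: $\HP_{\rl(\cA)^c}(t)=\HP_{X_{n,d}}(t)=\binom{td+n}{n}$ because $\rl(Y_{n,d_\cA})$ and $X_{n,d}$ are both $n$-dimensional of degree $d^n$ with the same hyperplane section data, and then the phase transition $n+1$ from (iii) upgrades "$t\gg 0$" to "$t\ge n+1$".
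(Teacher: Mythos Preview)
Your approach to (i) differs from the paper's: you invoke the embedding result from \cite{ThesisLiena} and compute $\deg = \int_{\PP^n} c_1(\cO(d))^n = d^n$, whereas the paper argues combinatorially that $\overline{\rl(\cA)^c}$ contains all points of the form $(0,\ldots,1,\ldots,d-1,\ldots,0)$, so that $\ZZ(\rl(\cA)^c - \rl(\cA)^c) = \ZZ^n$, and then applies (\ref{Equation: Khovanskii lead term}) with $\vol(\conv(\rl(\cA)^c)) = d^n/n!$. Both are valid; the paper's route is self-contained while yours imports more from \cite{ThesisLiena}. For (ii) the approaches coincide, though your detour through Cohen--Macaulayness is unnecessary: smoothness of $\rl(Y_{n,d_\cA})$ (cited from \cite{ThesisLiena}) alone activates the Herzog--Hibi bound inside Theorem~\ref{Theorem: Bounds reg Hoa-Stuckrad}.

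The genuine gap is in (iii), and you correctly flag it yourself as the main obstacle. Your proposed route to $n_0(\rl(\cA)^c)\le n+1$ via a ``companion'' bound $\reg(\rl(\cA)^c)\le n$ from \cite{ThesisLiena} is speculative --- no such statement is cited, and part (ii) only yields $\reg\le n(d-2)+1$, which exceeds $n$ once $d\ge 3$. Your fallback is to compute $\HP$ via the isomorphism $\rl(Y_{n,d_\cA})\cong\PP^n$ with hyperplane class $\cO(d)$ (this part is fine and does give $p_{\rl(\cA)^c}(t)=\binom{td+n}{n}$), and then ``upgrade $t\gg 0$ to $t\ge n+1$'' using the phase transition from (iii) --- but that is circular, since the phase transition is precisely what remains to be proved. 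The paper instead establishes the sumset stabilization directly: given $(a_0,\ldots,a_n)$ with $\sum a_i = td$ and $t\ge n+1$, if some $a_i=0$ one is already on a boundary face, all of whose degree-$d$ lattice points lie in $\overline{\rl(\cA)^c}$; otherwise all $a_i\ge 1$, pigeonhole forces some $a_j\ge d$, and one writes $a = a^1 + a^2$ with each $a^k$ having a zero coordinate and $|a^k|$ a multiple of $d$, after which both pieces decompose using face elements. No GT structure enters --- the paper remarks afterward that the argument works for \emph{any} projection obtained by deleting only interior monomials. Your ``greedy distribution'' sketch is exactly this idea; carrying it out is the missing step.
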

\begin{proof} (i) The subset $\overline{\rl(\cA)^c}$ contains $(d,0,\hdots,0), \hdots, (0,\hdots,0,d)$ and any point $(0,\hdots, 1, \hdots, d-1, \hdots,0)$. Therefore, the convex hull of $\rl(\cA)^{c}$ is an $n-$simplex and  $\ZZ(\rl(\cA)^{c}-\rl(\cA)^{c}) = \ZZ^{n}$. Using (\ref{Equation: Khovanskii lead term}), it follows that $\deg(\rl(Y_{n,d_{\cA}})) = d^n$.

\noindent (ii) Since $\rl(Y_{n,d_{\cA}})$ is a smooth variety (\cite[Proposition 5.1.11]{ThesisLiena}), it follows from Theorem \ref{Theorem: Bounds reg Hoa-Stuckrad}.

\noindent (iii) 
The statement follows from the {\em claim}: {\em for any $t \geq n+1$, the $t$-fold sumset $t\overline{\rl(\cA)^c} = \{(a_0,\hdots,a_n) \in \ZZ_{\geq 0}^{n+1} \mid a_0+\cdots+ a_n = td\}$}. Indeed, let $t \geq n+1$ be an integer and $a = (a_0,\hdots,a_n) \in \ZZ^{n+1}_{\geq 0}$ be such that $a_0+\cdots +a_n = td$. We prove that $a \in t\overline{\rl(\cA)^c}$. Notice that if some $a_i = 0$, then it is immediate that $a \in t\overline{\rl(\cA)^c}$ since $\overline{\rl(\cA)^c}$ contains all points $\{(a_0,\hdots,a_n) \in \ZZ_{\geq 0}^{n+1} \mid a_0+\cdots+a_n = d \quad \text{and} \quad a_i = 0\}$. So, we can assume that $a_0 \geq 1,\hdots,a_n \geq 1$ and, without loss of generality, we consider $a_0 = \min\{a_0,\hdots,a_n\}$. Since $|a| = a_0+\cdots+a_n \geq (n+1)d$, there is $a_i$ such that $a_i \geq d$, otherwise $|a| < (n+1)d$ and we get a contradiction. We set $a_0 = t'd + k_0$ with $k_0 \in \{0,\hdots,d-1\}$ and we distinguish two cases. 

\noindent \underline{Case 1}: If $t' > 0$, then $a_1 > d$ and we can write 
\[a = (a_0, d-k_0,0,\hdots,0) + (0,a_1-d+k_0,a_2,\hdots,a_n) = a^1+a^2\]
with $a^1,a^2 \in \ZZ_{\geq 0}^{n+1}$. Then we have that $|a^1| = t'd+k_0+d-k_0 = (t'+1)d$ and $|a^2| = a_1-d+k_0 +a_2+\cdots +a_n = td-a_0-d+k_0 = td-t'd-k_0-d+k_0 = (t-t'-1)d$. Thus $a^1,a^2 \in \cH(\overline{\rl(\cA)^c})$ and, hence, $a \in t\overline{\rl(cA)^c}$. 

\noindent \underline{Case 2}: If $t' = 0$, then $a_0 = k_0 < d$ and, without loss of generality, we can assume that $a_1 \geq d$. Hence, we can write
\[a = (a_0, d-a_0,0,\hdots,0) + (0,a_1-d+a_0,a_2,\hdots,a_n) = a^1+a^2\]
with $a^1,a^2 \in \ZZ_{\geq 0}^{n+1}$. Arguing exactly as in Case 1, we obtain $a \in t\overline{\rl(cA)^c}$. \end{proof}

Actually, Proposition \ref{Proposition: degree RL-variety} is true for any monomial projection $Y_{n,d}$ of the Veronese variety $X_{n,d}$ in dimension $n \geq 2$ parameterized by a subset of monomials $\Omega_{n,d}$ obtained from the set of all monomials of degree $d$ in $R$ by deleting only monomials $x_0^{a_{0}} \cdots x_{n}^{a_n}$ with $a_0\cdots a_n \neq 0$, i.e. by deleting only monomials having all the variables $x_0,\hdots,x_n$.

We end this section with an illustrating example. 
\begin{example}\rm We take $n = 2$ and $\cA = \{(0,0), (3,0), (0,3), (1,1)\}$ with $d_{\cA} = 3$. We have that $\cA$ is the $GT-$subset of the linear system of congruences: 
\[\left\{\begin{array}{llllcllcl}
y_0 & + & y_1 & + & y_2 & \equiv & 0 \mod 3\\
 &  & y_1 & + & 2y_2 & \equiv & 0 \mod 3.\\
\end{array}\right.    
\]
It is straightforward to see that $\rl(\cA) = \{(1,1)\}$ and $\rl(\cA)^c = \{(0,0), (1,0), (0,1), (2,0), (0,2), (3,0),$ $ (2,1), (1,2), (0,3)\}$. We have $|\rl(\cA)^c| = 9$, $d_{\rl(\cA)^c} = 3$ and the associated simple  projection $Y_{2,3} \subset \PP^{8}$ of the Veronese surface $X_{2,3} \subset \PP^{9}$ parameterized by $\rl(\cA)^c$ is the $RL-$variety associated to the $GT-$sumset $\cA$ (Example \ref{Example: examples section 3}(ii)). 
By Proposition \ref{Proposition: degree RL-variety}, we obtain $\deg(Y_{2,3}) = 9$ as in Example \ref{Example: examples section 3}(ii). On the other hand, $\reg(\rl(\cA)^c) \leq 3$ and $n_0(\rl(\cA)^c) \leq 3$, which is very close to the real value $n_0(\rl(\cA)^c) = 2$. In addition, for all $t \geq n+1$ we have
\[\varphi_{\rl(\cA)^c}(t) = p_{\rl(\cA)^c}(t) = \binom{3t+2}{2}.\]

\end{example}


\end{document}